\newcommand{\showcomments}{yes}
\renewcommand{\showcomments}{no}
\newsavebox{\commentbox}
\title[Tetrahedra of flags, volume and homology of $\SL(3)$]{Tetrahedra of flags, volume \\ and homology of $\SL(3)$}
\author{Nicolas Bergeron, Elisha Falbel, and Antonin Guilloux} 
\thanks{N.B. is a member of the Institut Universitaire de France.}
\address{Institut de Math\'ematiques de Jussieu \\
Unit\'e Mixte de Recherche 7586 du CNRS \\
Universit\'e Pierre et Marie Curie \\
4, place Jussieu 75252 Paris Cedex 05, France \\}
\email{bergeron@math.jussieu.fr \\ falbel@math.jussieu.fr \\ aguillou@math.jussieu.fr}
\urladdr{http://people.math.jussieu.fr/~bergeron \\ http://people.math.jussieu.fr/~falbel \\ http://people.math.jussieu.fr/~aguillou}
 \DeclareFontFamily{OT1}{rsfs}{}
\DeclareFontShape{OT1}{rsfs}{n}{it}{<-> rsfs10}{}
\DeclareMathAlphabet{\mathscr}{OT1}{rsfs}{n}{it}
\newcommand{\Z}{\mathbb{Z}}
\newcommand{\m}{\mathfrak{m}}
\DeclareFontFamily{OT1}{rsfs}{}
\DeclareFontShape{OT1}{rsfs}{n}{it}{<-> rsfs10}{}
\DeclareMathAlphabet{\mathscr}{OT1}{rsfs}{n}{it}
\newcommand{\Q}{\mathbb{Q}}
\newcommand{\R}{\mathbb{R}}
\newtheorem{theorem}[subsection]{Theorem}
\newtheorem{lemma}[subsection]{Lemma}
\newtheorem{proposition}[subsection]{Proposition}
\newtheorem{cor}[subsection]{Corollary}
\newtheorem*{theorem*}{Theorem}
\newtheorem*{appl*}{Application}
\theoremstyle{definition}
\theoremstyle{remark}
\newtheorem*{remark}{Remark}
\newcommand\bZ{\mathbb{Z}}
\newcommand\bC{\mathbb{C}}
\newcommand\bR{\mathbb{R}}
\newcommand\bS{\mathbb{S}}
\newcommand\SL{\textrm{SL}}
\newcommand\PGL{\textrm{PGL}}
\newcommand\PU{\textrm{PU}}
\newcommand\Fl{\mathcal{F}l}
\newcommand\AFl{\mathcal{AF}l}
\renewcommand\P{\mathbb{P}}
\newcommand\C{\mathcal{C}}
\newcommand\pB{\mathcal{P}}
\newcommand\B{\mathcal{B}}
\newcommand\Vol{\textrm{Vol}}
\numberwithin{equation}{subsection}
\newcommand{\A}{\mathbb A}
\def\adots{\mathinner{\mkern2mu\raise1pt\hbox{.}
\mkern3mu\raise4pt\hbox{.}\mkern1mu\raise7pt\hbox{.}}}
\begin{document}

\begin{abstract}  
In the paper we define a ``volume'' for simplicial complexes of flag tetrahedra. This generalizes and unifies the classical volume of hyperbolic manifolds and the volume of CR tetrahedra complexes considered in \cite{falbel,FalbelWang}. We describe when this volume belongs to the Bloch group and more generally describe a variation formula in terms of boundary data. In doing so, we recover and generalize results of Neumann-Zagier \cite{NeumanZagier}, Neumann \cite{Neuman}, and
Kabaya \cite{Kabaya}. Our approach is very related to the work of Fock and Goncharov \cite{FG,FG2}.
\end{abstract}
\maketitle
\tableofcontents

\section{Introduction}

It follows from Mostow's rigidity theorem that the volume of a complete hyperbolic 
manifold is a topological invariant.  In fact, 
it coincides with Gromov's purely topological definition of simplicial volume. 
If the complete hyperbolic manifold $M$ has cusps,
Thurston showed that one could obtain complete hyperbolic structures on manifolds obtained from $M$ by Dehn surgery 
by gluing a solid torus with a sufficiently long geodesic.  
Thurston's framed his results for more general deformations
which are not complete hyperbolic manifolds, the volume of the deformation being the volume of its metric completion. 
 Neumann and Zagier \cite{NeumanZagier} and afterwards Neumann \cite{Neuman} provided a deeper analysis of these deformations and their volume.  
In particular, they showed that the variation of the volume
depends only on the geometry of the boundary and they  gave a precise formula 
for that variation in terms of the boundary holonomy.
  
It is natural to consider an invariant associated to a hyperbolic structure defined on 
the pre-Bloch group $\mathcal {P}(k)$ which is defined
as the abelian group generated by all the points in $k\setminus\{0,1\}$ where $k$ 
is a field 
quotiented by the 5-term relations (see section \ref{bloch} for definitions and references).   The volume 
function is well defined as a map $\mathrm{Vol} : \mathcal {P}(k)\rightarrow \R$ using the dilogarithm. 
The Bloch group $\mathcal {B}(k)$
is a subgroup of the pre-Bloch group $\mathcal {P}(k)$.  It is defined as the kernel of the
map
$$
\delta : \mathcal {P}(k)\rightarrow k^{\times}\wedge_\Z k^{\times}
$$
given by $\delta([z])=z\wedge (1-z)$. 
The
volume and the Chern-Simons invariant 
can then be seen through a function (the Bloch regulator)
$$
 \mathcal {B}(k)\rightarrow {\bC}/{\Q}.
$$
The imaginary part being related to the volume and the real part related to Chern-Simons 
$CS$ mod $\Q$ invariant.

Several extensions of Neumann-Zagier results were obtained.  Kabaya \cite{Kabaya} defined
an invariant in $\mathcal {P}(\bC)$ associated to a hyperbolic 3-manifold with 
boundary and obtained a 
description of the variation of the volume function which depends only on the boundary 
data.  Using different coordinates and methods Bonahon \cite{Bonahon2}  showed a similar formula.

The volume function was extended in \cite{falbel,FalbelWang} in order to deal with Cauchy-Riemann (CR) structures.
  More precisely,
consider $\bS^3\subset \bC^2$ with the contact structure obtained as the intersection $D=TS^3\cap JTS^3$ where $J$ is the
multiplication by $i$ in $\bC^2$.
The operator $J$ restricted to $D$ defines the standard CR structure on $\bS^3$.
The group of CR-automorphisms of $\bS^3$ is $\PU(2,1)$ and we say
that a manifold $M$ has a spherical CR structure if it has a $(\bS^3, \PU(2,1))$-
geometric structure.  Associated to a CR triangulation it was defined in \cite{FalbelWang} an
 invariant in $\mathcal {P}(\bC)$ which is in the Bloch group in case
 the structure has unipotent boundary holonomy.  The definition of that invariant is valid for ``cross-ratio structures'' (which includes hyperbolic and CR structures) as defined in \cite{falbel}.  It turns out to be a coordinate description of the decorated triangulations described bellow and the invariant in  $\mathcal {P}(k)$ coincides with the one defined before up to a multiple of four.

We consider in this paper a geometric framework
 which includes both hyperbolic structures
and CR structures on manifolds. We are in fact dealing with representations of the fundamental group of the manifold in $\PGL(3, \bC)$ that are parabolic : the peripheral holonomy should preserve a flag in $\bC^3$. Recall that a flag in $\bC^3$ is a line in a plane of $\bC^3$. The consideration of these representations links us to the work of Fock and Goncharov \cite{FG,FG2}. Indeed we make an intensive use of their combinatorics on the space of representations of surface groups in $\SL(3 , \R)$.

As in the original work of Thurston and Neumann-Zagier, we work with decorated triangulations. 
Namely, let $T$ be a triangulation of a 3-manifold $M$.
To each tetrahedron we associate a quadruple of flags 
(corresponding to the four vertices) in $\bC^3$. In the case 
of ideal triangulations, where the
manifold $M$ is obtained from the triangulation by deleting the vertices, 
we impose that the holonomy
around each vertex preserves the flag decorating this vertex. Such a decorated triangulation gives a set of flag coordinates, and more precisely two sets: affine flag coordinates $a$ and projective flag coordinates $z$. Those are, in the Fock and Goncharov setting, the $a$- and $z$-coordinates on the boundary of each tetrahedron, namely a  four-holed sphere. 

The main result in this paper is the construction of an element  $\beta \in \mathcal {P}(\bC)$ associated to a decorated triangulation and a description of a precise formula for $\delta (\beta)$ in terms of boundary data. This formula is given in theorem \ref{thm:kabaya}.

The core of the proof of theorem \ref{thm:kabaya} goes along the same lines of the homological proof of Neumann \cite{Neuman}. We nevertheless believe that the use  of the combinatorics of Fock and Goncharov  sheds some light on Neumann's work. The two theories fit well together, allowing a new understanding, in particular, of the ``Neumann-Zagier'' symplectic form. 

The organisation of the paper is as follows. In  section \ref{configurations} we describe flags and configurations of flags.
Following \cite{FG}, we define $a$- and $z$-coordinates for configurations of flags.  These data 
define a decorated tetrahedron. In section \ref{bloch} we define 
an element in the pre-Bloch group associated to a decorated 
tetrahedron (cf. also \cite{FalbelWang}). We then define the  
volume of a decorated tetrahedron and show how 
previous definitions in hyperbolic and CR geometry are included in this context.  
We moreover relate our work to Suslin's work on $K_3$, showing that our volume map is essentially Suslin map from $H_3(\SL(3))$ to the Bloch group. This gives a geometric and intuitive construction of the latter. Here we are very close to the work of Zickert on the extended Bloch group \cite{Zickert}.
In the next section \ref{decoration} we associate to a decorated tetrahedron $T$ the element 
$\delta (\beta(T))$ and compute it using both $a$-coodinates and $z$-coordinates.

This local work being done, we move on in section \ref{decoration2} to the framework of decorated simplicial complexes.
The decoration consists of $a$-coordinates or $z$-coordinates associated to each tetrahedron 
and satisfying appropriate compatibility conditions along edges and faces. 
The main result is the computation of
 $W=\delta(\beta(M))$  which turns out to depend only on boundary data (Theorem \ref{thm:kabaya}).
 
We first give a proof of Theorem \ref{thm:kabaya} when the decoration is unipotent. We then deal with the proof of the general case. In doing
so we have to develop a generalization of the Neumann-Zagier bilinear relations to the $\PGL (3, \bC)$ case. In doing so the Goldman-Weil-Petersson
form for tori naturally arises. We hope that our proof sheds some light on the classical $\PGL (2, \bC)$ case. 

In section \ref{examples} we describe all
unipotent decorations on the complement of 
the figure eight knot. It was proven by P.-V. Koseleff that there are a finite number of 
unipotent structures and all of them are either hyperbolic  
or CR. The natural question of the rigidity of unitotent representation will be investigated in a forthcoming paper \cite{BFG-rigidity} (see also \cite{Genzmer}).

Finally in section \ref{applications}, we describe applications of theorem \ref{thm:kabaya}. First, we follow again Neumann-Zagier and obtain an explicit formula for the variation of the 
volume function which depends on boundary data. Then, relying on remarks of Fock and Goncharov, we describe a $2$-form on the space of representations of the boundary of our variety which coincides with Weil-Petersson form in some cases (namely for hyperbolic structures and unipotent decorations).

We thank J. Genzmer, P.-V. Koseleff and Q. Wang for fruitful discussions.

\section{Configurations of flags and cross-ratios}\label{configurations}

We consider in this section the flag variety $\Fl$ and the affine flag variety $\AFl$ of $\SL(3)$ over a field $k$. We define coordinates on the configurations of $4$ flags (or affine flags), very similar to the coordinates used by Fock and Goncharov \cite{FG}.

\subsection{Flags, affine flags and their spaces of configuration}

We set up here notations for our objects of interest. 
Let $k$ be a field and $V=k^3$. A flag in $V$ is usually seen as a line and a plane, the line belonging to the plane. We give, for commodity reasons, the following alternative description using the dual vector space $V^*$ and the projective spaces $\P(V)$ and $\P(V^*)$:

We define the spaces of {\it affine flags} $\AFl(k)$ and {\it flags} $\Fl(k)$ by the following:
\begin{eqnarray*}
 \AFl(k) &= & \{(x,f)\in (V\setminus\{0\})\times (V^*\setminus\{0\}) \textrm{ such that } f(x)=0\} \\
 \Fl(k) & = & \{([x],[f])\in \P(V)\times \P(V^*) \textrm{ such that } f(x)=0\}.
\end{eqnarray*}

The space of flags $\Fl(k)$ is identified with the homogeneous space $\PGL(3,k)/ B$, where $B$ is the Borel subgroup of upper-triangular matrices in $\PGL(3,k)$. Similarly, the space of affine flags $\AFl(k)$ is identified with the homogeneous space $\SL(3,k)/ U$, where $U$ is the subgroup of  unipotent upper-triangular matrices in $\SL(3,k)$.

\subsection{} 
Given a $G$-space $X$, we classicaly define the configuration module of ordered points in $X$ as follows.
For $n\geq 0$, let
$C_n(X)$ be the free abelian group
generated by the set 
$$
(p_0,\cdots,p_{n})\in X^{n+1}
$$
of all ordered $(n+1)$ set of points in
$X$.   The group $G$ acts on $X$ and therefore
 also acts diagonally on $C_n(X)$ giving it a
left $G$-module structure.

We define the differential $d_n: C_n(X)\rightarrow C_{n-1}(X)$
by
$$
 d_n(p_0,\dots,p_n)=\sum_{i=0}^{n}(-1)^i (p_0,\dots,\hat{p_i},
 \dots, p_n),
$$
 then we can check that every $d_n$ is a $G$-module homomorphism and
$d_n\circ d_{n+1}=0$.
 Hence we have the $G$-complex
 $$
    C_{\bullet}(X): \cdots \rightarrow C_n(X)\rightarrow C_{n-1}(X)\rightarrow
    \cdots \rightarrow C_0(X).
 $$
The augmentation map $\epsilon: C_0(X)\rightarrow \bZ $ is defined on generators by 
 $\epsilon(p)=1$ for each $p\in X$.
If $X$ is infinite, the augmentation complex is exact.

For a left $G$-module $M$, we denote $M_G$ its group of co-invariants, that
is,
\[
 M_{G}=M/\langle gm-m, \forall g\in G, m\in M \rangle.
\]
Taking the co-invariants of the complex
$C_{\bullet}(X)$, we get the induced complex:
$$
    C_{\bullet}(X)_G: \cdots \rightarrow C_n(X)_G\rightarrow C_{n-1}(X)_G\rightarrow
    \cdots \rightarrow C_0(X)_G,
$$
with differential $\bar{d}_n: C_n(X)_G\rightarrow
C_{n-1}(X)_G$ induced by $d_n$. 
We call $H_\bullet(X)$ the homology of this complex.


\subsection{} We let now $G=\PGL(3,k)$ and $X=\Fl$.
For every integer $n\geq 0$, the $\bZ$-module of {\it coinvariant 
configurations of $n+1$ ordered  flags} is defined by:
$$\C_{\bullet}(\Fl) = C_{\bullet}(\Fl)_G.$$
The natural projection $\pi \: : \: \SL(3)\rightarrow \PGL(3) \to \PGL(3)/B=\Fl$ 
gives a map 
$$
\pi_* \: : \: H_3(\SL(3)) \to H_3(\Fl).
$$

We will study in this paper the homology groups $H_3(\SL(3,k))$ 
(which is the third group of discrete homology of $\SL(3,k)$), 
$H_3(\AFl)$ and $H_3(\Fl)$.

It is usefull to consider a subcomplex of $\C_{\bullet}(\Fl)$ of generic configurations which 
contains all the information about its homology.  We leave to the reader the 
verification that indeed the definition below gives rise to subcomplexes of $C_3(\Fl)$
and $\C_3(\Fl)$   

\subsection{} A {\it generic configuration of  flags} $([x_i],[f_i])$, $1\leq i\leq n+1$ 
is given by $n+1$ points $[x_i]$ in general position and $n+1$ lines $\textrm{Ker }f_i$
in $\P(V)$ such that $f_j(x_i)\neq 0$ if $i\neq j$.
We will denote $C^r_n(\Fl)\subset C_n(\Fl)$ and  $\C^r_n(\Fl)\subset \C_n(\Fl)$ the corresponding
module of configurations and its coinvariant module by the diagonal action by $\SL(3)$.

A configuration of ordered points in $\P(V)$ is said to be in {\it general position} when they are all distinct and no three points are contained in the same line.  Observe that the genericity condition of flags does not imply that the lines are in a general position.

\subsection{} \label{normal}
Since $G$ acts
transitively on $C^r_1\Fl$, we see that $C^r_n(\Fl)_G=\bZ$ if $n\leq 1$, and
the differential $\bar{d}_1: C^r_1(\Fl)_G\rightarrow
C^r_{0}(\Fl)_G$ is zero.

In order to describe $\C^r_2(\Fl)$ consider a configuration of $3$ generic flags $([x_i ] , [f_i ] )_{1 \leq i \leq 3} \in \C^r_2(\Fl)$.
One can then define a projective coordinate system of $\P (\bC^3)$: take the one where the point $x_1$ has coordinates $[1:0 : 0]^t$, 
the point $x_2$ has coordinates $[0:0:1]^t$, the point $x_3$ has coordinates $[1:-1:1]^t$ and the intersection of $\mathrm {Ker} f_1$ and $\mathrm {Ker} f_2$ has coordinates $[0 : 1 : 0]^t$. The line $\mathrm {Ker} f_3$ then 
has coordinates $[z :  z+1 : 1]$ where
$$z=  \frac{f_1(x_2)f_2(x_3)f_3(x_1)}{f_1(x_3)f_2(x_1)f_3(x_2)} \in k^{\times}$$
is the {\it triple ratio}. We have $\C^r_2(\Fl)=\bZ[k^{\times}]$. Moreover
the differential  $\bar{d}_2 : C^r_2(\Fl)_G\rightarrow
C^r_{1}(\Fl)_G$ is given on generators $z\in k^{\times}$  by $\bar{d}_2(z)= 1$ and therefore $H_1(\Fl)=0$.

We denote by $z_{123}$ the triple ratio of a cyclically oriented triple of flags $([x_i] , [f_i] )_{i=1,2,3}$. Note that $z_{213} = 1/z_{123}$.  Observe that when $z_{123}=-1$
the three lines are not in general position.

\subsection{Coordinates for a tetrahedron of flags}\label{ss:coord-para}

We call a generic configuration of $4$ flags a \emph{tetrahedron of flags}. The coordinates we use for a tetrahedron of flags are the same as those used by Fock and Goncharov \cite{FG} to describe a flip in a triangulation. We may see it as a blow-up of the flip into a tetrahedron.  They also coincide with coordinates used in \cite{falbel} to describe a {\sl cross-ratio structure} on a tetrahedron (see also section \ref{CR}).

Let $([x_i],[f_i])_{1\leq i \leq 4}$ be an element of $\C_3(\Fl)$. Let us dispose symbolically these flags on a tetrahedron $1234$ (see figure \ref{tetra}). 
We define a set of 12 coordinates on the edges of the thetrahedron ($1$ for each oriented edge) and four coordinates associated to the faces.

\begin{figure}[ht]      
\begin{center}
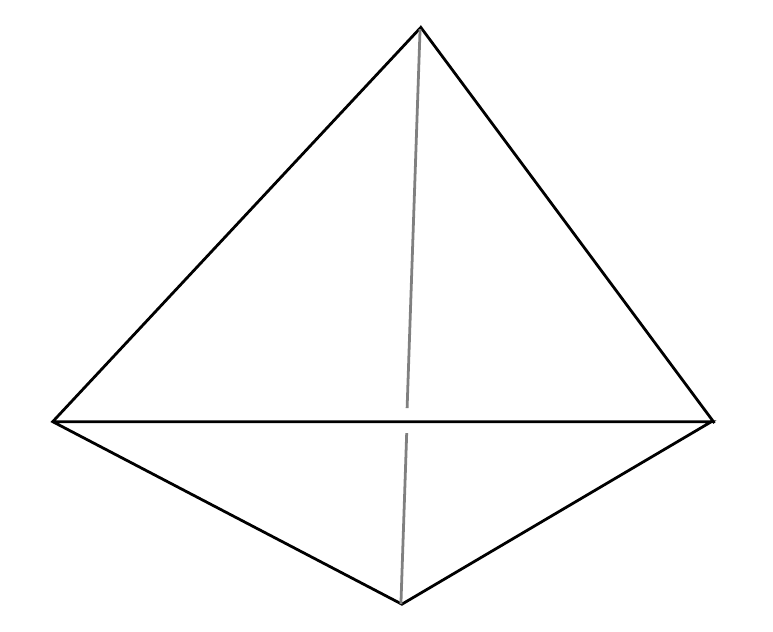
\caption{An ordered tetrahedron} \label{tetra}
\end{center}
\end{figure}

To define the coordinate $z_{ij}$ associated to the edge $ij$, we first define $k$ and $l$ such that the permutation $(1,2,3,4) \mapsto (i,j,k,l)$ is even. The pencil of (projective) lines through the point $x_i$ is a projective line $\P_1(k)$. We naturally have four points in this projective line: the line $\textrm{ker}(f_i)$ and the three lines through $x_i$ and one of the $x_l$ for $l\neq i$. We define $z_{ij}$ as the cross-ratio\footnote{Note that we follow the usual convention (different from the one used by Fock and Goncharov) that the cross-ratio of four points 
$x_1, x_2 , x_3 , x_4$ on a line is the value at $x_4$ of a projective coordinate taking
value $\infty$ at $x_1$, $0$ at $x_2$, and $1$ at $x_3$. So we employ the formula 
$\frac{(x_1 -x_3)(x_2-x_4)}{(x_1 -x_4)(x_2-x_3)}$ for the cross-ratio.} of these four points:    
$$z_{ij} := [\textrm{ker}(f_i),(x_ix_j),(x_ix_k),(x_ix_l)].$$

We may rewrite this cross-ratio thanks to the following useful lemma.

\begin{lemma} \label{fact1}
 We have $z_{ij}=\frac{f_i(x_k)\det(x_i,x_j,x_l)}{f_i(x_l)\det(x_i,x_j,x_k)}$.  Here 
 the determinant is w.r.t. the canonical basis on $V$.
\end{lemma}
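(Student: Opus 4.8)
The plan is to compute the cross-ratio defining $z_{ij}$ directly, after replacing the pencil of lines through $x_i$ by a more tractable projective line. First I would use the duality between lines through $x_i$ in $\P(V)$ and points of the projective line $\P(\mathrm{Ann}(x_i)) \subset \P(V^*)$, where $\mathrm{Ann}(x_i) = \{ g \in V^* : g(x_i) = 0\}$ is the $2$-dimensional annihilator of $x_i$: a line through $x_i$ is the projectivisation of a plane $\ker g \subset V$ containing $x_i$, and $[g] \mapsto \ker g$ identifies $\P(\mathrm{Ann}(x_i))$ with the pencil. This correspondence is a projective isomorphism, hence preserves cross-ratios, so it suffices to compute the cross-ratio of the four corresponding forms inside $\P(\mathrm{Ann}(x_i))$.

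Second, I would write down these four forms explicitly. The line $\ker(f_i)$ corresponds to $f_i$ itself (note $f_i(x_i)=0$ since $(x_i,f_i)$ is a flag), while for $m \in \{j,k,l\}$ the line $(x_i x_m)$ corresponds to the form $\det(x_i, x_m, \cdot)$, whose kernel is $\mathrm{span}(x_i, x_m)$. Taking $A = \det(x_i, x_j, \cdot)$ and $B = \det(x_i, x_k, \cdot)$ as a basis of $\mathrm{Ann}(x_i)$ (independent because $x_i, x_j, x_k$ are in general position), I would read off the homogeneous coordinates of each form by evaluating at $x_j$ and $x_k$; for instance $f_i = \alpha A + \beta B$ with $\alpha = f_i(x_k)/\det(x_i,x_j,x_k)$ and $\beta = -f_i(x_j)/\det(x_i,x_j,x_k)$, and similarly for $\det(x_i,x_l,\cdot)$. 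Feeding the four coordinate pairs into the cross-ratio formula of the footnote, written in homogeneous coordinates as $[p_1,p_2,p_3,p_4] = \frac{\det(p_1,p_3)\det(p_2,p_4)}{\det(p_1,p_4)\det(p_2,p_3)}$ with $p_1 = f_i$, $p_2 = A$, $p_3 = B$, $p_4 = \det(x_i,x_l,\cdot)$, a short computation gives
\[
z_{ij} = \frac{f_i(x_k)\det(x_i,x_j,x_l)}{f_i(x_k)\det(x_i,x_j,x_l) - f_i(x_j)\det(x_i,x_k,x_l)} .
\]

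The main step is then to collapse this denominator into $f_i(x_l)\det(x_i,x_j,x_k)$, which is where the announced formula really comes from. For this I would invoke a Laplace/Plücker identity: consider the $4\times 4$ matrix whose $m$-th row is $(f_i(y_m), y_m)$, with $(y_1,y_2,y_3,y_4) = (x_i,x_j,x_k,x_l)$. Its first column is a linear combination of the last three (because $f_i$ is linear), so its determinant vanishes; expanding along the first column yields
\[
\sum_{m=1}^{4} (-1)^{m+1} f_i(y_m)\, \det(y_1,\dots,\widehat{y_m},\dots,y_4) = 0 .
\]
Since $f_i(x_i)=0$, the $m=1$ term drops and the three surviving terms give exactly
\[
f_i(x_k)\det(x_i,x_j,x_l) - f_i(x_j)\det(x_i,x_k,x_l) = f_i(x_l)\det(x_i,x_j,x_k),
\]
which, substituted above, produces the claimed expression. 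I expect this determinant identity to be the only real content of the proof; the rest is routine bookkeeping, and genericity of the configuration guarantees that $\det(x_i,x_j,x_k)$, $\det(x_i,x_j,x_l)$ and $f_i(x_l)$ are all nonzero, so that every ratio involved is well defined.
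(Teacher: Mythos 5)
Your proof is correct, but it takes a genuinely different route from the paper's. The paper evaluates the cross--ratio of the pencil through $x_i$ by intersecting it with the transversal line $(x_kx_l)$: the four lines become the four points $y_i=\mathrm{Ker}(f_i)\cap(x_kx_l)$, $y_j=(x_ix_j)\cap(x_kx_l)$, $x_k$, $x_l$, and since $f_i$ and $\det(x_i,x_j,\cdot)$ restrict on that line to affine functions vanishing at $y_i$ and $y_j$ respectively, the four factors $f_i(x_k)$, $f_i(x_l)$, $\det(x_i,x_j,x_k)$, $\det(x_i,x_j,x_l)$ appear \emph{directly} as the four differences in the cross--ratio formula, with no further identity needed. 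You instead dualize to $\P(\mathrm{Ann}(x_i))$, compute explicit homogeneous coordinates of the four forms in the basis $\bigl(\det(x_i,x_j,\cdot),\det(x_i,x_k,\cdot)\bigr)$, and then must convert the resulting denominator $f_i(x_k)\det(x_i,x_j,x_l)-f_i(x_j)\det(x_i,x_k,x_l)$ into $f_i(x_l)\det(x_i,x_j,x_k)$ via the vanishing of the $4\times4$ determinant with rows $(f_i(y_m),y_m)$ — a correct three--term Laplace/Pl\"ucker relation (the $m=1$ term drops because $f_i(x_i)=0$). I checked your coordinates $\alpha,\beta,\gamma,\delta$ and the resulting cross--ratio $\alpha\delta/(\alpha\delta-\beta\gamma)$; everything is consistent with the paper's cross--ratio convention, and genericity does guarantee all denominators are nonzero. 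The trade--off: your argument is longer and requires the extra determinant identity, but it is completely explicit and isolates the algebraic relation that makes the formula hold; the paper's projection onto $(x_kx_l)$ is slicker but leaves the identification of linear forms with affine coordinates (and the sign bookkeeping) implicit.
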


\begin{proof}
 Consider the following figure: 
\begin{figure}[ht]      
\begin{center}
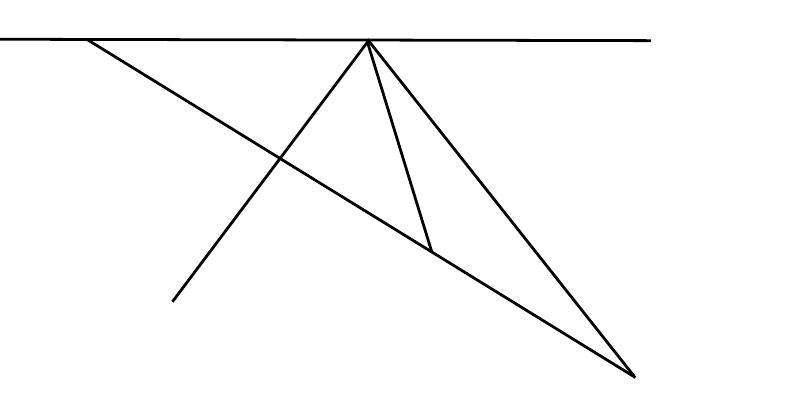
\caption{Cross-ratio} \label{formula}
\end{center}
\end{figure}

By duality, $z_{ij}$ is the cross-ratio between the points $y_i,y_j$ and $x_k,x_l$ on the line $(x_kx_l)$. Now, $f_i$ is a linear form vanishing at $y_i$ and $\det(x_i,x_j, \cdot)$ is a linear form vanishing at $y_ j$. Hence, on the line $(x_k x_l)$, the linear form $f_i (x)$ is proportional to $(\cdot -y_i)$ and 
$\det (x_i , x_j , \cdot )$ is proportional to $(\cdot - y_j )$. This proves the formula.
\end{proof}

\subsection{} Each face $(ijk)$ inherits a canonical orientation as the boundary of the tetrahedron $(1234)$. Hence to the face $(ijk)$, we associate the $3$-ratio
of the corresponding cyclically oriented triple of flags:
$$z_{ijk} = \frac{f_i(x_j)f_j(x_k)f_k(x_i)}{f_i(x_k)f_j(x_i)f_k(x_j)}.$$

Observe that if the same face $(ikj)$ (with opposite orientation)  is common to a second tetrahedron then $$z_{ikj} = \frac{1}{z_{ijk}}.$$

Figure \ref{tetra2} displays the coordinates.

\begin{figure}[ht]      
\begin{center}
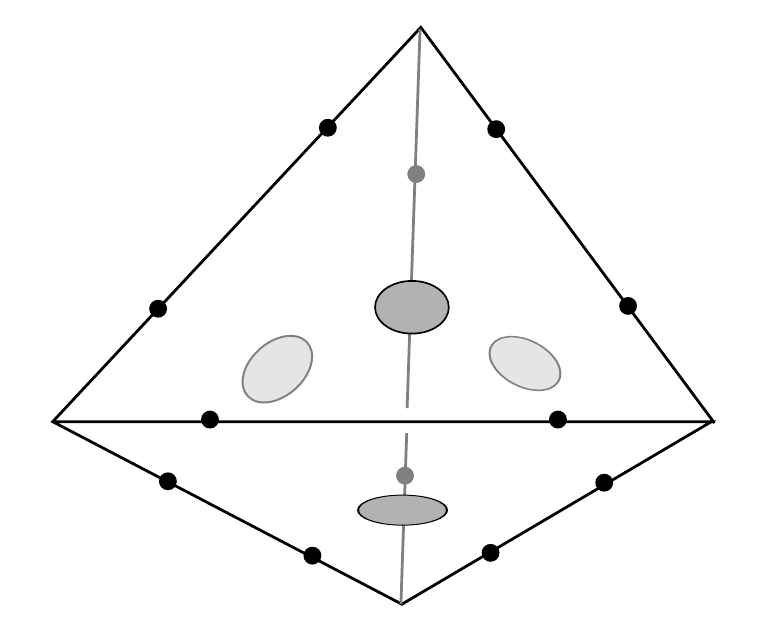
\caption{The $z$-coordinates for a tetrahedron} \label{tetra2}
\end{center}
\end{figure}

\subsection{} Of course there are relations between the whole set of coordinates. Fix an even permutation $(i,j,k,l)$ of $(1,2,3,4)$. First, for each face $(ijk)$, the $3$-ratio is the opposite of the product of all cross-ratios ``leaving'' this face:
\begin{equation} \label{Rel1}
z_{ijk}= - z_{il}z_{jl}z_{kl}.
\end{equation}
Second, the three cross-ratio leaving a vertex are algebraically related:
\begin{equation} \label{Rel2}
\begin{split}
& z_{ik} =  \frac{1}{1-z_{ij}}\\
& z_{il} =   1-\frac{1}{z_{ij}}
\end{split}
\end{equation}

 Relations \ref{Rel2}  are directly deduced from the definition of the coordinates $z_{ij}$, while relation \ref{Rel1} is a direct consequence of lemma \ref{fact1}.

At this point, we choose four coordinates, one for each vertex: $z_{12}$, $z_{21}$, $z_{34}$, $z_{43}$. The next proposition shows that a tetrahedron is uniquely determined by these four numbers, up to the action of $\PGL(3)$.  It also 
shows that the space of {\sl cross-ratio structures}
 on a tetrahedron defined
in \cite{falbel} coincides with the space of generic tetrahedra as defined above.

\begin{proposition}\label{parameters}
A tetrahedron of flags is parametrized by the $4$-tuple
$(z_{12},z_{21},z_{34},z_{43})$ of elements in $k\setminus \{0, \, 1\}$. 
\end{proposition}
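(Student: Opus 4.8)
The plan is to spend the entire $\PGL(3)$-freedom rigidifying the configuration by normalizing its four points, and then to check that each of the four chosen coordinates is a bijective function of a single remaining parameter. First I would record that each $z_{ij}$ is $\PGL(3)$-invariant by construction (it is a cross-ratio in a projective pencil, defined equivariantly), so that the assignment $T\mapsto(z_{12},z_{21},z_{34},z_{43})$ descends to the set of tetrahedra of flags modulo $\PGL(3)$; it remains to show this induced map is a bijection onto $(k\setminus\{0,1\})^4$.

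Since in a generic configuration the four points $[x_1],\dots,[x_4]$ are in general position (no three collinear), they form a projective frame of $\P(V)$, and because $\PGL(3)$ acts simply transitively on frames there is a unique $g\in\PGL(3)$ carrying the configuration to the normal form
$$x_1=[1:0:0]^t,\quad x_2=[0:1:0]^t,\quad x_3=[0:0:1]^t,\quad x_4=[1:1:1]^t.$$
The stabilizer of a frame being trivial, this exhausts $\PGL(3)$, so a tetrahedron modulo $\PGL(3)$ is the same datum as a normalized one, which is in turn determined by the four forms $[f_1],\dots,[f_4]$. The flag conditions $f_i(x_i)=0$ cut each $[f_i]$ down to a line $\P^1\subset\P(V^*)$; writing $f_1=(0,b_1,c_1)$, $f_2=(a_2,0,c_2)$, $f_3=(a_3,b_3,0)$ and $f_4=(a_4,b_4,c_4)$ with $a_4+b_4+c_4=0$, each form carries exactly one projective parameter, for a total of four (consistent with the count $4\cdot 3-\dim\PGL(3)=4$).

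By Lemma \ref{fact1} the coordinate $z_{ij}$ involves only $f_i$ and the four points, so after the points are fixed $z_{12},z_{21},z_{34},z_{43}$ depend respectively and only on $f_1,f_2,f_3,f_4$. I would then compute each one explicitly from Lemma \ref{fact1}, using the even permutation prescribed in \ref{ss:coord-para}: for instance $z_{12}=\frac{f_1(x_3)\det(x_1,x_2,x_4)}{f_1(x_4)\det(x_1,x_2,x_3)}=\frac{c_1}{b_1+c_1}$, a bijection from the $[b_1:c_1]$-line onto $k\setminus\{0,1\}$, in which the three genericity conditions $f_1(x_2),f_1(x_3),f_1(x_4)\neq 0$ correspond exactly to excluding $z_{12}\in\{1,0,\infty\}$. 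The analogous computations $z_{21}=\frac{a_2+c_2}{c_2}$, $z_{34}=-\frac{a_3}{b_3}$ and $z_{43}=-\frac{b_4}{a_4}$ each yield a bijection onto $k\setminus\{0,1\}$, whose three exclusions match the three genericity conditions on the corresponding form. Since the four forms vary independently and the twelve genericity inequalities of the whole configuration decouple into these four independent triples, the induced map is a bijection onto $(k\setminus\{0,1\})^4$, which is the assertion. Relations \ref{Rel2} and \ref{Rel1} then recover all remaining edge- and face-coordinates from the four chosen ones, confirming consistency.

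The only real content is bookkeeping: keeping the even permutations and determinant signs straight across the four explicit evaluations of Lemma \ref{fact1}, and verifying that the twelve genericity conditions split cleanly as four independent triples so that genericity of the tetrahedron is equivalent to all four coordinates lying in $k\setminus\{0,1\}$. I expect this decoupling to be the step demanding the most care, since it is what simultaneously gives surjectivity (every value is realized by a genuinely generic configuration) and the matching of the excluded set $\{0,1\}$ with the flag-genericity constraints.
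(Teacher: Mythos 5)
Your proposal is correct and follows essentially the same route as the paper: fix the four points in the standard projective frame (using up all of $\PGL(3)$), observe that each form $f_i$ then carries exactly one remaining projective parameter, and check via Lemma \ref{fact1} that each of $z_{12},z_{21},z_{34},z_{43}$ is a bijection of that parameter onto $k\setminus\{0,1\}$, with the genericity conditions decoupling into the four corresponding triples. The paper simply writes the same normal form with an explicit affine parameter $z_i$ on each pencil and records the resulting Möbius expressions, so the two arguments differ only in bookkeeping, not in substance.
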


\begin{proof} 
 Let $e_1,\, e_2, \, e_3$ be the canonical basis of $V$ and $(e_1^*,e_2^*,e_3^*)$ its dual basis.
Up to the action of $\SL(3)$, an element $([x_i],[f_i])$ of $\C^r_3(\Fl)$ is uniquely given, in these coordinates, by:
\begin{itemize}
 \item $x_1=(1,0,0)$, $f_1=(0,z_1,-1)$,
 \item $x_2=(0,1,0)$, $f_2=(z_2,0,-1)$,
 \item $x_3=(0,0,1)$, $f_3=(z_3,-1,0)$ and
 \item $x_4=(1,1,1)$, $f_4=z_4(1,-1,0)+(0,1,-1)$.
\end{itemize}
Observe that  $z_i\neq 0$ and $z_i\neq 1$ by the genericity condition.  
Now we compute, using lemma \ref{fact1} for instance, that  
 $z_{12}= \frac{1}{1-z_1}$, $z_{21}=1-z_2$, $z_{34}=z_3$, $z_{43}=1-z_4 $, completing the proof.
\end{proof}

We note that 
one can then compute $\bar{d}_3 : C^r_3(\Fl)_G\rightarrow
C^r_{2}(\Fl)_G$ on the generators of $C^r_3(\Fl)_G$ to be
$$
\bar{d}_3(z_{12},z_{21},z_{34},z_{43})=[z_{123}]-[z_{124}]+[z_{134}]-[z_{234}].
$$
\subsection{Coordinates for affine flags}\label{ss:coord-uni}

We will also need coordinates for a tetrahedron of affine flags 
(the $\mathcal A$-coordinates in Fock and Goncharov \cite{FG}). 
Let $(x_i,f_i)_{1\leq i \leq 4}$ be an element of $C_3(\AFl)$. We also define a set of 12 coordinates on the edges of the thetrahedron 
(one for each oriented edge) and four coordinates associated to the faces:

We associate to the edge $ij$ the number $a_{ij}=f_i(x_j)$  
and to the face $ijk$ (oriented as the boundary of the tetrahedron) the number $a_{ijk}=\det(x_i,x_j,x_k)$.

We remark that for a tetrahedron of affine flags, the $z$-coordinates are well-defined, and are ratios of the affine coordinates:
\begin{equation}
z_{ij} = \frac{a_{ik} a_{ijl}}{a_{il} a_{ijk}} \ \ \ \mbox{ and } \ \ \ z_{ijk} = \frac{a_{ij} a_{jk} a_{ki}}{a_{ik} a_{ji} a_{kj}} .
\end{equation}

\section{Tetrahedra of flags and volume} \label{bloch}

In this section we define the \emph{volume} of a tetrahedron of flags, 
generalizing and unifying the volume of hyperbolic tetrahedra 
(see section \ref{hyp}) and CR tetrahedra 
(see \cite{falbel} and section \ref{CR}). Via proposition \ref{parameters}, it coincides with the 
volume function on 
cross-ratio structures on a tetrahedron as defined in \cite{falbel}. 
 We then define the volume of a simplicial complex of flags tetrahedra. This volume is invariant under a change of triangulation of the simplicial complex (2-3 move) hence is naturally an element of the pre-Bloch group  and the volume is defined on the third homology group of 
flag  configurations (see also \cite{FalbelWang}).  Eventually we get a  map, still called volume, from  the third (discrete) homology group of $\SL(3)$ to the Bloch group, through the natural projection from $H_3(\SL(3))$ to $H_3(\Fl)$. We conclude the section with the proof that this last map actually coincides with the Suslin map from $H_3(\SL(3))$ to the Bloch group.

\subsection{The pre-Bloch and Bloch groups, the dilogarithm}

We define a volume for a tetrahedron of flags by constructing an element of the pre-Bloch group and then taking the dilogarithm map.


The {\it pre-Bloch group} $\mathcal{P}(k)$ is the quotient of the free
abelian group $\bZ[k\setminus\{0,1\}]$ by the subgroup generated by
the 5-term relations
\begin{equation}\label{5term}
 [x]-[y]+[\frac{y}{x}]-[\frac{1-x^{-1}}{1-y^{-1}}]+[
  \frac{1-x}{1-y}],\ \  \forall\, x,y\in k\setminus\{0,1\}.
\end{equation}

For a tetrahedron of flags $T$, let $z_{ij} = z_{ij}(T)$ and $z_{ijk} = z_{ijk}(T)$ be its coordinates. 

\subsection{}
To each tetrahedron 
define
the element
$$
\beta(T)= [z_{12}]+[z_{21}]+[z_{34}]+[z_{43}]\in {\mathcal P(\bC)}
$$
and extend it -- by linearity -- to a function
$$\beta \; : \; \C_3(\Fl(k)) \to {\mathcal P(\bC)}.$$

We emphasize here that $\beta (T)$ depends on the ordering of the vertices of each tetrahedron\footnote{This assumption may be removed by averaging $\beta$ over all orderings of the vertices. In 
any case if $c$ is a chain in $C_3 (\Fl (k))$ representing a cycle in $\C_3 (\Fl (k))$ we can represent
$c$ by a closed $3$-cycle $K$ together with a numbering of the vertices of each tetrahedron of $K$ (see section \ref{ss:neumanncomplex}).}
$T$. The following proposition implies that $\beta$ is well defined on $H_3(\Fl)$. 

\begin{proposition}\label{betaH3}
We have: $\beta (\overline{d}_4 (\C_4(\Fl))) =0$.
\end{proposition}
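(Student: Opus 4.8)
The plan is to reduce the statement to a single generic configuration of five flags, and then to recognize the resulting combination of dilogarithm symbols as a composite of classical five-term relations, produced by projecting from each of the five points and from the conic through them.

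First I would use that $\beta$ factors through coinvariants (cross-ratios and triple ratios are $\PGL(3)$-invariant) and that $\overline{d}_4$ is induced by the simplicial differential, so by linearity it suffices to evaluate $\beta \circ \overline{d}_4$ on the class of one generic configuration of five flags $(p_0,\dots,p_4)$, with $p_i = ([x_i],[f_i])$. Here $\overline{d}_4(p_0,\dots,p_4) = \sum_{m=0}^{4}(-1)^m T_m$, where $T_m$ is the tetrahedron obtained by omitting $p_m$, so that $\beta(\overline{d}_4(p_0,\dots,p_4)) = \sum_{m}(-1)^m \beta(T_m)$ is a sum of $5\times 4 = 20$ symbols $[z_{\cdots}]$ in $\mathcal{P}(\bC)$. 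Genericity guarantees that all five sub-tetrahedra are generic and that all the cross-ratios below are defined and computed from distinct points.

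Next I would organize the $20$ terms according to the vertex $x_v$ at which each cross-ratio ``sits''. Recall that $z_{vj}$ is the cross-ratio, in the pencil of lines through $x_v$ (a copy of $\P^1$), of the four points $[\ker f_v]$, $[(x_vx_j)]$, $[(x_vx_k)]$, $[(x_vx_l)]$; in particular $[\ker f_v]$ always occurs. For fixed $v$, the four terms coming from the tetrahedra $T_m$ with $m\neq v$ are cross-ratios of the four-element subsets of the five pencil points $\{[\ker f_v]\}\cup\{[(x_vx_w)] : w\neq v\}$ that retain $[\ker f_v]$ (deleting $x_m$ removes the line $[(x_vx_m)]$). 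The five-term relation \ref{5term}, applied to these five points of $\P^1$, expresses the sum of those four ``retain-$\ker f_v$'' cross-ratios as $\pm[c_v]$, where $c_v$ is the remaining cross-ratio obtained by dropping $[\ker f_v]$, i.e. the cross-ratio of the four lines $[(x_vx_w)]$ — equivalently, the cross-ratio of the four points $\{x_w\}_{w\neq v}$ as seen from $x_v$. Thus, modulo five applications of \ref{5term} (one per vertex),
$$
\beta(\overline{d}_4(p_0,\dots,p_4)) \;\equiv\; \sum_{v} \pm\,[c_v].
$$
Finally I would kill these leftover symbols using the conic through the five points: since no three of the $x_v$ are collinear, they lie on a unique smooth conic $C\cong\P^1$, and by the theorem of Chasles--Steiner the cross-ratio of four points of $C$ viewed from a fifth point of $C$ equals their intrinsic cross-ratio on $C$. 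Hence each $c_v$ is the intrinsic cross-ratio on $C$ of the four points $\{x_w\}_{w\neq v}$, independent of the vantage $x_v$, and one last application of \ref{5term} to the five points $x_0,\dots,x_4$ on $C\cong\P^1$ gives $\sum_v \pm[c_v] = 0$. Combining the six relations yields $\beta(\overline{d}_4(p_0,\dots,p_4))=0$ in $\mathcal{P}(\bC)$.

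The hard part will be the sign-and-ordering bookkeeping rather than the geometry. One must check that the four $\beta$-terms at each vertex $v$, carrying the signs $(-1)^m$ from $\overline{d}_4$ together with the ordering conventions built into $\beta$ (the specific choice $z_{12},z_{21},z_{34},z_{43}$ and the relabelling of the four surviving flags as $1,2,3,4$ in each $T_m$), are precisely the four ``retain-$\ker f_v$'' terms of a single five-term relation with matching signs, and that the induced signs on the $c_v$ agree with those of the conic five-term relation. Because each cross-ratio depends on the ordering of its four points, this matching will in general require bringing each $z_{vj}$ into the standard order via the functional and inversion relations \ref{Rel2} — which are themselves consequences of the five-term relation \ref{5term} — so that the identifications take place genuinely inside $\mathcal{P}(\bC)$. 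This step is routine but delicate, and is where the bulk of the verification lies.
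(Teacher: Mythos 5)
Your strategy is genuinely different from the paper's, which simply invokes a direct computation (\cite[Theorem 5.2]{falbel}): organising the twenty symbols of $\beta(\overline{d}_4(p_0,\dots,p_4))$ into five packets of four, one per vertex, completing each packet by a five-term relation in the pencil of lines at that vertex, and killing the five leftover projection cross-ratios by Chasles--Steiner together with one final five-term relation on the conic, is an attractive and more conceptual route. But the step you defer as ``routine but delicate'' is the entire mathematical content of the proposition, and as written the argument does not close. The four terms sitting at a vertex $v$ are \emph{not} literally four terms of a five-term relation for a single ordered $5$-tuple in the pencil at $x_v$: the position of $v$ inside each sub-tetrahedron varies, the rule selecting which edge coordinate enters $\beta$ ($1\mapsto 2$, $2\mapsto 1$, $3\mapsto 4$, $4\mapsto 3$) depends on that position, and the even-permutation convention in $z_{ij}=[\ker f_i,(x_ix_j),(x_ix_k),(x_ix_l)]$ orders the remaining pencil points differently in each sub-tetrahedron. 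Concretely, for the vertex $p_1$ of $(p_0,\dots,p_4)$, writing $r=\ker f_1$ and $r_w=(x_1x_w)$, the four contributions are $[\mathrm{cr}(r,r_2,r_3,r_4)]$, $[\mathrm{cr}(r,r_0,r_4,r_3)]$, $-[\mathrm{cr}(r,r_0,r_4,r_2)]$ and $[\mathrm{cr}(r,r_0,r_3,r_2)]$: three of the four have their last two arguments transposed relative to the natural order, i.e.\ are the \emph{inverses} of the cross-ratios occurring in the five-term relation for $(r,r_0,r_2,r_3,r_4)$.

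Replacing $[1/\lambda]$ by $-[\lambda]$ (and handling the other argument permutations arising at the vertices occupying positions $3$ and $4$, where $z_{34},z_{43}$ impose yet other orderings) is exactly where the danger lies: in $\mathcal{P}(k)$ as defined by \eqref{5term} alone, $[\lambda]+[1/\lambda]$ and $[\lambda]+[1-\lambda]$ are a priori only constants independent of $\lambda$ (torsion elements), not zero. Each packet therefore reduces to $\pm[c_v]$ only up to multiples of these universal constants, and your proof needs either a complete tracking showing that all such constants cancel over the five vertices and the final conic relation, or an independent argument that they vanish in $\mathcal{P}(\bC)$; you must also verify that the leftover symbols $[c_v]$ appear with the alternating signs $(-1)^v$ and in the orderings induced by the global order of $x_0,\dots,x_4$, so that the conic five-term relation actually applies. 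None of this is carried out, and it is precisely the computation that the paper delegates to \cite{falbel}; until it is done, what you have is a plausible and appealing plan rather than a proof.
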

\begin{proof} We have to show that $\mathrm{Im} (\overline{d}_4 )$ is contained in the subgroup generated by the $5$-term relations. This is proven by computation and is exactly the content of \cite[Theorem 5.2]{falbel}.
\end{proof}

\subsection{} We use wedge $\wedge_{_\mathbb{Z}}$ for skew symmetric product on Abelian groups. 
Consider $k^{\times}\wedge_{_\bZ} k^{\times}$, where $k^{\times}$ is the
multiplicative group of $k$. It is  the abelian group generated by the set $x\wedge_{_\mathbb{Z}} y$ factored by the relations 
$$xy \wedge_{_\mathbb{Z}} z = x \wedge_{_\mathbb{Z}} z + y \wedge_{_\mathbb{Z}} z \mbox{ and } 
x\wedge_{_\mathbb{Z}} y = - y\wedge_{_\mathbb{Z}} x.$$
In particular, $1 \wedge_{_\mathbb{Z}} x=0$ for any $x \in  k^{\times}$, and 
$$x^n \wedge_{_\mathbb{Z}} y = n (x\wedge_{_\mathbb{Z}} y) = x\wedge_{_\mathbb{Z}} y^n.$$

\subsection{}
The {\it Bloch group} $\mathcal{B}(k)$ is the kernel of the homomorphism
$$
\delta : \mathcal{P}(k) \rightarrow k^{\times}\wedge_{_\bZ} k^{\times},
$$
which is defined on  generators of $\mathcal{P}(k)$  by
$\delta ([z])=z\wedge(1-z)$.

The {\it Bloch-Wigner dilogarithm} function is
\begin{align*}
D(x) & = \arg{(1-x)}\log{|x|}-\mathrm{Im} (\int_{0}^{x}\log{(1-t)}\frac{dt}{t}), \\
& = \arg{(1-x)}\log{|x|} + \mathrm{Im}( \ln_2 (x)).
\end{align*}
Here  $ \ln_2 (x)=\int_{0}^{x}\log{(1-t)}\frac{dt}{t}$ is the dilogarithm function.
The function $D$ is well-defined and real analytic on $\bC-\{0,1\}$ and
extends to a continuous function on $\bC P^1$ by
defining $D(0) = D(1) = D(\infty) = 0$. It satisfies the 5-term relation and therefore,
for $k$ a subfield of $\bC$, gives rise
to a well-defined map:
$$
  D: \mathcal{P}(k)\rightarrow \bR,
$$
given by linear extension as
$$
 D(\sum_{i=1}^{k}n_i[x_i])=\sum_{i=1}^{k}n_iD(x_i).
$$

\subsection{} \label{par:vol} We finally define the {\it volume map} on $\C_3(\Fl)$ via the dilogarithm (the constant will be explained in the next section):
$$\Vol \, = \frac{1}{4}  D \circ \beta  \; : \; \C_3(\Fl(k)) \to \bC.$$

From Proposition \ref{betaH3}, $\Vol$ is well defined on $H_3(\Fl)$.

\subsection{The hyperbolic case}\label{hyp}

We briefly explain here how the hyperbolic volume for ideal tetrahedra in the hyperbolic space $\mathbb H^3$ fits into the framework described above.

An ideal hyperbolic tetrahedron is given by $4$ points on the boundary of $\mathbb H^3$, 
i.e. $\P_1(\bC)$. Up to the action of $\SL(2,\bC)$, these points are in homogeneous coordinates
 $[0,1]$, $[1,0]$, $[1,1]$ and $[1,t]$ -- the complex number $t$ being the cross-ratio of these four points. 
So its volume is $D(t)$ (see e.g. \cite{Zagier}).

Identifying $\bC^3$ with the Lie algebra $\mathfrak{sl}(2,\bC)$, we have the adjoint action 
of $\SL(2,\bC)$ on $\bC^3$ preserving the quatratic form given by the determinant, 
given in canonical coordinates by $xz-y^2$. The group $\SL(2,\bC)$ preserves the 
isotropic cone of this form. The projectivization of this cone is identified to $\P_1(\bC)$ 
via the Veronese map (in canonical coordinates):
\begin{eqnarray*}
 h_1 \: : \:  \P_1(\bC) & \to & \P_2(\bC) \\
 \left[x,y\right] & \mapsto &  [x^2,xy,y^2]
\end{eqnarray*}
The first jet of that map gives a map $h$ from $\P_1(\bC)$ to the variety of flags $\Fl$.
A convenient description of that map is obtained thanks to the 
identification between $\bC^3$ and its dual given by the quadratic form. Denote $\langle ,\rangle$ 
the bilinear form associated to the determinant. Then we have
\begin{eqnarray*}
 h \: : \:\P_1(\bC) & \to & \Fl(\bC)\\
 p & \mapsto &  \left(h_1(p), \langle h_1(p), \cdot \rangle \right).
\end{eqnarray*}

Let $T$ be the tetrahedron $h([0,1])$, $h([1,0])$, $h([1,1])$ and $h([1,t])$. An easy computation 
gives its coordinates:
$$ z_{12}(T) = t  \hskip1cm z_{21}(T)=t \hskip1cm z_{34}(T)=t \hskip1cm z_{43}(T)=t.$$

It implies that $\beta(T)=4t$ and our function $\Vol$ coincide with the hyperbolic volume: 
$$\Vol(T)=D(t).$$

\begin{remark}
{\rm Define an involution $\sigma$ on the $z$-coordinates by:
$$\sigma (z_{ijk} ) = \frac{1}{z_{ijk}}$$
on the faces and
$$\sigma (z_{ij}) = \frac{z_{ji} (1+z_{ilj})}{z_{ilj} (1+ z_{ijk})} \mbox{ and } 
\sigma (z_{ji} )= \frac{z_{ij} (1+z_{ijk})}{z_{ijk} (1+z_{ilj})}$$
on edges. The set of fixed points of $\sigma$ correspond exactly with the hyperbolic tetrahedra.}
\end{remark}

\subsection{The CR case}\label{CR}

CR geometry is modeled on the sphere ${\mathbb S}^3$ equipped with a natural $\mathrm{PU}(2,1)$ action.  More precisely, consider the group $\mathrm{U}(2,1)$ preserving the Hermitian form
$\langle z,w \rangle = w^*Jz$ defined on ${\bC}^{3}$ by
the matrix
$$
J=\left ( \begin{array}{ccc}

                        0      &  0    &       1 \\

                        0       &  1    &       0\\

                        1       &  0    &       0

                \end{array} \right )
$$
and the following cones in ${\mathbb C}^{3}$;
$$
        V_0 = \left\{ z\in {\bC}^{3}-\{0\}\ \ :\ \
 \langle z,z\rangle = 0 \ \right\},
$$
$$
        V_-   = \left\{ z\in {\bC}^{3}\ \ :\ \ \langle z,z\rangle < 0
\ \right\}.
$$
Let $\pi :{\bC}^{3}\setminus\{ 0\} \rightarrow \mathbb{CP}^{2}$ be the
canonical projection.  Then
${\mathbb H}_{\bC}^{2} = \pi(V_-)$ is the complex hyperbolic space and its boundary is
$$
\partial{\mathbb H}_{\bC}^{2} ={\mathbb S}^3= \pi(V_0)=\{ [x,y,z]\in \mathbb{CP}^{2}\ |\ x\bar z+ |y|^2+z\bar x=0\ \}.
$$
The group of biholomorphic transformations of ${\mathbb H}_{\bC}^{2}$ is then
$\mathrm{PU}(2,1)$, the projectivization of $\mathrm{U}(2,1)$.  It acts on ${\mathbb S}^3$ by
CR transformations.  

An element $x\in {\mathbb S}^3$ gives rise to an element  $([x],[f])\in \Fl(\bC)$ where $[f]$ corresponds to the unique complex line tangent to ${\mathbb S}^3$ at $x$.  
As in the hyperbolic case we may consider the inclusion map
\begin{eqnarray*}
 h_1 \: : \:  {\mathbb S}^3 & \to & \P_2(\bC) \\
\end{eqnarray*}
and the first complex jet of that map gives a map
\begin{eqnarray*}
 h \: : \: {\mathbb S}^3 & \to & \Fl(\bC)\\
 x & \mapsto &  \left(h_1(x), \langle . , h_1(x) \rangle \right)
\end{eqnarray*}
A generic configuration of four points in ${\mathbb S}^3$ is given, up to $\mathrm{PU}(2,1)$, by the following
four elements in homogeneous coordinates (we also give for each point $x_i$, the corresponding 
dual $f_i$ to the complex line containing it and tangent to ${\mathbb S}^3$):
\begin{itemize}
 \item $x_1=(1,0,0)$, $f_1=(0,0,1)$,
 \item $x_2=(0,0,1)$, $f_2=(1,0,0)$,
 \item $x_3=(\frac{-1+it}{2},1,1)$, $f_3=(1,1,\frac{-1-it}{2})$ and
 \item $x_4=(\frac{|z|^2(-1+is)}{2},z,1)$, $f_4=(1,\bar z,\frac{-|z|^2(1+is)}{2})$
\end{itemize}
with $z\neq 0,1$ and  $\overline{z}\frac{s+i}{t+i}\neq 1$.  Observe that $\mathrm{PU}(2,1)$ acts doubly transitively on ${\mathbb S}^3$ and 
for a generic triple of points $x_1,x_2,x_3$ the triple ratio of the corresponding flags
 is given by $\frac{1-it}{1+it}$.
 One can easily compute the invariants of the tetrahedron:
$$
 z_{12}=z,\; z_{21}=\frac{\overline{z}(s+i)}{t+i},\; z_{34}=\frac{z[(t+i)-\overline{z}(s+i)]}{(z-1)(t-i)},\;
 z_{43}=\frac{\overline{z}(z-1)(s-i)}{(t+i)-\overline{z}(s+i)}.
$$
The following proposition describes the space of
generic configurations of four points in ${\mathbb S}^3$.
\begin{proposition} 
Configurations (up to translations by $\mathrm{PU}(2,1)$) of four generic points
in ${\mathbb S}^3$ are parametrised
by elements in  $\C^r_3(\Fl)$
with coordinates $z_{ij}$,  $1\leq i\neq j\leq 4$ satisfying  the three complex equations
\begin{eqnarray}\label{eq:cr}
z_{ij}z_{ji}=\overline {z_{kl}z_{lk}}
\end{eqnarray}
with the exclusion of solutions such that
$z_{ij}z_{ji}z_{ik}z_{ki}z_{il}z_{li}=-1$ and $z_{ij}z_{ji}\in \bR$.
\end{proposition}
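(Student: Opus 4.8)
The plan is to prove the proposition as a bijection, realizing the CR condition as the existence of a Hermitian polarity compatible with all four flags. A point $x\in{\mathbb S}^3=\pi(V_0)$ is singled out among all flags by the fact that its flag $h(x)=([x],[f])$ is \emph{self-polar} for the Hermitian form $J$: one has $f=\langle\cdot,x\rangle$, equivalently $f\propto x^*J$. Thus giving a configuration of four points of ${\mathbb S}^3$ up to $\PU(2,1)$ amounts to giving a tetrahedron of flags $([x_i],[f_i])$ together with a nondegenerate Hermitian form $H$ of signature $(2,1)$, unique up to a real scalar, making each flag self-polar: $f_i\propto x_i^*H$, equivalently $Hx_i\propto \overline{f_i}^{\,t}$; the $x_i$ are then automatically isotropic since $f_i(x_i)=0$. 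So the heart of the matter is to show that such an $H$ exists exactly when the three equations \eqref{eq:cr} hold, outside the stated degenerate locus.

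For necessity, I would start from a genuine configuration in ${\mathbb S}^3$ and choose the polar lifts, so that the affine coordinates become the Gram matrix $a_{ij}=f_i(x_j)=\langle x_j,x_i\rangle$. Hermitian symmetry gives $a_{ij}=\overline{a_{ji}}$. Combining this with the identity $z_{ij}z_{ji}=\frac{a_{ik}a_{jl}}{a_{il}a_{jk}}$ — which follows at once from Lemma \ref{fact1} by cancelling the two face determinants — a one-line computation yields $z_{ij}z_{ji}=\overline{z_{kl}z_{lk}}$ for each of the three partitions of $\{1,2,3,4\}$ into complementary pairs, which are precisely \eqref{eq:cr}.

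For sufficiency I would reverse this: given flags satisfying \eqref{eq:cr}, I look for scalars $\lambda_i$ such that the rescaled matrix $b_{ij}:=\lambda_i a_{ij}$ is Hermitian with (automatically) vanishing diagonal; any such $b$ is the Gram matrix of a Hermitian form $H$, recovered on the basis $x_1,x_2,x_3$ and then checked on $x_4$. Writing $\lambda_i=r_ie^{i\theta_i}$, the Hermitian conditions $\lambda_i a_{ij}=\overline{\lambda_j a_{ji}}$ split into a multiplicative system for the moduli $r_i$ and an additive system for the phases $\theta_i$ indexed by the edges; their solvability is governed by cocycle (triangle) conditions around the faces, and these are exactly a scale-invariant repackaging of \eqref{eq:cr}. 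This produces $H$, unique up to a real scalar, hence the isotropic lifts $x_i$ up to the residual scaling; transporting $H$ to the standard form $J$ by an element of $\GL(3,\bC)$, well defined up to $\mathrm{U}(2,1)$, recovers four points of ${\mathbb S}^3$ unique up to $\PU(2,1)$. Concretely, the same conclusion can be reached in coordinates: set $z:=z_{12}$, solve the single complex relation $z_{21}=\frac{\overline z(s+i)}{t+i}$ for the two real unknowns $s,t$, and use \eqref{eq:cr} to check that $z_{34},z_{43}$ then take the prescribed values.

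The hard part is the signature/degeneracy analysis isolating the excluded locus. Since the $x_i$ are nonzero isotropic vectors for $H$, the form $H$ is never definite, so the only way the reconstruction fails to land on a genuine sphere is that $H$ degenerates. I would compute that the product over the three edges at a vertex satisfies $z_{ij}z_{ji}\,z_{ik}z_{ki}\,z_{il}z_{li}=1/z_{jkl}$, so that the excluded condition $z_{ij}z_{ji}z_{ik}z_{ki}z_{il}z_{li}=-1$ is equivalent to $z_{jkl}=-1$, i.e. to the three lines of the opposite face being in special position — precisely where the polarity degenerates. The delicate points are then: (i) to show that, together with $z_{ij}z_{ji}\in\bR$, this condition cuts out exactly the locus $\det H=0$; (ii) to verify that off this locus the four isotropic points are distinct and generic in ${\mathbb S}^3$; and (iii) to check injectivity, namely that inequivalent flag configurations cannot yield $\PU(2,1)$-equivalent point configurations, which follows from the uniqueness of $H$ up to a real scalar established above.
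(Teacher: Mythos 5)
The paper offers no written proof here: the proposition is meant to be read off from the preceding $\PU(2,1)$-normal form of four generic points of $\bS^3$ in the parameters $(z,t,s)$ and the displayed formulas for $z_{12},z_{21},z_{34},z_{43}$ --- that is, exactly the ``concrete'' coordinate argument you mention at the end. Your polarity approach is a genuinely different and more conceptual route, and its skeleton is sound: it exhibits \eqref{eq:cr} as the Hermitian symmetry of a Gram matrix and the excluded locus as the degeneration of the polarity, both of which the coordinate computation leaves opaque. The necessity step is complete and correct; one caveat is that the cancellation $z_{ij}z_{ji}=a_{ik}a_{jl}/(a_{il}a_{jk})$ and the matching with $\overline{z_{kl}z_{lk}}$ require using, for each pair of opposite edges, the even completions $(i,j,k,l)$ and $(k,l,i,j)$ of Lemma \ref{fact1}; with the wrong completion one of the three equations comes out inverted, so this bookkeeping is worth writing down.

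The two steps you defer are where the content lies, and as written they are gaps --- though both close. (a) That solvability of $\overline{\mu_j}a_{ij}=\mu_i\overline{a_{ji}}$ is ``exactly'' \eqref{eq:cr} needs proof: setting $c_{ij}=a_{ij}/\overline{a_{ji}}$ and eliminating $\mu_2,\mu_3,\mu_4$ against $\mu_1$, solvability amounts to the quantities $c_{ij}/(c_{i1}c_{1j})$, $i\neq j\in\{2,3,4\}$, being all equal; comparing $\{i,j\}$ with $\{i,k\}$ gives one equation of \eqref{eq:cr}, while comparing $\{i,j\}$ with $\{j,i\}$ gives $|z_{1ij}|=1$, which does follow from \eqref{eq:cr} because each edge at a vertex is opposite to an edge of the opposite face, so multiplying the three equations yields $1/z_{jkl}=\overline{z_{jkl}}$. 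One should also note that the equations involving $\mu_4$ are exactly the condition $x_4^*H\propto f_4$, since $x_j^*H=f_j/\mu_j$ holds as linear forms once it holds on the basis $x_1,x_2,x_3$. (b) The degeneracy analysis is simpler than your item (i) suggests: the Gram matrix $G$ in the basis $x_1,x_2,x_3$ has zero diagonal, so $\det G=2\,\mathrm{Re}(G_{12}G_{23}G_{31})$, while $z_{123}=w/\overline{w}$ with $w=G_{12}G_{23}G_{31}$; hence $\det H=0$ if and only if $z_{123}=-1$, i.e. the first excluded condition alone. The reality of the $z_{ij}z_{ji}$ is then automatic on that locus, not an independent constraint: when $H$ has rank $2$ the pairings factor through $\bC^3/\mathrm{rad}(H)$ equipped with a signature-$(1,1)$ form whose projectivized null cone is a circle in $\P_1(\bC)$, and the Hermitian cross-ratios $\langle x_k,x_i\rangle\langle x_l,x_j\rangle/\bigl(\langle x_l,x_i\rangle\langle x_k,x_j\rangle\bigr)=z_{ij}z_{ji}$ of four points of a chain are real; rank $\leq 1$ is excluded by general position of the $x_i$. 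This explains why the proposition can state the exclusion as a conjunction. Off this locus $H$ is nondegenerate and indefinite (it has nonzero isotropic vectors), hence of signature $(2,1)$ up to the real scalar, and your transport and injectivity arguments go through.
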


 As in \cite{falbel} (up to multiplication by 4)  the volume of 
a CR tetrahedron $T_{CR}$  is
$\Vol(T_{CR})=\frac{1}{4}D\circ \beta(T_{CR})$.

\subsection{Relations with the work of Suslin}

We show here how our map $\beta$ allows a new and more geometric way to interpret Suslin map $S : H_3(\SL(3)) \to \pB$ (see \cite{suslin}). First of all, recall that the natural projection $\pi \: : \: \SL(3) \to \Fl=\PGL(3)/B$ gives a map $\pi_* \: : \: H_3(\SL(3)) \to H_3(\Fl)$.

\begin{theorem}
 We have $\beta \circ \pi_* = 4S$.
\end{theorem}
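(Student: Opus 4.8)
The plan is to realise both $S$ and $\beta\circ\pi_*$ as the maps induced on $H_3(\SL(3,k))$ by explicit chain maps out of the homogeneous bar resolution $B_\bullet$ of $\SL(3,k)$, and then to compare these chain maps. Since $\Fl(k)$ is infinite, the augmented complex $C_\bullet(\Fl)\to\bZ$ is an exact complex of $\SL(3,k)$-modules (the centre acts trivially, so $\SL(3,k)$ acts through $\PGL(3,k)$); the orbit map $(g_0,\dots,g_n)\mapsto(g_0x_0,\dots,g_nx_0)$ attached to the standard flag $x_0$ is therefore the canonical comparison map from $B_\bullet$, unique up to equivariant homotopy, and $\pi_*$ is by definition the map it induces after passing to coinvariants. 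First I would record that Suslin's map $S$ of \cite{suslin} arises in exactly the same manner, from the orbit map for the action of $\SL(3,k)$ on configurations of points of $\P(V)$ in general position, followed by the cross-ratio (dilogarithmic) map into $\pB(k)$.

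Next, since the assertion is an equality of two homomorphisms into $\pB(\bC)$, it is enough to compare the two chain maps $B_\bullet\to\pB(\bC)$ from which they come and to show their difference is a boundary. The forgetful map $\Fl\to\P(V)$, $([x],[f])\mapsto[x]$, is $\SL(3,k)$-equivariant and sends a generic tetrahedron of flags to a generic configuration of points; it intertwines the two orbit maps, so both composites are evaluated on the same $3$-cycles. After a generic perturbation (legitimate because $\bC$ is infinite) every class of $H_3(\SL(3,\bC))$ is represented by a $3$-cycle whose tetrahedra are generic, so that all coordinates $z_{ij}$, $z_{ijk}$ are defined; it then suffices to check that the chain map computing $\beta\circ\pi_*$ and $4$ times Suslin's cochain agree up to $\mathrm{Im}(\overline d_4)$, a subgroup generated by five-term relations and hence trivial in $\pB(\bC)$ by Proposition \ref{betaH3}.

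The remaining step is local and combinatorial. For a generic tetrahedron $T$ the definition gives $\beta(T)=[z_{12}]+[z_{21}]+[z_{34}]+[z_{43}]$, whereas Suslin's recipe produces its invariant from the underlying points and lines. Using Proposition \ref{parameters} and the coordinate relations \ref{Rel1} and \ref{Rel2}, which express every face $3$-ratio $z_{ijk}$ and every edge cross-ratio in terms of the four parameters $(z_{12},z_{21},z_{34},z_{43})$, together with the five-term relation \ref{5term}, I would rewrite $\beta(T)$ and $4$ times Suslin's local term as one and the same element of $\pB(\bC)$ modulo $\mathrm{Im}(\overline d_4)$. This is precisely the chain-level identity that produces the constant $4$.

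The hard part will be exactly this last matching. Suslin's normalisation attaches its dilogarithmic data to configurations of points, while $\beta$ attaches four cross-ratio terms to each tetrahedron of flags; reconciling the two amounts to constructing an explicit chain homotopy between the point-configuration complex and the flag complex and to tracking how the four edges $12,21,34,43$ conspire, via \ref{5term}, to give four copies of Suslin's term. Two further points are routine once this is in place: ensuring genericity of the representing cycles (so the $z_{ij}$ exist), and bookkeeping the dependence of $\beta$ on the ordering of the vertices, which is handled exactly as in the definition of $\beta$ by passing to a numbered closed $3$-cycle (cf. section \ref{ss:neumanncomplex}).
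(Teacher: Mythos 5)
Your proposal does not follow the paper's route, and it has a genuine gap precisely at the point you yourself flag as ``the hard part''. The paper's proof is much softer: it invokes Suslin's decomposition $H_3(\SL(3))=H_3(\SL(2))+H_3(T)$ ($T$ the diagonal torus, $\SL(2)$ embedded via the adjoint representation), notes that $S$ vanishes on $H_3(T)$ and equals the cross-ratio on $H_3(\SL(2))$ (Suslin, Lemma 3.4), and then checks the two pieces separately for $\beta\circ\pi_*$: it vanishes on $H_3(T)$ because $T\subset B$, and on a hyperbolic tetrahedron the computation of section \ref{hyp} gives $z_{12}=z_{21}=z_{34}=z_{43}=t$, whence $\beta=4[t]$. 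No chain-level comparison on all of $H_3(\SL(3))$ is ever attempted.

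The obstruction to your plan is concrete: four points of $\P(V)=\P^2(k)$ in general position form a \emph{single} $\PGL(3,k)$-orbit, so there is no nonconstant invariant cochain on generic point configurations in $\P^2$, and in particular no ``Suslin cochain'' on $C_3(\P(V))$ computing $S$ for $\SL(3)$. Suslin's cross-ratio description is a statement about $H_3(\GL(2))$ (four points on $\P^1$), not a local formula on configurations in $\P^2$; for $\SL(3)$ his map is defined by entirely different (stabilization and $K$-theoretic) means. Consequently the forgetful map $\Fl\to\P(V)$ cannot intertwine $\beta$ with any local expression for $S$, and the ``local and combinatorial'' matching you defer to, in which the four terms $[z_{12}],[z_{21}],[z_{34}],[z_{43}]$ would assemble into four copies of a Suslin term modulo the five-term relation, has no candidate identity to verify. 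To repair the argument you would have to reintroduce some reduction to a subgroup on which $S$ \emph{is} given by an explicit configuration formula, which is exactly what the decomposition $H_3(\SL(3))=H_3(\SL(2))+H_3(T)$ accomplishes; at that point you have reproduced the paper's proof.
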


\begin{proof}
Let $T$ be the subgroup of diagonal matrices (in the canonical basis) of $\SL(3)$. Recall that $\SL(2)$ is seen as a subgroup of $\SL(3)$ via the adjoint representation (as in section \ref{hyp}).
 We find in the work of Suslin the three following results:
\begin{enumerate}
 \item $H_3(\SL(3))=H_3(\SL(2))+ H_3(T)$ \cite[p. 227]{suslin}
 \item $S$ vanishes on $H_3(T)$ \cite[p. 227]{suslin}
 \item $S$ coincide with the cross-ratio on $H_3(\SL(2))$ \cite[lemma 3.4]{suslin}.
\end{enumerate}

So we just have to understand the map $\beta\circ \pi_*$ on $T$ and $\SL(2)$. As $T$ is a subgroup of $B$, the map $\beta \circ \pi_*$ vanishes on $T$. And we have seen in the section \ref{hyp} that, on a hyperbolic tetrahedron, $\beta$ coincide with $4$ times the cross-ratio.

This proves the theorem.
\end{proof}

\medskip
\noindent
{\it Remark.} After writing this section we became aware of Zickert's paper \cite{Zickert}. In it
(see \S 7.1)
Zickert defines a generalization -- denoted $\widehat{\lambda}$ -- of Suslin's map. When specialized to
our case his definition coincides with $\frac{1}{4} \beta \circ \pi^*$. We believe that the construction
above sheds some light on the ``naturality'' of this map. 

\section{Decoration of a tetrahedron and the pre-Bloch group}\label{decoration}

In this section we let $T$ be an ordered tetrahedron of flags and compute in two different ways $\delta(\beta (T))$.
The first -- and most natural -- way uses $a$-coordinates associated to some lifting of $T$ as
a tetrahedron of affine flags. In that respect we mainly follow Fock and Goncharov. The second way directly deals with $z$-coordinates and follows the approach of Neumann and Zagier. Finally we explain
how the two ways are related; we will see in the remaining of the paper how fruitful it is to mix them.
 
\subsection{Affine decorations and the pre-Bloch group} \label{acoord}

We first let $(x_i ,f_i)_{1\leq i \leq 4}$ be an element of $\C_3(\AFl)$ lifting $T$. This allows us to associate
$a$-coordinates to $T$. 

Let $J_T^2 = \mathbb{Z}^I$ be the $16$-dimensional abstract free $\mathbb{Z}$-module where (see figure \ref{FG})
$$I = \{ \mbox{vertices of the (red) arrows in the $2$-triangulation of the faces of $T$} \}.$$
We denote the canonical basis $\{e_{\alpha} \}_{\alpha \in I}$ of $J_T^2$.  It contains
 {\it oriented edges} $e_{ij}$ (edges oriented from $j$ to $i$) and {\it faces} $e_{ijk}$. Given $\alpha$ and $\beta$ in $I$ we set:
$$\varepsilon_{\alpha \beta} = \# \{ \mbox{oriented (red) arrows from $\alpha$ to $\beta$}\} -  \# \{ \mbox{oriented (red) arrows from $\beta$ to $\alpha$}\}.$$

\begin{figure}[ht]      
\begin{center}
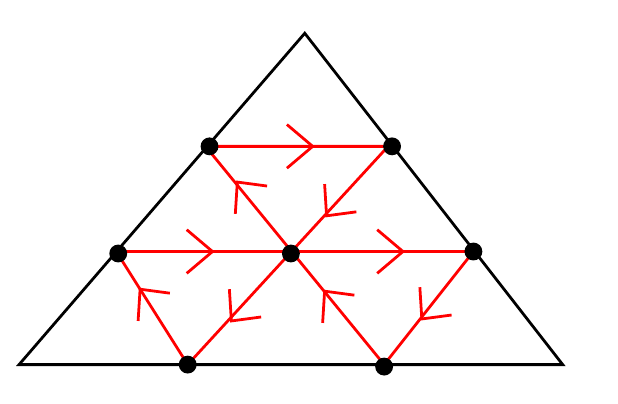
\caption{Combinatorics of $W$} \label{FG}
\end{center}
\end{figure}

\subsection{} \label{R} The $a$-coordinates $\{ a_{\alpha} \}_{\alpha \in I}$ of our tetrahedron of affine flags $T$ now define an element $ \sum_{\alpha \in I} a_{\alpha} e_{\alpha}$ of
$k^{\times} \otimes_{\mathbb{Z}} J_T^2 \cong \mathrm{Hom} ((J_T^2)^* , k^{\times})$ where $k$ is any field which contains all the $a$-coordinates.

Let $V$ be a $\bZ$-module equipped with a bilinear product 
$$
B: V\times V\rightarrow \bZ.
$$ 
We consider on the $k^\times$-module
$V_k = k^{\times}\otimes_\bZ V$ the bilinear product
$$
{\wedge_B} : V_k\times V_k\rightarrow k^{\times}\wedge_\bZ k^{\times}
$$
defined on generators by
$$
(z_1\otimes v_1)\wedge (z_2\otimes v_2)=B(v_1,v_2) (z_1\wedge z_2).
$$



In particular letting $\Omega^2$ be the bilinear skew-symmetric form on $J_T^2$ given by\footnote{Observe in particular that 
$\Omega^2(e_{ji} , e_{ijk})= 1$
and so on, 
the logic being that the vector $e_{ijk}$ is the outgoing vector on the face $ijk$ and the vector
 $e_{ji}$ (oriented from $i$ to $j$) turns around it in the positive sense. }
 $$\Omega^2 ( e_{\alpha} , e_{\beta} ) = \varepsilon_{\alpha \beta},$$
we get:
$$a \wedge_{\Omega^2} a = \sum_{\alpha , \beta \in I} \varepsilon_{\alpha \beta} a_{\alpha} \wedge_{\bZ} a_{\beta}.$$ 

\begin{lemma} \label{deltaobeta}
We have:
\begin{equation} \label{deltaobeta1}
\delta (\beta (T)) = \frac12 a \wedge_{\Omega^2} a.
\end{equation}
\end{lemma}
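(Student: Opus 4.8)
The plan is to compute both sides in the $a$-coordinates of a chosen lift and to match the resulting bilinear expressions in $k^{\times}\wedge_{\bZ}k^{\times}$. By the definitions of $\delta$ and $\beta$,
$$
\delta(\beta(T))=\sum_{(ab)} z_{ab}\wedge(1-z_{ab}),
$$
where $(ab)$ ranges over the four edges $12,21,34,43$, one sitting at each vertex. First I would eliminate the factors $1-z_{ab}$. For the even permutation $(i,j,k,l)$ attached to an edge, relation \eqref{Rel2} gives $z_{ik}=1/(1-z_{ij})$, i.e. $1-z_{ij}=z_{ik}^{-1}$, so that
$$
z_{ij}\wedge(1-z_{ij})=z_{ij}\wedge z_{ik}^{-1}=-\,z_{ij}\wedge z_{ik}.
$$
Applying this at each vertex rewrites the left-hand side as $-\sum z_{ij}\wedge z_{ik}$, a sum of four wedges of genuine cross-ratios based at a common vertex.

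Next I would substitute the monomial expressions $z_{ij}=a_{ik}a_{ijl}/(a_{il}a_{ijk})$ of Section~\ref{ss:coord-uni} (and the analogue for $z_{ik}$), valid after lifting $T$ to a tetrahedron of affine flags. After fixing one reference orientation for each of the four faces, each cross-ratio becomes a sign times a Laurent monomial in the $\{a_\alpha\}_{\alpha\in I}$, so that reading off exponents identifies $z_{ij}$ with a vector $c(ij)=e_{ik}+e_{ijl}-e_{il}-e_{ijk}\in J_T^2$. Bilinearity of $\wedge$ then turns each summand $z_{ij}\wedge z_{ik}$ into the evaluation on the $a$'s of the $2$-vector $c(ij)\wedge c(ik)$, and, using $a_\alpha\wedge a_\beta=-a_\beta\wedge a_\alpha$, the lemma reduces to the finite combinatorial identity
$$
-\sum_{\text{4 vertices}} c(ij)\wedge c(ik)=\tfrac12\,\Omega^2=\sum_{\alpha<\beta}\varepsilon_{\alpha\beta}\,e_\alpha\wedge e_\beta,
$$
whose right-hand coefficients $\varepsilon_{\alpha\beta}$ are the signed arrow-counts of the quiver in Figure~\ref{FG}. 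This is checked by computing the eight vectors $c(ij),c(ik)$, antisymmetrizing, summing, and comparing each coefficient with the corresponding $\varepsilon_{\alpha\beta}$.

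The main obstacle is the sign bookkeeping hidden in the phrase ``fixing one reference orientation''. A single face $\{i,j,k\}$ enters the cross-ratio formulas at several vertices with different index orders, and $\det(x_i,x_k,x_j)=-\det(x_i,x_j,x_k)$ forces $a_{ikj}=-a_{ijk}$; normalising to one orientation per face therefore produces factors $-1$, whose wedges $z_{ij}\wedge(-1)$ are $2$-torsion and a priori nonzero in $k^{\times}\wedge_{\bZ}k^{\times}$. The crux of the computation is to show that, once these orientations are chosen compatibly with the outgoing/positive convention of the footnote defining $\Omega^2$ (so that $\Omega^2(e_{ji},e_{ijk})=1$), all the $(-1)$-contributions coming from the four vertices cancel, leaving only genuine wedges $a_\alpha\wedge a_\beta$.

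Granting that cancellation, the coefficient comparison is routine, and the factor $\tfrac12$ appears for free: the form $\Omega^2$ counts each unordered pair $\{\alpha,\beta\}$ twice (once as $\varepsilon_{\alpha\beta}$ and once as $\varepsilon_{\beta\alpha}=-\varepsilon_{\alpha\beta}$), whereas the summation over the four vertices produces each pair once. Hence $\delta(\beta(T))$ equals $\tfrac12\,a\wedge_{\Omega^2}a$, as claimed.
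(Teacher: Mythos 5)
Your route is genuinely different from the paper's. The paper never touches the relation $1-z_{ij}=z_{ik}^{-1}$: it quotes the proof of \cite[Lemma 4.9]{FalbelWang} to decompose $\delta(\beta(T))$ as the sum of four face invariants $W_{143}+W_{234}+W_{132}+W_{124}$, where each $W_{ijk}$ is the explicit combination \eqref{Wface} of wedges of $a$-coordinates attached to a single oriented face, and then matches $\sum W$ against $\frac12\, a\wedge_{\Omega^2}a$ by reading the quiver of Figure \ref{FG} one face at a time. Your opening step, rewriting $\delta(\beta(T))=-(z_{12}\wedge z_{13}+z_{21}\wedge z_{24}+z_{34}\wedge z_{31}+z_{43}\wedge z_{42})$ via \eqref{Rel2}, is correct and is a clean alternative entry point, and your explanation of the factor $\frac12$ (each unordered pair $\{\alpha,\beta\}$ is counted twice by $\Omega^2$) is exactly right. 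What the paper's detour through the $W_{ijk}$ buys, beyond this lemma, is reuse: the face-by-face decomposition is precisely what makes the unipotent gluing argument work later (the $W$'s of internal faces cancel in pairs, leaving $W(\Sigma)$), whereas your direct computation discards that intermediate structure.

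That said, your proof as written has a gap exactly where you locate ``the crux of the computation''. All the content of the lemma sits in the finite identity $-\sum_{\mathrm{vertices}}c(ij)\wedge c(ik)=\sum_{\alpha<\beta}\varepsilon_{\alpha\beta}\,e_\alpha\wedge e_\beta$ together with the cancellation of the $(-1)$-factors produced by reorienting faces (since $a_{ikj}=-a_{ijk}$, each $z_{ij}$ is only $\pm$ a monomial in the chosen basis $\{a_\alpha\}_{\alpha\in I}$, and $z\wedge(-1)$ is a possibly nonzero $2$-torsion element of $k^{\times}\wedge_{\bZ}k^{\times}$). You state both points and verify neither; ``granting that cancellation, the coefficient comparison is routine'' is the statement of what remains to be proved, not a proof. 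Moreover the comparison is not merely a matching of nonzero coefficients: after re-expressing the $c(ij)$ in the actual basis, the left-hand side contains wedges of two distinct face-center elements (e.g.\ from $c(12)\wedge c(13)$) and, implicitly, requires $\varepsilon_{e_{ij},e_{ji}}=0$ (which holds because the two faces adjacent to the edge $\{i,j\}$ contribute opposite arrows along it); all of these terms must be shown to cancel, since the corresponding $\varepsilon_{\alpha\beta}$ vanish. Until that sixteen-by-sixteen bookkeeping, signs included, is actually carried out, the argument establishes the shape of the identity but not the identity itself.
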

\begin{proof} 
To each ordered face $(ijk)$ of $T$ we associate the element
\begin{equation} \label{Wface}
W_{ijk} = a_{ijk} \wedge \frac{a_{ki} a_{jk} a_{ij}}{a_{ik} a_{kj} a_{ji}} 
+ a_{ij} \wedge a_{ik} + a_{ki} \wedge a_{kj} + a_{jk} \wedge a_{ji} \in k^{\times} \wedge_{\bZ} k^{\times}.
\end{equation}
The proof in the CR case of \cite[Lemma 4.9]{FalbelWang}  obviously
leads to\footnote{Alternatively we may think of $T$ as a geometric realization
of a mutation between two triangulations of the quadrilateral $(1324)$ and apply \cite[Corollary 6.15]{FG}.}:
$$\delta (\beta (T)) = W_{143} + W_{234} + W_{132} + W_{124}.$$
Finally one easily sees that 
$$W_{143} + W_{234} + W_{132} + W_{124} = \frac12  \sum_{\alpha , \beta \in I} \varepsilon_{\alpha \beta} a_{\alpha} \wedge_{\bZ} a_{\beta}.$$ 
\end{proof}

We let 
$$W(T) = W_{143} + W_{234} + W_{132} + W_{124}.$$

\medskip
\noindent
{\it Remark.} 1. The element $W(T)$ coincides with the $W$ invariant associated by Fock and Goncharov to the triangulation by a tetrahedron of a sphere with $4$ punctures. (The 
orientation of the faces being induced by the orientation of the sphere.)

2. Whereas $T$ -- being a tetrahedron of flags -- only depends on the flag
 coordinates, 
each $W$ associated to the faces depends on the {\it affine} flag coordinates.

\medskip

In the next paragraph we make remark 2 more explicit by computing 
$\delta (\beta (T))$ using the $z$-coordinates.

\subsection{The Neumann-Zagier symplectic space}\label{ss:NZsympl}

In this section we  analyse an extension of Neumann-Zagier symplectic space introduced by J. Genzmer \cite{Genzmer}
in  the space of $z$-coordinates associated to the edges of a tetrahedron.  We reinterpret her definitions in our context
of flag tetrahedra.
Recall that we have associated $z$-coordinates to a tetrahedron of flags $T$. These consists of 
$12$ edge coordinates $\{ z_{ij} \}$ and $4$ face coordinates $\{ z_{ijk} \}$ subject to the 
relations \eqref{Rel1} and \eqref{Rel2}. Recall that relation  \eqref{Rel1} is $z_{ijk}=-z_{il}z_{jl}z_{kl}$ and note that \eqref{Rel2} implies in particular that:
\begin{equation} \label{Rel3}
z_{ij} z_{ik} z_{il} = -1 .
\end{equation}
We linearize \eqref{Rel1} and \eqref{Rel3} in the following way: 
We let $J_T$ be the $\bZ$-module obtained as the quotient of $J_T^2 = \bZ^I$ by the kernel of $\Omega^2$. The latter is the subspace generated by elements of the form 
$$\sum_{\alpha \in I} b_{\alpha} e_{\alpha}$$
for all $\{ b_{\alpha} \} \in \mathbb{Z}^I$ such that $\sum_{\alpha \in I} b_{\alpha} \varepsilon_{\alpha \beta} =0$ for every $\beta \in I$. Equivalently it is the subspace generated by $e_{ij}+e_{ik}+e_{il}$ and $e_{ijk}- (e_{il}+e_{jl}+e_{kl})$. We will rather use as generators the elements
$$v_i = e_{ij}+e_{ik}+e_{il} \mbox{  and  } w_i = e_{ji} + e_{ki} + e_{li} + e_{ijk} + e_{ilj} + e_{ikl},$$
see Figure \ref{fig:v_i-w_i0}.  

\begin{figure}[ht]      
\begin{center}
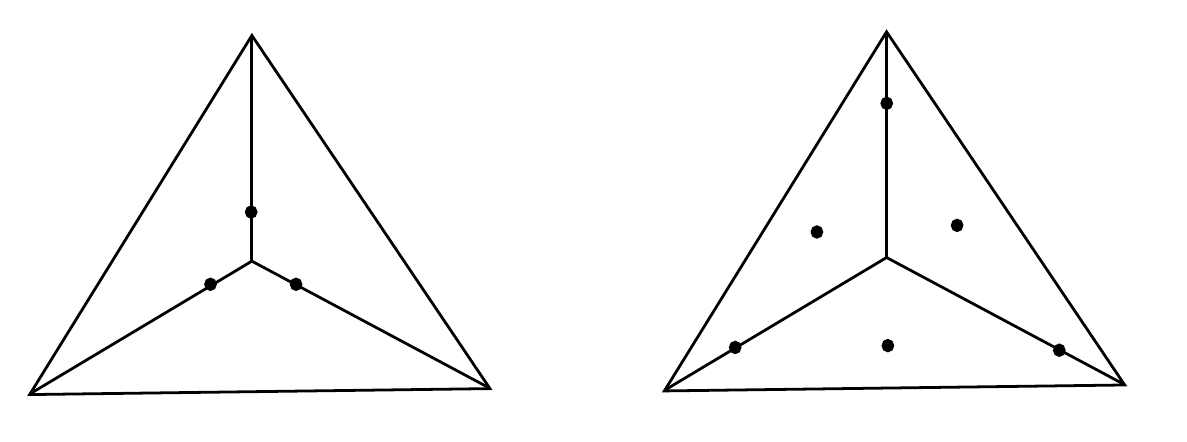
\caption{The vectors $v_i$ and $w_i$ in $\mathrm{Ker}(p)$} \label{fig:v_i-w_i0}
\end{center}
\end{figure}

We let $J_T^* \subset (J_T^2)^*$ be the dual subspace which consists of the linear maps in $(J_T^2)^*$ which vanish on the kernel of $\Omega^2$. Note that $J_T$ (as well as $J_T^*$) is $8$-dimensional.

\subsection{} The $z$-coordinates $\{ z_{\alpha} \}_{\alpha \in I}$ of our tetrahedron of flags $T$ now define an element 
$$z= \sum_{\alpha \in I} z_{\alpha} e_{\alpha}^*\in \mathrm{Hom} (J_T^2 , k^{\times}) \cong
k^{\times} \otimes_{\mathbb{Z}} (J_T^2)^*$$ 
where $k$ is any field which contains the $z$-coordinates.
Because of \eqref{Rel1} and \eqref{Rel3} the image of the kernel of $\Omega^2$ by $z$ is the (torsion) subgroup $\{ \pm 1 \} \subset k^{\times}$ (that is easily checked on $v_i$ and $w_i$). Denoting $V\left[ \frac12 \right]$ the tensor product $V \otimes_{\bZ} \bZ 
\left[ \frac12 \right]$ of a $\Z$-module $V$, we conclude that the element $z \in k^{\times} \otimes (J_T^2)^* \left[ \frac12 \right]$ in fact belongs to $k^{\times} \otimes J_T^* \left[ \frac12 \right]$. 

\subsection{} The space $J_T^*$ is $8$-dimensional and we may associate to $8$ oriented edges (two pointing at each vertex)
of $T$ a basis $\{ f_{ij} \}$. Using this basis, the element $z \in  k^{\times} \otimes_{\bZ} J_T^*\left[ \frac12 \right]$ is written
$z = \sum z_{ij} f_{ij}$. 

We then note that (up to eventually adding a root of $-1$ to $k$):
\begin{equation} \label{deltabetaT}
\begin{split}
\delta (\beta (T)) & =  z_{ij} \wedge_{\bZ} (1-z_{ij}) + z_{ji} \wedge_{\bZ}  (1-z_{ji})  \\
& \ \ \ \ \ \  + z_{kl} \wedge_{\bZ} (1-z_{kl}) + z_{lk} \wedge_{\bZ} (1-z_{lk})  \\
& =  \frac12 z \wedge_{\mathbb{Z}} H z ,
\end{split}
\end{equation}
where $H$ is the linear map $J_T^* \rightarrow J_T^*$ which on generators of $J_T^*$ is given by 
$H(f_{ij})=f_{ ik}$ and $H(f_{ik})=-f_{ ij}$. It yields a linear map 
$H : k^{\times} \otimes_{\mathbb{Z}} J_T^*  \rightarrow k^{\times} \otimes_{\mathbb{Z}} J_T^*$. 
We note that in coordinates:
$$
(Hz)_{f_{ij}}=\frac{1}{z_{ik}}\ \  \mbox{and}\ \ (Hz)_{f_{ik}}=z_{ij}.
$$

\subsection{} The choice of the basis $\{ f_{ij} \}$ of $J_T^*$ and the choice of the map $H$ are 
not canonical but they define a natural symplectic form 
\begin{equation} \label{Omega}
\Omega^* ( \cdot , \cdot )= \langle H \cdot , \cdot \rangle
\end{equation}
on $J_T^*$ where $\langle , \rangle$ is the scalar product associated to the basis $\{f_{ij} \}$. 
Such a symplectic space was first considered by Neumann and Zagier (see \cite{NeumanZagier,Neuman}) in the $\SL(2 , \bC)$ context. 

The following lemma now follows from \eqref{deltabetaT} and \eqref{Omega}.

\begin{lemma} \label{deltaobeta'}
We have:
\begin{equation} \label{deltaobeta2}
\delta (\beta (T)) = \frac12 z \wedge_{\Omega^*} z .
\end{equation}
\end{lemma}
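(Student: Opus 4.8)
The plan is to read Lemma~\ref{deltaobeta'} as a formal consequence of the identity \eqref{deltabetaT}, which already gives $\delta(\beta(T)) = \frac12\, z\wedge_{\mathbb{Z}} Hz$, together with the definition \eqref{Omega} of the form $\Omega^*$. Both $z\wedge_{\mathbb{Z}} Hz$ and $z\wedge_{\Omega^*} z$ are built from the very same data: the element $z=\sum z_{ij}\, f_{ij}$, the scalar product $\langle\cdot,\cdot\rangle$ for which $\{f_{ij}\}$ is orthonormal, and the endomorphism $H$; and $\Omega^*$ is \emph{by definition} $\langle H\cdot,\cdot\rangle$. So the entire content of the statement is the formal identity $z\wedge_{\Omega^*} z = z\wedge_{\mathbb{Z}} Hz$ (up to the sign fixed by the chosen orientation of $\Omega^*$), which I would verify directly in the basis $\{f_{ij}\}$.

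First I would expand $z\wedge_{\Omega^*} z$ through the bilinear product of Section~\ref{R}, namely $(a\otimes u)\wedge_{\Omega^*}(b\otimes v)=\Omega^*(u,v)\,(a\wedge b)$, to get $z\wedge_{\Omega^*} z = \sum_{\alpha,\beta}\langle H f_\alpha, f_\beta\rangle\, z_\alpha\wedge_{\mathbb{Z}} z_\beta$. Since $\{f_{ij}\}$ is orthonormal and $H$ acts at each vertex $i$ by $H(f_{ij})=f_{ik}$, $H(f_{ik})=-f_{ij}$, the only surviving pairs are the two mixed ones at each vertex, so the sum collapses to $\sum_i\big(z_{ij}\wedge_{\mathbb{Z}} z_{ik} - z_{ik}\wedge_{\mathbb{Z}} z_{ij}\big)=2\sum_i z_{ij}\wedge_{\mathbb{Z}} z_{ik}$. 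In parallel I would recompute the right-hand side of \eqref{deltabetaT}: contracting the $J_T^*$-factors of $z$ and $Hz$ through $\langle\cdot,\cdot\rangle$ and inserting the coordinate formulas $(Hz)_{f_{ij}}=1/z_{ik}$, $(Hz)_{f_{ik}}=z_{ij}$ gives $z\wedge_{\mathbb{Z}} Hz = \sum_i\big(z_{ij}\wedge_{\mathbb{Z}} z_{ik}^{-1} + z_{ik}\wedge_{\mathbb{Z}} z_{ij}\big)$, which also reduces to a multiple of $\sum_i z_{ij}\wedge_{\mathbb{Z}} z_{ik}$. Comparing the two reductions then yields \eqref{deltaobeta2}.

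To keep the computation tethered to the dilogarithm I would also re-derive the per-vertex terms directly from $\beta(T)$: relation \eqref{Rel2}, in the form $1-z_{ij}=1/z_{ik}$, turns each generator into $\delta([z_{ij}])=z_{ij}\wedge_{\mathbb{Z}}(1-z_{ij})=z_{ij}\wedge_{\mathbb{Z}} z_{ik}^{-1}=-\,z_{ij}\wedge_{\mathbb{Z}} z_{ik}$, so that the four dilogarithm contributions of $\beta(T)=[z_{12}]+[z_{21}]+[z_{34}]+[z_{43}]$ reassemble precisely into the same sum over the four vertices. This makes transparent why only the two outgoing edges at each vertex enter, and why $H$ (the quarter-turn with $H^2=-\mathrm{id}$) is exactly the right operator to encode the pairing $z_{ij}\mapsto 1-z_{ij}$.

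I expect the one genuine difficulty to be sign discipline. Three antisymmetries interact — the skew-symmetry of $\wedge_{\mathbb{Z}}$ on $k^\times$, the skew-symmetry of $\Omega^*$, and $H^2=-\mathrm{id}$ — and the two expressions $z\wedge_{\mathbb{Z}} Hz$ and $z\wedge_{\Omega^*} z=(Hz)\wedge_{\mathbb{Z}} z$ differ exactly by the transposition that the symmetry of $\langle\cdot,\cdot\rangle$ introduces. I would therefore fix the conventions unambiguously at the outset (which of the two edges at a vertex is labelled $f_{ij}$ versus $f_{ik}$, and the sign placed in $\Omega^*=\langle H\cdot,\cdot\rangle$) so that the resulting skew form carries the sign demanded by \eqref{deltaobeta2}, and I would pin that choice down against the hyperbolic model of Section~\ref{hyp}: there $\beta(T)=4[t]$, so $\delta(\beta(T))=4\,t\wedge_{\mathbb{Z}}(1-t)$, and checking that $\frac12 z\wedge_{\Omega^*} z$ reproduces this value (with the companion edge coordinates $z_{ik}$ equal to $1/(1-t)$) fixes every sign once and for all.
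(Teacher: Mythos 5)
Your proposal is correct and follows exactly the paper's own route: the paper gives no proof beyond the sentence ``The following lemma now follows from \eqref{deltabetaT} and \eqref{Omega}'', and your basis computation in $\{f_{ij}\}$ (together with the observation that $\delta([z_{ij}])=-z_{ij}\wedge_{\mathbb{Z}}z_{ik}$ via \eqref{Rel2}) is precisely the verification being left implicit there. Your flagged concern about the sign, i.e.\ that $z\wedge_{\mathbb{Z}}Hz$ and $z\wedge_{\Omega^*}z=(Hz)\wedge_{\mathbb{Z}}z$ differ by a transposition, is the one genuinely delicate point, and resolving it by fixing conventions against the hyperbolic model (and against Lemma \ref{deltaobeta} via \eqref{az}) is the right way to pin it down.
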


\subsection{Relation between $a$ and $z$-coordinates}
Let 
$$p : J_T^2 \rightarrow (J_T^2)^*$$
be the homomorphism $v \mapsto \Omega^2 (v , \cdot )$. On the basis $(e_{\alpha})$ and its dual $(e_{\alpha}^*)$, we can write
$$p (e_{\alpha}) = \sum_{\beta} \varepsilon_{\alpha \beta} e_{\beta}^*.$$
We define accordingly the dual map
$$p^* : \mathrm{Hom}((J_T^2)^*,k^{\times}) \rightarrow \mathrm{Hom}(J_T^2,k^{\times}).$$
Observe that if $a \in k^{\times} \otimes_{\bZ} J_T^2$ and $z \in k^{\times} \otimes_{\bZ} (J_T^2)^*$ are the elements associated to the $a$ and $z$-coordinates of $T$ then:
$$p^* (a) = z \quad \mbox{in } k^{\times} \otimes J_T^* \left[ \frac12 \right].$$
Indeed,
$$p^* (a)(e_{\alpha}) = a(p(e_{\alpha})) = a (\sum_{\beta} \epsilon_{\alpha  \beta} e_{\beta}^*)
= \prod_{\beta} a_{\beta}^{\varepsilon_{\alpha \beta}}.
$$
In particular, we recuperate the formula
$$
z_{ij} = \frac{a_{ik} a_{ijl}}{a_{il} a_{ijk}} .
$$
Note however that in our conventions the coordinate $a_{ijl}$ should be written $a_{ilj}$. There is therefore a sign missing here and $p^*(a)=z$ only holds modulo $2$-torsion.
  
The image $p(J_T^2) \subset (J_T^2)^*$ coincides with $J_T^*$ and one easily checks 
that $p^* (\Omega^* ) = \Omega^2$. It
then follows from the following lemma that
\begin{equation} \label{az}
a \wedge_{\Omega^2} a = z\wedge_{\Omega^*} z
\end{equation} 
which explains the coincidence of lemma \ref{deltaobeta} and lemma \ref{deltaobeta'}.

\begin{lemma} \label{lemfacile} 
If $\phi: V\rightarrow W$ is a homomorphism of $\bZ$-modules equipped with bilinear forms $B$ and $b$
such that $\phi^*(b)=B$ then the induced map
$$
\phi: V_k\rightarrow  W_k
$$
satisfies
$$
\phi^*({\wedge_b})={\wedge_B}
$$
\end{lemma}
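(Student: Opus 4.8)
The plan is to recognize this as the functoriality of the construction $B \mapsto {\wedge_B}$ under pullback, and to verify it by a check on generators. First I would unwind the three pieces of notation. The induced map $\phi : V_k \to W_k$ is by definition $\mathrm{id}_{k^{\times}} \otimes \phi$, so on a generator it acts by $z \otimes v \mapsto z \otimes \phi(v)$. The hypothesis $\phi^*(b) = B$ means precisely that $b(\phi(v), \phi(v')) = B(v, v')$ for all $v, v' \in V$. Finally, $\phi^*({\wedge_b})$ denotes the biadditive map $(\xi, \xi') \mapsto \phi(\xi) \wedge_b \phi(\xi')$ on $V_k \times V_k$, and the claim is that this equals ${\wedge_B}$.

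Both ${\wedge_B}$ and $\phi^*({\wedge_b})$ are $\bZ$-biadditive maps from $V_k \times V_k$ to $k^{\times} \wedge_{\bZ} k^{\times}$, the former by the defining construction and the latter because $\phi$ is a homomorphism and $\wedge_b$ is biadditive. Two biadditive maps out of $V_k \times V_k = (k^{\times} \otimes_{\bZ} V) \times (k^{\times} \otimes_{\bZ} V)$ coincide as soon as they agree on pairs of generators $z_1 \otimes v_1$ and $z_2 \otimes v_2$, so it suffices to check the equality there.

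Evaluating the two sides on such a pair, the defining formula for $\wedge_b$ gives
\[
\phi(z_1 \otimes v_1) \wedge_b \phi(z_2 \otimes v_2) = (z_1 \otimes \phi(v_1)) \wedge_b (z_2 \otimes \phi(v_2)) = b(\phi(v_1), \phi(v_2))\, (z_1 \wedge z_2),
\]
while the defining formula for $\wedge_B$ gives
\[
(z_1 \otimes v_1) \wedge_B (z_2 \otimes v_2) = B(v_1, v_2)\, (z_1 \wedge z_2).
\]
The pullback hypothesis yields $b(\phi(v_1), \phi(v_2)) = B(v_1, v_2)$, so the two integer multiples of $z_1 \wedge z_2$ agree, and the check is complete.

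I do not expect a genuine obstacle here: the content is purely formal. The only point meriting a word of care is the legitimacy of reducing to generators, which rests on the biadditivity of both maps; once that is noted, the verification is immediate from the hypothesis $\phi^*(b) = B$.
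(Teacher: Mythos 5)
Your proof is correct and matches the paper's approach: the paper simply states that the lemma is ``a simple consequence of the definitions,'' and your verification on generators, justified by biadditivity, is exactly the unwinding of definitions that this remark leaves implicit.
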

\begin{proof} This is a simple consequence of the definitions.
\end{proof}

\subsection{} Note that the form $\Omega^2$ induces a -- now non-degenerate -- symplectic form $\Omega$ on $J_T$. This yields a canonical identification between $J_T$ and $J_T^*$; the form $\Omega^*$ is the corresponding symplectic form. We may therefore as well work with $(J_T , \Omega)$ as with $(J_T^* , \Omega^*)$. The bilinear form $\Omega$ on $J_T$ is characterized as 
the non-singular skew-symmetric form given by
$$\Omega (e_\alpha , e_\beta ) = \varepsilon_{\alpha\beta}.$$

\section{Decoration of a tetrahedra complex and its holonomy}\label{decoration2}

In the previous sections we defined coordinates for a tetrahedron of flags and affine flags and defined its volume in $\mathcal{P} (\bC)$. We study here how one may decorate a complex of tetrahedra with these coordinates, compute the holonomy of its fondamental group. We also investigate the invariant $\beta$ (in the pre-Bloch group) of the decorated complex. We eventually state the main theorem of the paper, theorem \ref{thm:kabaya}, which computes $\delta(\beta)$ in terms of the holonomy.

\subsection{Quasi-simplicial complex and its decorations}

Let us begin with the definition of a quasi-simplicial complex (see e.g. \cite{Neuman}):
A {\it quasi-simplicial complex} $K$ is a cell complex whose cells are simplices with 
injective simplicial attaching maps, but no requirement that closed simplices embed in $|K|$ -- the 
underlying topological space. 
A {\it tetrahedra complex} is a quasi-simplicial complex of dimension $3$.

\subsection{} From now on we let $K$ be a tetrahedra complex.
The {\it (open) star} of a vertex $v \in K^{(0)}$ is the union of all
the open simplices that have $v$ as a vertex. It is an open neighborhood of $v$ and is the open 
cone on a simplicial complex $L_v$ called the {\it link} of $v$.

A {\it quasi-simplicial $3$-manifold} is a compact tetrahedra complex $K$ such
that $|K|-|K^{(0)}|$ is a $3$-manifold (with boundary). By an orientation of $K$ we mean an orientation
of this manifold. 
A {\it $3$-cycle} is a closed quasi-simplicial $3$-manifold.

\subsection{}A quasi-simplicial $3$-manifold is topologically a manifold except perhaps 
for the finitely many singular points $v \in |K^{(0)}|$ where the local structure is that of a cone on 
$|L_v|$ -- a compact connected surface (with boundary). We will soon require that for each 
vertex $v \in K^{(0)}$, $|L_v|$ is homeomorphic to either a sphere, a torus or an annulus. Let $K_s^{(0)}$, $K_t^{(0)}$ and 
$K_a^{(0)}$ the corresponding subsets of vertices. We note that $|K|-|K_t^{(0)} \cup K_a^{(0)}|$ 
is an (open) $3$-manifold with boundary that retracts onto a compact $3$-manifold with boundary $M$.
Note that $\partial M$ is the disjoint union $T_1 \cup \ldots  \cup T_{\tau} \cup S_1 \cup \ldots \cup S_{\sigma}$ where each $T_i$ is a torus and each $S_i$ a surface of genus $g_i \geq 2$. Moreover:
each $T_i$ corresponds to a vertex in $K_t^{(0)}$ and each $S_i$ contains at least one simple 
closed essential curve each corresponding to a vertex in $K_a^{(0)}$, see figure \ref{Trig}. 

Given such a compact oriented $3$-manifold with boundary $M$. We call a quasi-simplicial $3$-manifold as above a {\it triangulation} of $M$. 

\begin{figure}[ht]      
\begin{center}
\includegraphics[scale=.3]{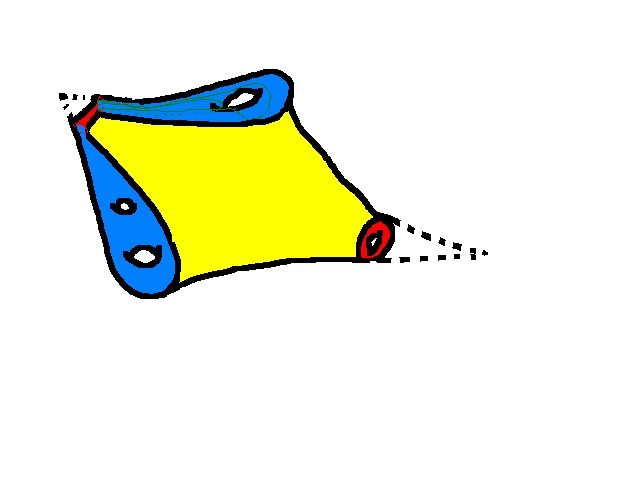}
\caption{The retraction of a quasi-simplicial $3$-manifold onto a compact $3$-manifold with boundary} \label{Trig}
\end{center}
\end{figure}

A {\it decoration} of a tetrahedra complex is an incarnation of this complex in our spaces of flags 
or affine flags: 

\subsection{}
A {\it parabolic decoration} of the tetrahedra complex is the data of a flag for each vertex (equivalently a map from the $0$-skeleton of the complex to $\Fl$) such that, for each tetrahedron of the complex, the corresponding tetrahedron of flags is in generic position.
Similarily, a {\it unipotent decoration} is the data of an affine flag for each vertex with the genericity condition.

Let us make two comments on these definitions. First, any parabolic decoration -- 
together with an ordering of the vertices of each $3$-simplex -- equip each tetrahedron with a set of coordinates as defined in section \ref{ss:coord-para}. Second, an unipotent decoration
induces a parabolic decoration via the canonical projection $\AFl\to \Fl$, so we get these coordinates, as well as a set of affine coordinates (see section \ref{ss:coord-uni}).

\subsection{}\label{ss:neumanncomplex}
Neumann \cite[\S 4]{NeumannGT} has proven that any element of $H_3 (\PGL_3 (\mathbb{C}))$ can be represented by an oriented $3$-cycle $K$ together with an ordering of the vertices of each $3$-simplex of $K$ so that these orderings agree on common faces, and a decoration of $K$.

In otherwords: Any class $\alpha \in H_3 (\PGL_3 (\mathbb{C}))$ can be represented as $f_* [K]$ where $K$ is a quasi-simplicial complex such that $|K|-|K^{(0)}|$ is an oriented $3$-manifold, $[K] \in H_3 (|K|)$ is its fundamental class and $f:|K| \rightarrow \mathrm{B} \PGL_3 (\mathbb{C})$ is some map. 

This motivates the study of decorated $3$-cycles. From now on we fix $K$ a decorated oriented quasi-simplicial $3$-manifold together with an ordering of the vertices of each $3$-simplex of $K$. Let $N$
be the number of tetrahedra of $K$ and denote by $T_{\nu}$, $\nu =1 , \ldots , N$, these tetrahedra. We let 
$z_{ij}(T_{\nu})$ be the corresponding $z$-coordinates. We now describe the consistency relation on this coordinate in order to be able to glue back together the decorated tetrahedron

\subsection{Consistency relations} (cf. \cite{falbel}) Let $F$ be an internal face ($2$-dim cell) of $K$ and $T$, $T'$ be the tetrahedra attached to $F$. In order to fix notations, suppose that the vertex of $T$ are $1,2,3,4$ and that the face $F$ is $123$. Let $4'$ be the remaining vertex of $T'$. The face $F$ inherits two $3$-ratio from the decoration: first $z_{123}(T)$ as a face of $T$ and second $z_{132}(T')$ as a face of $T'$. But considering $F$ to be attached to $T$ or $T'$ only changes its orientation, not the flags at its vertex. So these two $3$-ratios are inverse. Hence we get the:

\medskip

\emph{(Face relation)} 
Let $T$ and $T'$ be two tetrahedra  of $K$ with a common face $(ijk)$ (oriented as a boundary of $T$), then $z_{ijk}(T)=\frac{1}{z_{ikj}(T')}$.

\medskip

\begin{figure}[ht]      
\begin{center}
\includegraphics[scale=.3]{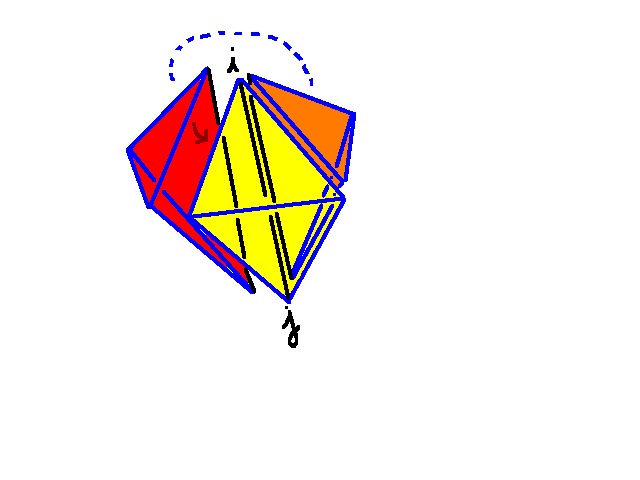}
\caption{tetrahedra sharing a common edge} \label{edge}
\end{center}
\end{figure}

We should add another compatibility condition to ensure that the edges are not singularities: we are going to compute the holonomy of a path in a decorated complex and we want it to be invariant by crossing the edges. One way to state the condition is the following one: let $T_1, \ldots , T_{\nu}$ be a sequence of tetrahedra sharing a common edge $ij$ and such that $ij$ is an inner edge of the subcomplex composed by the $T_{\mu}$'s (they are making looping around the edge, see figure \ref{edge}). Then we ask:

\medskip

\emph{(Edge condition)} $z_{ij}(T_1) \cdots z_{ij}(T_{\nu})=z_{ji}(T_1) \cdots z_{ji}(T_{\nu})=1$

\medskip

\subsection{Holonomy of a decoration}\label{holonomydecoration}

Recall from \S \ref{normal} that, once we have a configuration of $3$ generic flags $([x_i ] , [f_i ] )_{1 \leq i \leq 3} \in \C^r_2(\Fl)$
with triple ratio $X$, one defined a projective
coordinate system of $\P (\bC^3)$ as the one where the point $x_1$ has coordinates $[1:0 : 0]^t$, 
the point $x_2$ has coordinates $[0:0:1]^t$, the point $x_3$ has coordinates $[1:-1:1]^t$ and the intersection of $\mathrm {Ker} f_1$ and $\mathrm {Ker} f_2$ has coordinates $[0 : 1 : 0]^t$. The line $\mathrm {Ker} f_3$ then has coordinates $[X :  X+1 : 1]$. 

Given an oriented face we therefore get $3$ projective basis associated to the triples $(123)$, $(231)$ and $(312)$. 
The cyclic permutation of the flags induces the coordinate change given
by the matrix 
$$T(X) =  \left( 
\begin{matrix}
X & X+1 & 1 \\
-X & -X & 0 \\
X & 0 &  0
\end{matrix} \right).$$
Namely: if a point $p$ has coordinates $[u : v : w]^t$ in the basis associated to the triple $(123)$ it has coordinates $T(X) [u:v:w]^t$ in the basis 
associated to $(231)$.
 
\begin{lemma} \label{lem:hol}
If we have a tetrahedron of flags $(ijkl)$ with its $z$-coordinates, then the coordinate system
related to the triple $(ijk)$ is obtained from the coordinate system related to the triple $(ijl)$ 
by the coordinate change given by the matrix
$$E(z_{ij} , z_{ji}) = \left(
\begin{matrix}
z_{ji}^{-1} & 0 & 0\\
0 & 1 & 0 \\
0 & 0 & z_{ij} 
\end{matrix} \right).$$
\end{lemma}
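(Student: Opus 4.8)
The plan is to reduce to a single labelled case and then compute the change of coordinates explicitly. Since the two frames, the transition between them, and the matrix $E(z_{ij},z_{ji})$ are all equivariant for the $\SL(3)$-action and depend only on the labelling of the vertices, it suffices by relabelling the vertices to treat $(i,j,k,l)=(1,2,3,4)$, comparing the frame attached to the triple $(123)$ with the one attached to $(124)$; the general case then follows verbatim.

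The first observation is that the two frames share three of their four reference points. By the normalisation recalled in \S\ref{normal}, in both the $(123)$-frame and the $(124)$-frame one has $x_1\mapsto[1:0:0]^t$, $x_2\mapsto[0:0:1]^t$, and $\mathrm{Ker}(f_1)\cap\mathrm{Ker}(f_2)\mapsto[0:1:0]^t$; indeed the flags at $1$ and $2$ enter the definition of each frame only through the intersection of their kernels. A projective transformation fixing the three coordinate points $[1:0:0]^t$, $[0:1:0]^t$, $[0:0:1]^t$ is represented by a diagonal matrix, so the change of coordinates between the two frames is diagonal, and is determined up to an irrelevant global scalar by two ratios. It remains only to locate one further point in each frame.

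To do so I would invoke Proposition \ref{parameters} to replace $T$ by its standard representative, with $x_1=(1,0,0)$, $x_2=(0,1,0)$, $x_3=(0,0,1)$, $x_4=(1,1,1)$ and the displayed linear forms $f_i$, for which $z_{12}=\tfrac{1}{1-z_1}$ and $z_{21}=1-z_2$. Writing down the two linear maps realising the $(123)$- and $(124)$-frames (each sending $x_1$, $\mathrm{Ker}(f_1)\cap\mathrm{Ker}(f_2)$, $x_2$ and the respective third vertex to the normalising points $[1:0:0]^t,[0:1:0]^t,[0:0:1]^t,[1:-1:1]^t$) and inverting, one computes that $x_4$ has coordinates $[-z_{21}:1:-z_{12}^{-1}]^t$ in the $(123)$-frame, while $x_3$ has coordinates $[z_{21}^{-1}:-1:z_{12}]^t$ in the $(124)$-frame. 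The same two expressions can be obtained intrinsically, without passing to the standard representative, by reading $z_{12}$ and $z_{21}$ off as the cross-ratios of the pencils of lines through $x_1$ and through $x_2$ and applying the cross-ratio convention of the footnote; this route makes the appearance of exactly $z_{12}$ and $z_{21}$ transparent.

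Finally I would match the two computations: the diagonal transition expressing $(124)$-coordinates through $(123)$-coordinates must carry the $(123)$-coordinates $[1:-1:1]^t$ of $x_3$ to its $(124)$-coordinates $[z_{21}^{-1}:-1:z_{12}]^t$, which forces the diagonal entries to be $(z_{21}^{-1},1,z_{12})$, that is, the matrix $E(z_{12},z_{21})=\mathrm{diag}(z_{21}^{-1},1,z_{12})$; evaluating instead on $x_4$ (whose two sets of coordinates are already known) furnishes a consistency check. The only genuine difficulty here is bookkeeping rather than mathematics: one must keep careful track of which triple is source and which is target, and of whether one transforms coordinate vectors or the reference frame itself (the active/passive convention), so as to land on $E$ and not on $E^{-1}$, and one must apply the cross-ratio convention in the correct order so as not to replace each $z_{ij}$ by its inverse. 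Once these conventions are fixed, the verification is the direct calculation sketched above.
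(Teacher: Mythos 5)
Your argument is correct and is essentially the paper's own proof: both establish that the transition matrix is diagonal because the two frames share the normalisation of $[x_i]$, $[x_j]$ and $\mathrm{Ker}(f_i)\cap\mathrm{Ker}(f_j)$, and both then pin down the diagonal entries by locating the fourth vertex in one frame (the paper computes that $x_l$ has coordinates $[z_{ji}:-1:z_{ij}^{-1}]^t$ in the $(ijk)$-system, which agrees with your formula). Your additional computation of $x_k$ in the $(ijl)$-frame is just the consistency check that the paper omits.
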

\noindent
Beware that the orientation
of $(ijl)$ is not the one given by the tetrahedron.
\begin{proof} The matrix we are looking for fixes the flags $([x_1] , [f_1])$ and $([x_2] , [f_2])$ corresponding to the vertex $i$ and $j$. 
In particular it should be diagonal. Finally it should send $[x_4]$ to $[x_3]$.
But in the coordinate system associated to the triple $(ijk)$ the point $x_4$ in
the flag $([x_4] , [f_4])$ corresponding to the vertex $l$ has coordinates:
$$x_4 = [z_{ji} : -1 : z_{ij}^{-1} ]^t .$$
This proves the lemma.
\end{proof}

\subsection{} From this we can explicitly compute the holonomy of a path in the complex. For that let us put  three points in each face near the vertices denoting by $(ijk)$ the point in the face $ijk$ near $i$. As we have said before, each of these points corresponds to a projective basis of $\bC^3$. Each path can be deformed so that it decomposes in two types of steps (see figure \ref{fig:holon1}):
\begin{enumerate} 
\item a path  inside an oriented face $ijk$ from $(ijk)$ to $(jki)$,
\item a path through a tetrahedron $ijkl$ from $(ijk)$ to $(ijl)$ (i.e. turning left around the edge $ij$ oriented from $j$ to $i$).
\end{enumerate}
\begin{figure}[ht]      
\begin{center}
\def\svgwidth{12cm}
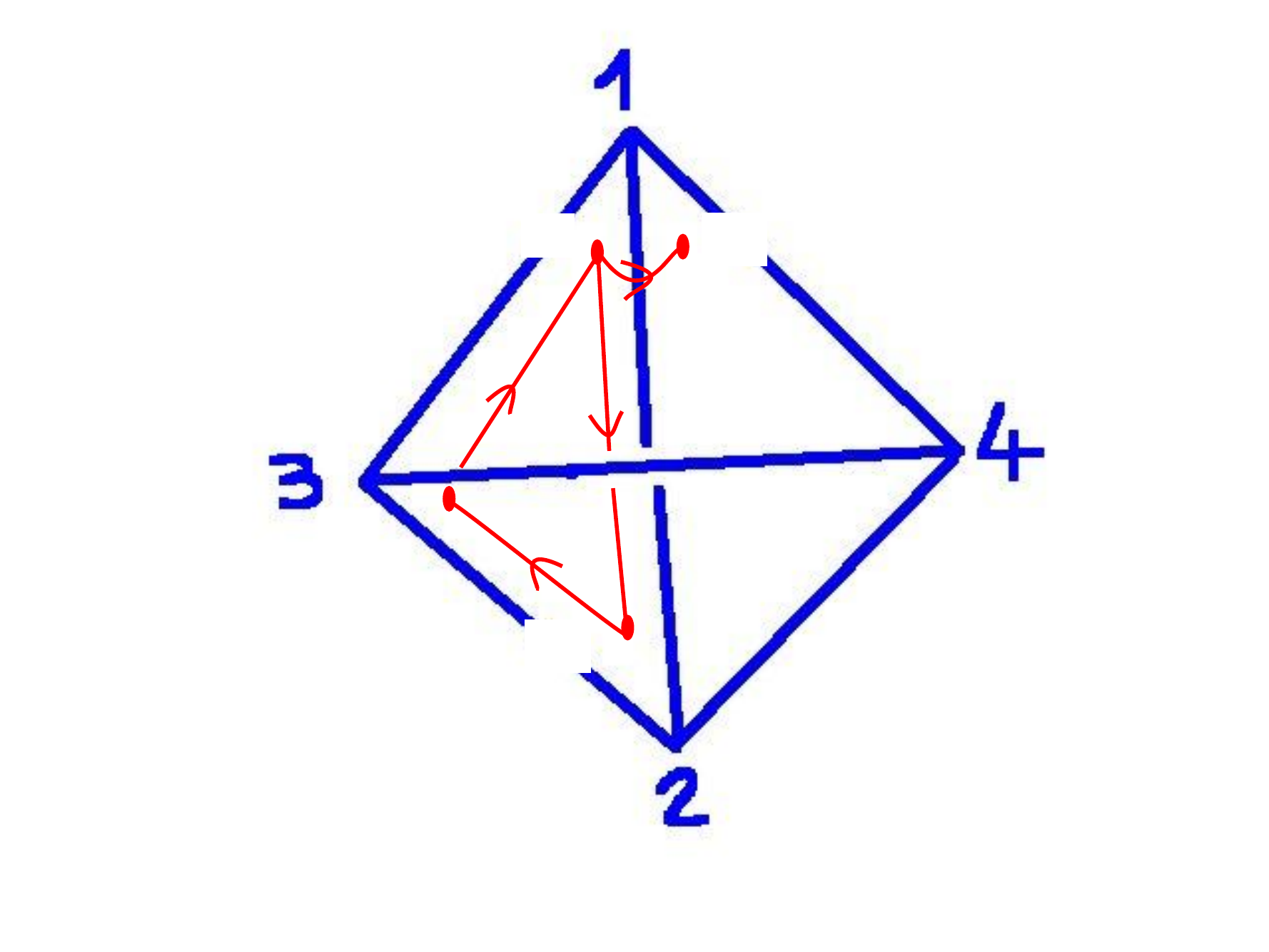
\caption{Two elementary steps for computing holonomy} \label{fig:holon1}
\end{center}
\end{figure}

Now the holonomy of the path is the coordinate change matrix so that: in case $1$, you have to left multiply by the matrix $T(z_{ijk})$ and in case $2$ by the matrix $E(z_{ij} , z_{ji})$. 

\subsection{} In particular the holonomy of the path turning left around an edge, i.e. the path $(ijk)\rightarrow(ijl)$, is given by 
\begin{equation} \label{Left}
L_{ij} = E(z_{ij} , z_{ji})= \left(
\begin{matrix}
z_{ji}^{-1} & 0 & 0\\
0 & 1 & 0 \\
0 & 0 & z_{ij} 
\end{matrix} \right).
\end{equation}
As an example which we will use latter on, one may also compute the holonomy of the path turning right around an edge, i.e. the path $(ilj)\rightarrow(ikj)$.
We consider the sequence of coordinate changes (see figure \ref{fig:holon2} for the path going from $(231)$ to $(241)$):
$$(ilj) \rightarrow (lji) \rightarrow (jil) \rightarrow (jik) \rightarrow (ikj).$$

\begin{figure}[ht]      
\begin{center}
\def\svgwidth{12cm}
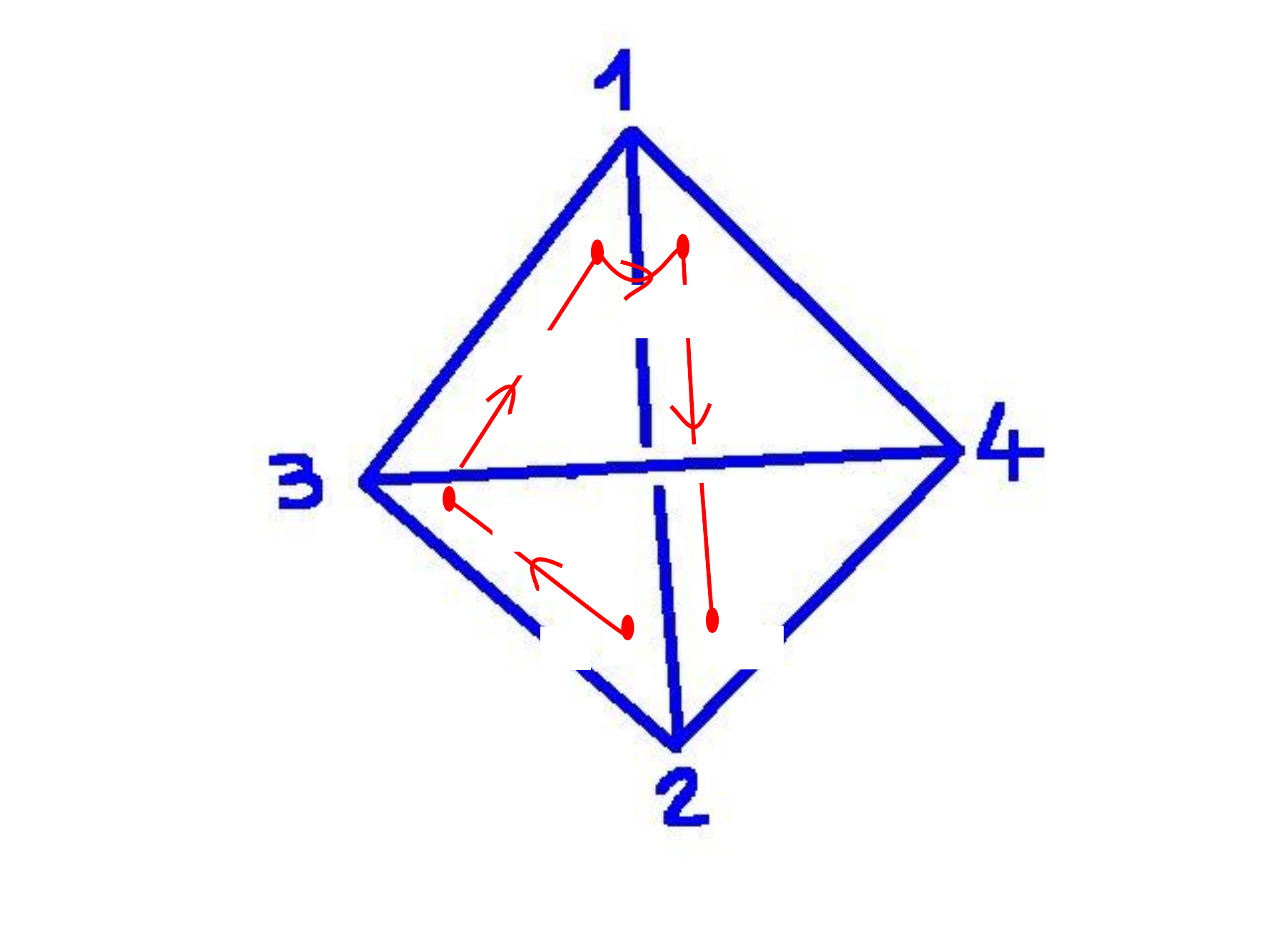
\caption{Turning right} \label{fig:holon2}
\end{center}
\end{figure}

The first two operations are cyclic permutations both given by the matrix $T(z_{ilj})$. It follows from lemma \ref{lem:hol} that the third is given by the matrix $E(z_{ji} , z_{ij})$. Finally the last operation
is again a cyclic permutation given by the matrix $T(z_{ikj})$. The coordinate change from the basis $(ilj)$ to $(ikj)$ is therefore given by 
$$T(z_{ikj}) E(z_{ji} , z_{ij}) T (z_{ilj})^2 = \begin{pmatrix}
           z_{ji} z_{ilj} & \star & \star  \\ & z_{ikj} & \star \\ & & \frac{z_{ikj}}{z_{ij}} \\
           \end{pmatrix}$$
Using $z_{ikj}=\frac 1{z_{ijk}}$, we get that the holonomy matrix, in $\PGL (3, \bC)$, of the path turning right around an edge $ij$ is 
\begin{equation} \label{Right}
R_{ij} = \begin{pmatrix}
           z_{ji}z_{ilj} z_{ijk}& \star & \star  \\ & 1 & \star \\ && \frac{1}{z_{ij}} \\
           \end{pmatrix} .
\end{equation}

\begin{remark}
\rm Beware that $L_{ij} R_{ij}$ is not the identity in $\PGL (3, \bC)$. This is due to the choices of orientations of the faces which prevents
$L_{ij} R_{ij}$ to be a matrix of coordinate change. When computing the holonomy of a path we therefore have to avoid backtracking.
\end{remark}

\subsection{Coordinates for the boundary of the complex} 
The boundary $\Sigma$ of the complex $K$ is a triangulated punctured surface. As in section \ref{decoration} and in \cite{FG} we associate to $\Sigma$ the set $I_{\Sigma}$ of the vertices of the (red) arrows of the triangulation of $\Sigma$ obtained using figure \ref{FG}. As in the preceeding section we set  $J_{\Sigma}^2 = \bZ^{I_{\Sigma}}$ 
and consider the skew-symmetric form $\Omega_{\Sigma}^2$ on $J_{\Sigma}^2$ introduced by Fock and Goncharov in \cite{FG}. Here again we let $J_{\Sigma}^* \subset (J_{\Sigma}^2)^*$ be the image of $J_{\Sigma}^2$ by the linear map $v \mapsto \Omega^2_{\Sigma} ( v, \cdot )$. 

\subsection{} \label{ref:512} The decoration of $K$ yields a decoration of the punctures of $\Sigma$ by flags, as in \cite{FG} and hence a point in $J_\Sigma^*$. Here is a more descriptive point of view, using the holonomy of the decoration of $K$: it provides $\Sigma$ with coordinates associated to each $\alpha \in I_{\Sigma}$. To each face we associate the face $z$-coordinate of the corresponding tetrahedra of $K$. To each oriented edge $ij$ of the triangulation of $\Sigma$ 
we associate the last eigenvalue of the holonomy of the path 
joining the two adjacent faces by turning left around $ij$ in $K$. It is equal to the product
$z_{ij} (T_1) \cdots z_{ij} (T_{\nu})$ where $T_1, \ldots , T_{\nu}$ is the sequence of tetrahedra sharing 
$ij$ as a common edge. 

We denote by $z_\Sigma$ the above defined element of $k^{\times} \otimes_{\bZ} J_{\Sigma}^* \left[ \frac12 \right]$. 

Note that when $K$ has a unipotent decoration, then the punctures are decorated by affine flags. We immediately get an element $a_{\Sigma} \in 
k^{\times} \otimes_{\bZ} J_{\Sigma}^2$ which projects onto $z_{\Sigma}$ in $k^{\times} \otimes_{\bZ} J_{\Sigma}^* \left[ \frac12 \right]$. Here again we have:
$$a_{\Sigma} \wedge_{\Omega^2_{\Sigma}} a_{\Sigma} = z_{\Sigma} \wedge_{\Omega^*_{\Sigma}} z_{\Sigma}.$$
The first expression is the $W$-element $W(\Sigma)$ associated to the decorated $\Sigma$ by Fock and Goncharov.

\subsection{Decoration and the pre-Bloch group}
Let $k$ be a field containing all the $z$-coordinates of the tetrahedra $T_{\nu}$, $\nu = 1 , \ldots , N$.
To any of these (ordered) tetrahedra we have associated an element $\beta (T_{\nu})\in \mathcal{P} (k)$. Set:
$$\beta (K) = \sum_{\nu} \beta (T_{\nu}) \in \mathcal{P} (k).$$

From now on we assume that for each vertex $v \in K^{(0)}$, $|L_v|$ is homeomorphic to either a torus or an annulus. We fix symplectic bases $(a_s , b_s )$ for each of the
tori components and we fix $c_r$ (resp. $d_r$) a generator of each homology group $H_1 (L_r)$ (resp. $H_1 (L_r , \partial L_r)$) where
the $L_r$'s are the annuli boundary components. We furthermore assume that the algebraic intersection number $\iota (c_r , d_r) = 1$. 

Each one of these homology elements may be represented as a path as in section \ref{holonomydecoration} which remains close to the associated vertex. So we may compute its holonomy using only matrices $L_{ij}$ and $R_{ij}$ : we will get an upper triangular matrix. More conceptually, the path is looping around a vertex decorated by a flag, so must preserve the flag. So it may be conjugated to an upper traingular matrix. Recall also that the diagonal part of a triangular matrix is invariant under conjugation by an upper-triangular matrix.

The following theorem computes $\delta (\beta (K))$ in terms of the {\it holonomy elements} $A_s$, $B_s$, $C_r$, $D_r$ and $A_s^*$, $B_s^*$, $C_r^*$, $D_r^*$ such that the holonomy matrices associated to $a_s$, $b_s$, $c_r$, $d_r$ have the following form in a basis adapted to the flag decorating the link (see also \S \ref{ss:linearizationholonomy} for a more explicit description):
$$\begin{pmatrix} \frac{1}{A_s^*} & * & *\\ 0 & 1 & *\\0&0&A_s\end{pmatrix}.$$

\begin{theorem} \label{thm:kabaya}
The invariant $\delta (\beta (K))$ only depends on the boundary coordinates $z_{\Sigma}$, $A_s$, $B_s$, $C_r$, $D_r$ and $A_s^*$, $B_s^*$, $C_r^*$, $D_r^*$. Moreover: 
\begin{enumerate}
\item If the decoration of $K$ is unipotent then $2 \delta (\beta (K)) = z_{\Sigma} \wedge_{\Omega_{\Sigma}^*} z_{\Sigma}$.
\item If $K$ is closed, i.e. $\Sigma = \emptyset$, and each link is a torus, we have the following formula for $3\delta (\beta (K))$:
$$ \sum_s \left( 2 A_s \wedge_{\bZ} B_s + 2 A_s^* \wedge_{\bZ} B_s^*+ A_s^* \wedge_{\bZ} B_s + A_s \wedge_{\bZ} B_s^* \right).
$$
\end{enumerate}\end{theorem}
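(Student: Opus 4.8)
The starting point is that $\beta$ is additive over tetrahedra, so that
\[
\delta(\beta(K)) = \sum_{\nu=1}^N \delta(\beta(T_\nu)),
\]
and by Lemmas \ref{deltaobeta} and \ref{deltaobeta'} each summand equals $\frac12\, a_\nu\wedge_{\Omega^2} a_\nu = \frac12\, z_\nu \wedge_{\Omega^*} z_\nu$, where $a_\nu$ (resp. $z_\nu$) is the $a$- (resp. $z$-) vector of $T_\nu$. The whole plan is to assemble these local quadratic expressions on the orthogonal direct sum of the local symplectic modules $\bigoplus_\nu J_{T_\nu}^2$ (resp. $\bigoplus_\nu J_{T_\nu}^*$), and then to perform a symplectic reduction dictated by the face and edge consistency relations, following the homological scheme of Neumann \cite{Neuman} and the gluing formalism of Fock--Goncharov \cite{FG}. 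The content of the theorem is that, after this reduction, all interior parameters drop out and only the boundary data survive; this is the qualitative first assertion, and parts (1) and (2) are its explicit evaluations.

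For the unipotent case, part (1), the reduction is transparent because the $a$-coordinates $a_{ij}=f_i(x_j)$ and $a_{ijk}=\det(x_i,x_j,x_k)$ are \emph{globally} defined functions of the affine flags, so no edge relation has to be imposed by hand. Here $2\delta(\beta(K)) = \sum_\nu a_\nu\wedge_{\Omega^2} a_\nu$, and I would expand each summand face by face as in the proof of Lemma \ref{deltaobeta}, i.e. through the quantities $W_{ijk}$ of \eqref{Wface}. An interior face occurs twice, as $(ijk)$ in one tetrahedron and as $(ikj)$ in the adjacent one: a direct check shows that the three ``edge'' terms of $W_{ijk}$ and $W_{ikj}$ cancel exactly, while the two ``face'' terms cancel up to the $2$-torsion class $(-1)\wedge(\cdots)$ produced by $a_{ikj}=-a_{ijk}$. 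Discarding this torsion, the interior contributions vanish and the boundary faces reassemble into $a_\Sigma\wedge_{\Omega^2_\Sigma} a_\Sigma = z_\Sigma\wedge_{\Omega^*_\Sigma} z_\Sigma$ exactly as recorded in \S\ref{ref:512}, which is part (1).

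In the general case only the $z$-coordinates are available, so I would write $2\delta(\beta(K)) = \sum_\nu z_\nu\wedge_{\Omega^*} z_\nu$ and carry out the reduction purely on $\bigoplus_\nu J_{T_\nu}^*$. The face relation \eqref{Rel1} and the edge condition of \S\ref{holonomydecoration} cut out the admissible subspace of gluings and, at the same time, force the holonomy around each link to be upper triangular with diagonal $(1/A_s^*,1,A_s)$ and so on, read off via \eqref{Left} and \eqref{Right}. The crucial step is a Neumann--Zagier-type bilinear relation for $\PGL(3,\bC)$ (extending \cite{NeumanZagier,Neuman} and the observations of Genzmer \cite{Genzmer}): it asserts that the restriction of the skew form $\bigoplus_\nu\Omega^*$ to the gluing subspace annihilates the interior directions, so that $\sum_\nu z_\nu\wedge_{\Omega^*} z_\nu$ collapses to a pairing supported on the link holonomy. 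This already yields the qualitative claim that $\delta(\beta(K))$ depends only on $z_\Sigma$ and on $A_s,B_s,C_r,D_r,A_s^*,B_s^*,C_r^*,D_r^*$.

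For part (2), where $\Sigma=\emptyset$ and every link is a torus, the surviving boundary term is a sum over the torus links. On each torus I would take the symplectic basis $(a_s,b_s)$, represent $a_s,b_s$ by edge-paths as in \S\ref{holonomydecoration}, and express the diagonal holonomy eigenvalues $A_s,A_s^*,B_s,B_s^*$ as the corresponding products of $z$-coordinates. Feeding these into the reduced pairing and computing the quadratic form on the $2$-dimensional homology of the torus (where the Goldman--Weil--Petersson form appears) produces, after clearing denominators, the stated expression $\sum_s(2A_s\wedge_{\bZ} B_s + 2A_s^*\wedge_{\bZ} B_s^* + A_s^*\wedge_{\bZ} B_s + A_s\wedge_{\bZ} B_s^*)$ for $3\delta(\beta(K))$; the global factor $3$ should reflect the three diagonal eigenvalues $1/A_s^*,1,A_s$ of the rank-$3$ holonomy and the attendant normalization of the Neumann--Zagier form. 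The main obstacle I anticipate is precisely this generalized bilinear relation together with the explicit torus computation: one must organise the homology of the complex and of the link tori so that the interior $z$-variables pair to zero, and so that the mixed cross-terms $A_s^*\wedge_{\bZ} B_s$ and $A_s\wedge_{\bZ} B_s^*$ emerge with the correct coefficients, which is where the classical $\SL(2,\bC)$ picture genuinely has to be upgraded.
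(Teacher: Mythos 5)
Your overall strategy is the paper's: additivity of $\beta$, the local identities $\delta(\beta(T_\nu))=\frac12 a_\nu\wedge_{\Omega^2}a_\nu=\frac12 z_\nu\wedge_{\Omega^*}z_\nu$, cancellation of interior $W$-invariants in the unipotent case, and a Neumann--Zagier-type symplectic reduction in general. Part (1) of your argument is essentially complete and coincides with the paper's proof (cancellation of the $W_{ijk}$ over interior faces, leaving $W(\Sigma)=\frac12 z_\Sigma\wedge_{\Omega_\Sigma^*}z_\Sigma$). The problem is everything after that: the ``crucial step'' you invoke --- that the glued form $\bigoplus_\nu\Omega^*$ restricted to the consistency subspace kills the interior directions and is computed by the link holonomy --- is not a known black box you may cite; it is the entire content of the paper's Theorem \ref{thm:homologies} and Corollaries \ref{corhom}--\ref{corkab}, and you explicitly defer it as ``the main obstacle I anticipate.'' Concretely, what is missing is: the construction of the linearization maps $h:C_1(\mathcal D,L)\to J^2$ and $g:J^2\to C_1(\mathcal D',L')$ with coefficients in the root lattice $L$ of $\mathfrak{sl}(3)$; the identity $z\circ h=R(z)^2$ relating $h$ to the diagonal holonomy; the homological computation (Neumann's lemma on the paths $\sum c_{ik}'-c_{il}'$ and the vanishing ``far from the cusp'' contribution) showing $\bar g\circ\bar h=4$; and the dimension count $\dim\mathcal H(J)=4l$ needed to conclude that $\bar h$ is an isomorphism over $\bZ\left[\frac12\right]$ with $\bar h^*\Omega=-4\omega$. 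Without these, the qualitative first assertion and part (2) are unproved.

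A second, smaller issue: your explanation of the factor $3$ in part (2) (``the three diagonal eigenvalues of the rank-$3$ holonomy'') is not the actual mechanism. In the paper the factor arises because $\frac12 z\wedge_{\Omega^*}z=\frac12 R(z)\wedge_{\mathrm{wp}}R(z)$, and the Weil--Petersson pairing couples the cup product with the scalar product $\langle\,,\rangle$ dual to the Killing form on the $A_2$ lattice, whose Gram entries are $\frac23$ and $\frac13$; multiplying by $3$ clears these denominators and yields exactly $\sum_s\left(2A_s\wedge_{\bZ}B_s+2A_s^*\wedge_{\bZ}B_s^*+A_s^*\wedge_{\bZ}B_s+A_s\wedge_{\bZ}B_s^*\right)$. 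You do say ``after clearing denominators,'' so the instinct is right, but to make this a proof you must exhibit the pairing on $H_1(S,L)$ explicitly and verify the coefficients, which again rests on the unproven reduction.
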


Theorem \ref{thm:kabaya} generalizes several results known in the $\SL (2 , \bC)$-case, see Neumann \cite{Neuman} -- when $K$ is closed -- and Kabaya \cite{Kabaya} -- when all the connected components of $\Sigma$ are spheres with $3$ vertices. A related 
formula -- still in the $\SL(2,\bC)$-case -- is obtained by Bonahon \cite{Bonahon,Bonahon2}. One may extract from our proof a formula for the general case. 
Though it should be related to the Weil-Petersson form on $\partial M$ we are not able yet to explicit this relation.

\medskip
\noindent
{\it Remark.} Thanks to theorem \ref{thm:kabaya}, the fact that $\beta$ lies inside the Bloch group is a boundary condition (the only non-vanishing part is $\frac32 z_{\Sigma} \wedge_{\Omega_{\Sigma}^*} z_{\Sigma}$). As a consequence, if the boundary is empty, it will automatically belong to the Bloch group. Using the work of Suslin, it allows to construct geometrically any class in $K_3^{\rm ind}(k)$, empowering a remark of Fock and Goncharov, see \cite[Proposition 6.16]{FG2}.

\section{Some linear algebra and the unipotent case}

The goal of this section is to prove theorem \ref{thm:kabaya} when $K$ has
a unipotent decoration. Along the way, we lay down the first basis for the homological proof in the general case.

\subsection{} First let $(J^i , \Omega^i)$ ($i=\emptyset,2$) denote the orthogonal sum of the spaces $(J^i_{T_{\nu}} , \Omega^i)$. We denote by $e_{\alpha}^{\mu}$ the $e_{\alpha}$-element in $J_{T_{\mu}}^i$.

A decoration provides us with an element 
$$z \in \mathrm{Hom} (J , k^{\times}) \left[ \frac12 \right] \simeq k^{\times} \otimes_{\bZ} J^*\left[ \frac12 \right] = k^\times \otimes_\bZ\mathrm{Im} (p^*)\left[ \frac12 \right]$$ 
which satisfies the face and edge conditions.\footnote{Note that $z$ moreover satisfies the non-linear equations 
$$z_{ik} (T_{\nu}) = \frac{1}{1-z_{ij} (T_{\nu})}.$$} 
We first translate these two consistency relations into linear algebra.

Let $C^{\rm or}_1$ be the free $\bZ$-module 
generated by the oriented internal\footnote{Recall that our complex may have boundary.} $1$-simplices of $K$ and  $C_2$ the free $\bZ$-module 
generated by the internal $2$-faces of $K$.
Introduce the map
$$
F : C^{\rm or}_1+ C_2 \rightarrow  J^2
$$
defined by, for $\bar e_{ij}$ an internal oriented edge of $K$,  
$$
F(\bar e_{ij})= e_{ij}^{1} + \ldots + e_{ij}^{\nu} 
$$
where $T_{1}, \ldots , T_{\nu}$ is the sequence of tetrahedra sharing the edge $\bar e_{ij}$
such that $\bar e_{ij}$ is an inner edge of the subcomplex composed by the $T_{\mu}$'s and each $e_{ij}^{\mu}$ gets identified with the {\it oriented} edge $\bar e_{ij}$ in $K$ (recall figure \ref{edge}). And for 
a $2$-face $\bar e_{ijk}$, 
$$
F(\bar e_{ijk})=e_{ijk}^{\mu} + e_{ikj}^{\nu},
$$
where $\mu$ and $\nu$ index the two 
$3$-simplices having the common face  $\bar e_{ijk}$.
An element $z \in \mathrm{Hom} (J^2 ,k^{\times})$ satisfies the face and edge conditions if and only if it vanishes on $\mathrm{Im} (F)$. 

Let $(J_{\rm int}^2)^*$ be the subspace of $(J^2)^*$ generated by internal edges and faces of $K$. 

The dual map $F^* : (J^2)^* \rightarrow C_1^{\rm or} + C_2$ (here we identify $C_1^{\rm or} + C_2$ with its dual by using the canonical basis) is the ``projection map'':
$$(e_{\alpha}^{\mu})^* \mapsto \bar e_{\alpha}$$
when $(e_{\alpha}^{\mu})^* \in (J_{\rm int}^2)^*$ and maps  $(e_{\alpha}^{\mu})^*$ to $0$ if $(e_{\alpha}^{\mu})^* \notin (J_{\rm int}^2)^*$.

From the definitions we get the following:

\begin{lemma}
An element $z \in  k^{\times} \otimes_{\bZ} (J^2)^*$ satisfies the face and edge conditions if and only if
$$z \in k^{\times} \otimes_{\bZ} \mathrm{Ker} (F^* ) .$$
\end{lemma}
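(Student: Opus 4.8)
The plan is to unwind the definitions and reduce the statement to a pure linear-algebra duality fact. First I would recall what is being claimed: an element $z \in k^{\times} \otimes_{\bZ} (J^2)^*$ is, via the canonical isomorphism $k^{\times} \otimes_{\bZ} (J^2)^* \cong \mathrm{Hom}(J^2, k^{\times})$, the same data as a homomorphism $J^2 \to k^{\times}$. The preceding paragraph already established that this homomorphism satisfies the face and edge conditions if and only if it vanishes on $\mathrm{Im}(F)$. So the entire content of the lemma is the equivalence
\begin{equation*}
z \text{ vanishes on } \mathrm{Im}(F) \iff z \in k^{\times} \otimes_{\bZ} \mathrm{Ker}(F^*).
\end{equation*}

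The key step is to identify the annihilator of $\mathrm{Im}(F)$ with $\mathrm{Ker}(F^*)$ at the level of the tensored modules. Working first over $\bZ$ (or rather noticing the purely formal nature of the statement), I would observe that for a homomorphism $F : C_1^{\rm or} + C_2 \to J^2$ of free $\bZ$-modules, the dual map $F^* : (J^2)^* \to (C_1^{\rm or} + C_2)^*$ satisfies $\langle F^*(\phi), c \rangle = \langle \phi, F(c) \rangle$ for all $\phi \in (J^2)^*$ and $c \in C_1^{\rm or}+C_2$. Hence $\phi$ annihilates every element of $\mathrm{Im}(F)$ precisely when $F^*(\phi) = 0$, i.e.\ when $\phi \in \mathrm{Ker}(F^*)$. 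This is the standard fact that the annihilator of the image equals the kernel of the transpose, and it is exactly the description of $F^*$ as the ``projection map'' given just above the lemma: $F^*$ records, for each $\phi$, its pairing against the edges and faces in $\mathrm{Im}(F)$, so $F^*(\phi)=0$ says precisely that all those pairings vanish.

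The only genuine subtlety — and the step I would watch most carefully — is passing from the statement about a homomorphism $z : J^2 \to k^{\times}$ vanishing on $\mathrm{Im}(F)$ to the statement $z \in k^{\times} \otimes_{\bZ} \mathrm{Ker}(F^*)$, since $k^{\times}$ is only a $\bZ$-module and the tensor product need not commute with taking kernels when there is torsion. Here I would use that $C_1^{\rm or} + C_2$ is free (hence $F^*$ has free image inside the free module $(C_1^{\rm or}+C_2)^*$, and $\mathrm{Ker}(F^*)$ is a direct summand of the free module $(J^2)^*$). Because $\mathrm{Ker}(F^*)$ is a direct summand, the sequence $0 \to \mathrm{Ker}(F^*) \to (J^2)^* \to \mathrm{Im}(F^*) \to 0$ splits, so tensoring with $k^{\times}$ stays exact and identifies $k^{\times}\otimes_{\bZ}\mathrm{Ker}(F^*)$ with the kernel of $\mathrm{id}_{k^{\times}}\otimes F^*$. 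Thus $z$ lies in $k^{\times}\otimes_{\bZ}\mathrm{Ker}(F^*)$ iff $(\mathrm{id}\otimes F^*)(z)=0$, which under the isomorphism $k^{\times}\otimes_{\bZ}(J^2)^* \cong \mathrm{Hom}(J^2,k^{\times})$ translates to $z$ vanishing on $\mathrm{Im}(F)$, exactly the face and edge conditions.

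I expect the main obstacle to be purely bookkeeping: making sure the identification of $C_1^{\rm or}+C_2$ with its own dual via the canonical basis is used consistently, so that the splitting/freeness argument is legitimate and no hidden $2$-torsion issue (of the kind flagged earlier in the paper with the $\left[\frac12\right]$ localizations) sneaks in. Once freeness of $C_1^{\rm or}+C_2$ is invoked to guarantee $\mathrm{Ker}(F^*)$ is a summand, the proof collapses to the elementary transpose–annihilator identity, so I would expect to state it in one or two short lines, as the paper indeed does with ``From the definitions we get the following.''
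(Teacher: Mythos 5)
Your proof is correct and takes essentially the same route as the paper, which simply asserts the lemma with ``From the definitions we get the following'' after having already recorded that the face and edge conditions amount to vanishing on $\mathrm{Im}(F)$. The one point you rightly single out --- that $\mathrm{Ker}(F^*)$ is a direct summand of the free module $(J^2)^*$ because $\mathrm{Im}(F^*)$ is free over the PID $\bZ$, so tensoring with $k^{\times}$ commutes with taking the kernel --- is exactly the detail the paper leaves implicit, and you handle it correctly.
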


A decorated tetrahedra complex thus provides us with and element $z \in k^{\times} \otimes (J^* \cap \mathrm{Ker} (F^* )) \left[ \frac12 \right]$ and $\delta (\beta (K)) = \frac12 z \wedge_{\Omega^*} z$. 

\subsection{} In this section we assume that $K$ is equipped with a {\it unipotent} decoration. The boundary surface $\Sigma$ is then a union of ideally {\it triangulated} closed oriented\footnote{The orientation being induced by that of $K$.} surfaces with punctures decorated by affine flags in the sense of Fock and Goncharov \cite{FG}: the triangles are decorated by {\it affine} flags coordinates in such a way that the edge coordinates on the common edge of two triangles coincide. Each triangle being oriented we may define  the $W$-invariant:
$$W(\Sigma) = \sum_{\Delta} W_{\Delta}$$
where $W_{\Delta}$ is defined by \eqref{Wface}.\footnote{Note that in the case of $K=T$ the boundary of $T$ is a sphere with $4$ punctures and the definition of $W(T)$ in section \ref{decoration} matches this one.}

Recall from \S \ref{ref:512} that the unipotent decoration of $\Sigma$ provides us with an element $a_{\Sigma} \in k^{\times} \otimes_{\bZ} J^2_{\Sigma}$ which projects onto $z_{\Sigma} \in k^{\times} \otimes_{\bZ} J^*_{\Sigma}\left[ \frac12 \right]$. 
We have:\footnote{Note in particular that $W(\Sigma)$ only depends on the flag $z$-coordinates, see also \cite[Lem. 6.6]{FG2}. Moreover, in case $K=T$, we recover lemma \ref{deltaobeta}.}
$$W(\Sigma) = \frac12 a_{\Sigma} \wedge_{\Omega_{\Sigma}^2} a_{\Sigma} = \frac12 z_{\Sigma}\wedge_{\Omega_{\Sigma}^*} z_{\Sigma} .$$

We have already done the computations leading to the proof of the theorem \ref{thm:kabaya} in the unipotent case:

\begin{proposition}
In the unipotent case we have:
$$\delta (\beta (K)) = W( \Sigma ) .$$
\end{proposition}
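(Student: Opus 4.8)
The plan is to reduce the global statement to the local computation already carried out in Lemma \ref{deltaobeta}, and then to cancel every internal contribution. Since $\beta(K) = \sum_{\nu} \beta(T_{\nu})$ by definition and $\delta$ is a homomorphism, $\delta(\beta(K)) = \sum_{\nu} \delta(\beta(T_{\nu}))$. The unipotent decoration attaches a single affine flag to each vertex of $K$, and this one assignment simultaneously lifts every tetrahedron $T_{\nu}$ to a tetrahedron of affine flags; applying Lemma \ref{deltaobeta} to each lift and recalling that its proof identifies $\delta(\beta(T_{\nu}))$ with the sum $W(T_{\nu})$ of the four face terms $W_{ijk}$ of \eqref{Wface}, I would obtain
$$\delta(\beta(K)) = \sum_{\nu} W(T_{\nu}) = \sum_{(ijk)} W_{ijk},$$
where the right-hand sum runs over all oriented $2$-faces of all tetrahedra, each counted with the orientation induced by the tetrahedron containing it.

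Next I would regroup this sum according to whether a face is internal or lies on the boundary surface $\Sigma$. An internal face is shared by two tetrahedra $T$ and $T'$; because the decoration fixes a single affine flag at each of its three vertices, the two tetrahedra read off the \emph{same} edge coordinates $a_{ij},a_{jk},a_{ki}$, while they induce \emph{opposite} orientations, so the face enters as $(ijk)$ for $T$ and as $(ikj)$ for $T'$, with face determinants related by $a_{ikj}=-a_{ijk}$. The heart of the argument is then that $W_{ijk}+W_{ikj}$ vanishes up to $2$-torsion. The six edge terms split into three pairs of the form $x\wedge_{\bZ}y+y\wedge_{\bZ}x$ and cancel exactly by antisymmetry. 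The two ``leaving-the-face'' factors are mutually inverse — each is the corresponding $3$-ratio, $z_{ijk}$ for $T$ and $z_{ikj}=z_{ijk}^{-1}$ for $T'$ — so by bilinearity the pair $a_{ijk}\wedge_{\bZ}z_{ijk}+a_{ikj}\wedge_{\bZ}z_{ijk}^{-1}$ collapses, leaving only the single residual term $(-1)\wedge_{\bZ}z_{ijk}^{-1}$ produced by the sign in $a_{ikj}=-a_{ijk}$.

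This residue is the one delicacy and the point I expect to be the main obstacle: $(-1)\wedge_{\bZ}z_{ijk}^{-1}$ is $2$-torsion in $k^{\times}\wedge_{\bZ}k^{\times}$ but need not vanish on the nose, so the internal faces cancel only modulo $2$-torsion. This is precisely why Theorem \ref{thm:kabaya}(1) is phrased after multiplication by $2$; working in the localization $k^{\times}\wedge_{\bZ}k^{\times}\left[\frac12\right]$ — the convention already forced by the fact that $p^{*}(a)=z$ holds only modulo $2$-torsion — all these residues disappear. Once the internal faces are discarded, only the boundary faces survive, and these are exactly the triangles $\Delta$ of $\Sigma$, each carrying the orientation induced from $K$; their total is $\sum_{\Delta}W_{\Delta}=W(\Sigma)$, yielding $\delta(\beta(K))=W(\Sigma)$. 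The $K=T$ instance recorded in the footnote above serves as the consistency check that the boundary orientations match those used to define $W(\Sigma)$, and that the local normalization of Lemma \ref{deltaobeta} agrees with the global one.
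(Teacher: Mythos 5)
Your proof is correct and follows essentially the same route as the paper's: apply Lemma \ref{deltaobeta} tetrahedron by tetrahedron (the single affine flag per vertex makes the $a$-coordinates of shared edges and faces agree), regroup by faces, and cancel the two contributions of each internal face, leaving $\sum_{\Delta} W_{\Delta} = W(\Sigma)$. Your additional observation that each internal face leaves a $2$-torsion residue $(-1)\wedge_{\bZ} z_{ijk}$ coming from $a_{ikj}=-a_{ijk}$ is a genuine refinement that the paper's one-line proof elides; it vanishes whenever $k^{\times}\wedge_{\bZ}k^{\times}$ has no $2$-torsion (e.g.\ $k=\bC$) and in any case disappears after the localization at $2$ that the paper uses throughout.
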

\begin{proof} The proof is the same as that of \cite[Theorem 4.13]{FalbelWang}:
we compute $\sum \beta (T_{\nu} )$ for the tetrahedra complex using the $a$-coordinates as in \S \ref{acoord}. This gives a sum of $W$-invariants associated to the faces of the $T_{\nu}$'s. The terms corresponding to a common face between two tetrahedra appear with opposite sign. The sum of the remaining terms is precisely $W(\Sigma)$. 
\end{proof}

\subsection{}  
A unipotent decoration corresponds to a point $z \in k^{\times} \otimes (\mathrm{Im} ( p \circ F)) \left[ \frac12 \right]$. In 
\S \ref{ref:512} we therefore have defined a map 
$$k^{\times} \otimes (\mathrm{Im} ( p \circ F)) \left[ \frac12 \right] \to k^{\times}\otimes J^*_{\Sigma}\left[ \frac12 \right].$$
The following proposition states that this map respects the $2$-forms $\Omega^*$ and $\Omega^*_{\Sigma}$.

\begin{proposition} \label{Thm:unipotent}
In the unipotent case, $\Omega^*$ is the pullback of $\Omega^*_\Sigma$.
\end{proposition}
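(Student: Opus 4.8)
The plan is to show that the map in question, call it $\psi$, pulls $\Omega_\Sigma^*$ back to $\Omega^*$. Recall that $\psi$ sends the decoration vector $z\in k^{\times}\otimes\mathrm{Im}(p\circ F)$ to its boundary restriction $z_\Sigma$: concretely it reads off the face $z$-coordinates on the boundary faces and the products $z_{ij}(T_1)\cdots z_{ij}(T_\nu)$ along the boundary edges, so it is induced by a $\bZ$-linear map $\psi:\mathrm{Im}(p\circ F)\to J_\Sigma^*$, and the goal is $\psi^*\Omega_\Sigma^*=\Omega^*$. First I would reduce this to the affine level. By construction and by \eqref{az} the forms $\Omega^*$ and $\Omega_\Sigma^*$ are the transports of the affine forms $\Omega^2$ and $\Omega^2_\Sigma$ through the maps $p:J^2\to(J^2)^*$ and $p_\Sigma:J^2_\Sigma\to(J^2_\Sigma)^*$, that is $\Omega^*(px,py)=\Omega^2(x,y)$ and likewise on $\Sigma$. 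So it suffices to realize $\psi$ as the shadow of a purely combinatorial inclusion of the boundary and to prove the analogous statement for the affine forms.

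To this end I would introduce the combinatorial boundary inclusion $\iota:J^2_\Sigma\to J^2$ sending each boundary-face generator $e_{ijk}^\Sigma$ to the generator $e_{ijk}^\nu$ of the unique tetrahedron $T_\nu$ carrying that face, and each boundary-edge generator $e_{ij}^\Sigma$ to the fan-sum $\sum_\mu e_{ij}^\mu$ over the tetrahedra $T_\mu$ meeting the boundary edge $ij$. Its adjoint $\iota^*$ restricts to $\psi$ on $\mathrm{Im}(p\circ F)$. The key identity to establish is then $\iota^*\Omega^2=\Omega^2_\Sigma$, i.e.\ $\Omega^2(\iota w,\iota v)=\Omega^2_\Sigma(w,v)$ for all $w,v\in J^2_\Sigma$, equivalently $\iota^*p\iota=p_\Sigma$. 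Granting it, the functoriality of Lemma \ref{lemfacile} applied to $p$ and $p_\Sigma$ transports the identity through the two maps and yields $\psi^*\Omega_\Sigma^*=\Omega^*$ on $\mathrm{Im}(p\circ F)$.

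The identity $\iota^*\Omega^2=\Omega^2_\Sigma$ I would prove by exactly the mechanism already used in the preceding proposition, but carried out at the level of the forms rather than evaluated on a fixed decoration. Since $\Omega^2=\bigoplus_\nu\Omega^2_{T_\nu}$ is block diagonal over the tetrahedra, $\Omega^2(\iota w,\iota v)$ splits as a sum of per-tetrahedron, per-face pairings governed by the coefficients $\varepsilon_{\alpha\beta}$. Each internal face is shared by two tetrahedra with opposite induced orientation, so its contributions cancel in pairs, precisely as the $W$-terms of a common face cancelled in the previous proof (the sign rule for $\varepsilon$ encoding $z_{ijk}(T)=1/z_{ikj}(T')$). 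The surviving pairings live along the boundary faces and boundary edges and, by the Fock--Goncharov definition of $\Omega^2_\Sigma$ on the triangulated surface $\Sigma$, reassemble into $\Omega^2_\Sigma(w,v)$; this is their amalgamation of the form (cf.\ the $W$-computation and \cite[Cor. 6.15]{FG}).

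The hard part will be the orientation and sign bookkeeping across the glued tetrahedra: one must check that the coefficients $\varepsilon_{\alpha\beta}$ around a boundary edge, summed over its fan, reproduce exactly the single boundary coefficient, and that all internal contributions (from internal faces and internal edges alike) cancel, so that $\iota^*p$ indeed lands in $\mathrm{Im}(p_\Sigma)$ on $\mathrm{Im}(p\circ F)$. This is where the $2$-torsion and sign caveats noted in the $p^*(a)=z$ discussion must be tracked, which is why the conclusion is only asserted after inverting $2$. As a softer alternative one could argue by genericity: we already know $z\wedge_{\Omega^*}z=z_\Sigma\wedge_{\Omega_\Sigma^*}z_\Sigma=z\wedge_{\psi^*\Omega_\Sigma^*}z$ for every unipotent decoration, so $\Omega^*-\psi^*\Omega_\Sigma^*$ is a skew form annihilating every decoration vector; choosing affine flags whose coordinates are multiplicatively independent in $k^{\times}$ would force it to vanish, the remaining gap being to verify that such decorations span $\mathrm{Im}(p\circ F)$.
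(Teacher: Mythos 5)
Your argument is essentially the paper's own proof: reduce to the affine forms via $p$ and $p_\Sigma$ (the identity $p^*(\Omega^*)=\Omega^2$ on each tetrahedron together with Lemma \ref{lemfacile}), decompose $\Omega^2$ into per-face contributions $\Omega^2(f,T)$, cancel the internal faces in pairs using $\Omega^2(f,T)=-\Omega^2(f,T')$ from the orientation reversal, and identify the surviving boundary contributions with $\Omega^2_\Sigma$. Your packaging of this through the inclusion $\iota$ and its adjoint, and the residual "sign bookkeeping", is just a more explicit phrasing of the same cancellation the paper invokes, so the proposal is correct and follows the paper's route.
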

\begin{proof} 
We have seen that on each tetrahedron $p^*(\Omega^*(T))=\Omega^2(T)$. 

Since $\mathrm{Im} ( p \circ F)$ is the image  by $p$ of the subspace $\mathrm{Im}(F)$ of $J^2$, each face $f$ of $T$ is an oriented triangle with $a$-coordinates, so we define a $2$-form $\Omega^2(f,T)$ by the usual formula. If the face $f$ is internal between $T$ and $T'$, we have $\Omega^2(f,T)=-\Omega^2(f,T')$ as the only difference is the orientation of the face (and hence of its red triangulation, see figure \ref{FG}).

Moreover $p^*(\Omega^*)$ is the sum of the $\Omega^2(T)$. Hence it reduces to the sum on external faces of $\Omega^2(f,T)$, that is exactly $\Omega_\Sigma^2=p^*(\Omega^* )$.
\end{proof}

Our goal is now to extend this result beyond the unipotent case; to this end we develop a theory analogous to the one of Neumann-Zagier but in the $\PGL (3, \bC)$-case. We first treat in details the case where $K$ is closed.

\section{Neumann-Zagier bilinear relations for $\PGL (3, \bC)$}\label{sec:BilinRel}
 
A decorated tetrahedra complex provides us with and element $z \in k^{\times} \otimes (J^* \cap \mathrm{Ker} (F^* )) \left[ \frac12 \right]$ and $\delta (\beta (K)) = \frac12 z \wedge_{\Omega^*} z$. Our final goal is to compute this last expression. But here we first describe the right set up to state the generalization of proposition \ref{Thm:unipotent} 
to general -- non-unipotent -- decorations. This leads to a more precise version of theorem \ref{thm:kabaya}, see corollary \ref{corkab}.
We first deal with the case where $K$ is a (closed) $3$-cycle. We will later explain how to modify the definitions and proofs to deal with the general case.

\subsection{Coordinates on the boundary} Let $K$ be a quasi-simplicial triangulation of $M$. Assume that $K$ is closed so that $\Sigma = \emptyset$ and
each $|L_v|$ is a torus. We first define 
coordinates for $\partial M$ and a symplectic structure on these coordinates. 

Each torus boundary surface $S$ in the link of a vertex is triangulated by the traces of the tetrahedra; from this we build the CW-complex $\mathcal D$ whose edges consist of the inner edges of the first barycentric subdivision, see figure \ref{fig:celldecomposition}. We denote by $\mathcal D'$ the dual cell division Let $C_1 (\mathcal D) = C_{1} (\mathcal D ,\bZ)$ and $C_1 (\mathcal D') = C_{1} (\mathcal D' , \bZ)$ be the corresponding chain groups. Given two chains $c \in C_1 (\mathcal D)$ and $c' \in C_{1} (\mathcal D' )$ we denote by $\iota (c, c')$ the (integer) intersection number of $c$ and $c'$. This defines a bilinear form $\iota : C_1 (\mathcal D ) \times C_1 (\mathcal D' ) \to \bZ $ which induces the usual intersection form on $H_1 (S)$. In that
way $C_1 (\mathcal D' )$ is canonically isomorphic to the dual of $C_1 (\mathcal D)$. 

\begin{figure}[ht]      
\begin{center}
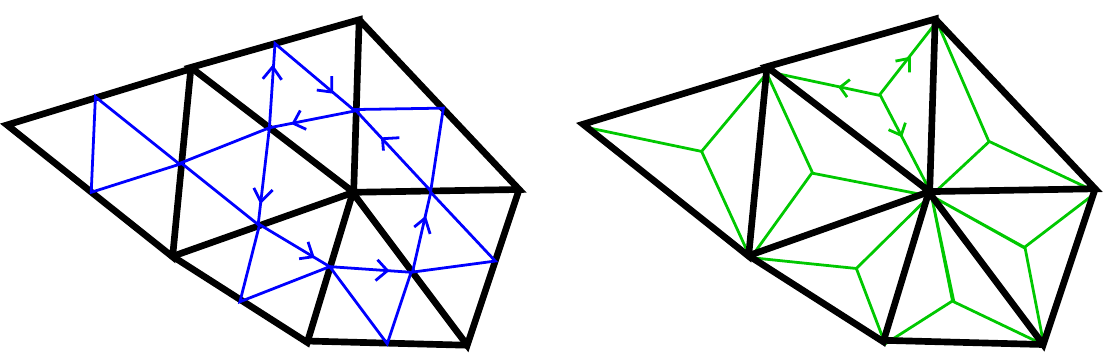
\caption{The two cell decompositions of the link} \label{fig:celldecomposition}
\end{center}
\end{figure}

\subsection{Goldman-Weil-Petersson form for tori} \label{ss:omegatori} Here we equip 
$$C_1 (\mathcal D , \bR^2) = C_1 (\mathcal D ) \otimes \bR^2$$
with the bilinear form $\omega$ defined by coupling the intersection form $\iota$ with the scalar product on $\bR^2$ seen as the space of roots of $\mathfrak{sl}(3)$ with its Killing form. We describe more precisely an integral version of this.

From now on we identify $\bR^2$ with the subspace $V = \{ (x_1 , x_2 , x_3)^t \in \bR^3 \; : \; x_1+x_2+x_3=0\}$ via 
$$\begin{pmatrix} 1\\0\end{pmatrix}\mapsto \begin{pmatrix}1\\-1\\0\end{pmatrix}\textrm{ and }\begin{pmatrix} 0\\1\end{pmatrix}\mapsto \begin{pmatrix}0\\1\\-1\end{pmatrix}.$$ 

We let $L \subset V$ be the standard lattice in $V$ where all three coordinates are in $\Z$. We identify it with $\Z^2$ using the above basis of $V$.   
The restriction of the usual euclidean product of $\bR^3$ gives a product, denoted $[,]$, on $V$ (the ``Killing form'')\footnote{In terms of roots of $\mathfrak{sl}(3)$, the choosen basis is, in usual notations, $e_1-e_2$, $e_2-e_3$.}. In other words, we have:
$$\left[\begin{pmatrix}1\\0\end{pmatrix},\begin{pmatrix}1\\0\end{pmatrix}\right]=\left[\begin{pmatrix}0\\1\end{pmatrix},\begin{pmatrix}0\\1\end{pmatrix}\right]=2\textrm{ and }\left[\begin{pmatrix}0\\1\end{pmatrix},\begin{pmatrix}1\\0\end{pmatrix}\right]=-1.$$
Identifying $V$ with $V^*$ using the scalar product $[,]$, the dual lattice $L^* \subset V^*$ becomes a lattice $L'$ in $V$; an element $y \in V$ belongs to $L'$
if and only if $[x,y] \in \Z$ for every $x \in L$. 

We let $C_1 (\mathcal D , L)$ and define $\omega = \iota \otimes [ \cdot , \cdot ] : C_1 (\mathcal D , L) \times C_1 (\mathcal D' , L') \rightarrow \bZ$ by the formula
$$\omega \left( c \otimes l , c' \otimes l' \right) = \iota (c,c') \left[ l , l' \right].$$
This induces a (symplectic) bilinear form on $H_1 (S, \bR^2)$ which we still denote by $\omega$. 
Note that $\omega$ identifies $C_1 (\mathcal D' , L')$ with the dual of $C_1 (\mathcal D , L)$. 

\begin{remark}
The canonical coupling $C_1 (\mathcal D , L) \times C^1 (\mathcal D , L^*) \rightarrow \Z$ identifies $C_1 (\mathcal D , L)^*$ with $C^1 (\mathcal D , L^*)$. This 
last space is naturally equipped with the ``Goldman-Weil-Petersson'' form $\textrm{wp}$, dual to $\omega$. Let $\langle , \rangle$ be the natural
scalar product on $V^*$ dual to $[,]$: letting $d:V\to V^*$ be the map defined by $d(v) = [v, \cdot ]$ we have
$\langle d (v) , d (v') \rangle = [v,v']$. In coordinates $d : \bR^2 \rightarrow \bR^2$ is given by
$$d\begin{pmatrix}x\\y\end{pmatrix}=\begin{pmatrix}2x-y\\2y-x\end{pmatrix}.$$ 
Idenfying $V^*$ with $\bR^2$ using the dual basis we have: 
$$\langle\begin{pmatrix}1\\0\end{pmatrix},\begin{pmatrix}1\\0\end{pmatrix}\rangle=\langle\begin{pmatrix}0\\1\end{pmatrix},\begin{pmatrix}0\\1\end{pmatrix}\rangle=\frac23\textrm{ and }\langle\begin{pmatrix}0\\1\end{pmatrix},\begin{pmatrix}1\\0\end{pmatrix}\rangle=\frac13.$$ 
On $H^1 (S , \bR^2)$ the bilinear form $\textrm{wp}$ induces a symplectic form -- the usual Goldman-Weil-Petersson symplectic form -- formally defined as the coupling of the cup-product and the scalar product $\langle , \rangle$.
\end{remark}

\subsection{} To any decoration $z\in k^{\times}\otimes (J^*\cap \mathrm{Ker}(F^*))\left[ \frac12 \right]$ we now explain how to associate an element 
$$R(z) \in \mathrm{Hom} (H_1(S , L), k^{\times} ) \left[ \frac12 \right].$$
We may represent any class in $H_1 (S, L)$ by an element $c \otimes \begin{pmatrix} n \\ m \end{pmatrix}$ in $C_1 (\mathcal D , L)$ where $c$ is a closed path in $S$ seen as the link of the corresponding vertex in the complex $K$. 
Using the decoration $z$ we may compute the holonomy of the loop $c$, as explained in \S \ref{holonomydecoration}. 
This vertex being equipped with a flag stabilized by this
holonomy, we may write it as an upper triangular matrix. Let $(\frac{1}{C^*},1,C)$ be the diagonal part. The application which maps 
$c \otimes \begin{pmatrix} n \\ m \end{pmatrix}$ to $C^{m} (C^*)^n$ is the announced element $R(z)$ of $k^{\times}\otimes H^1 (S , L^* )\left[ \frac12 \right]$.

\subsection{Linearization for a torus}\label{ss:linearizationholonomy}
In the preceeding paragraph we have constructed a map
$$R:k^{\times}\otimes J^*\cap \mathrm{Ker}(F^*)\left[ \frac12 \right] \to \mathrm{Hom} (H_1(S,L), k^{\times}) \left[ \frac12 \right].$$
As we have done before for consistency relations we now linearize this map. 

Let $h: C_1 (\mathcal D , L) \to J^2$ be the linear map defined on the elements $e \otimes \begin{pmatrix} n \\ m \end{pmatrix}$ of $C_1 (\mathcal D,L)$ by 
$$h \left( e \otimes \begin{pmatrix} n \\ m \end{pmatrix} \right) = 2m e_{ij}^{\mu} + 2n e_{ji}^{\mu} +  n (e_{ijk}^{\mu} + e_{ilj}^{\mu}) .$$
Here we see the edge $e$ as turning left around the edge $(ij)$ in the tetrahedron $T_{\mu} = (ijkl)$, see figure \ref{fig:h}.
 
\begin{figure}[ht]      
\begin{center}
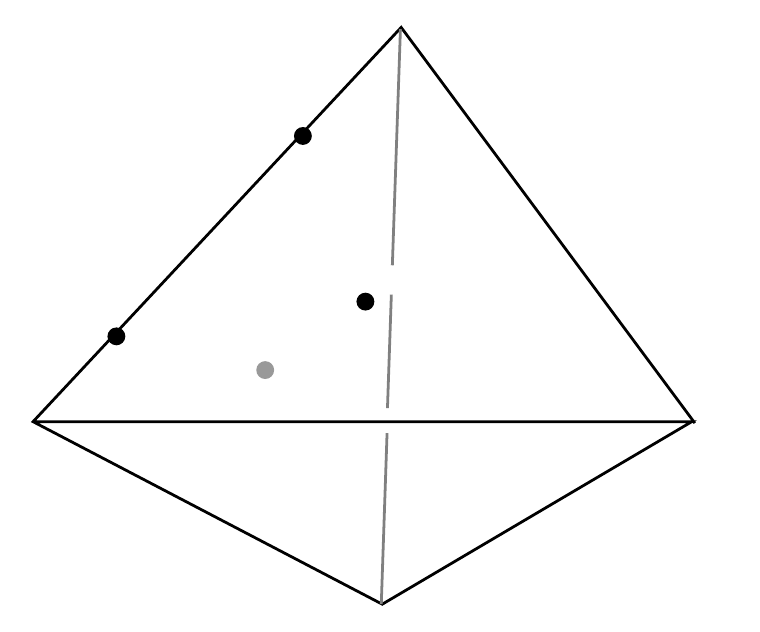
\caption{The map $h$} \label{fig:h}
\end{center}
\end{figure}
\begin{lemma}

Let $z \in k^{\times}\otimes (J^*\cap \mathrm{Ker}(F^*))\left[ \frac12 \right]$. Seeing $z$ as an element of $\mathrm{Hom} (J^2 , k^{\times}) \left[ \frac12 \right]$,
we have:
$$z \circ h = R(z)^2.$$
\end{lemma}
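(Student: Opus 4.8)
The plan is to turn the statement into an explicit holonomy computation and to check it one elementary step at a time. Both $z\circ h$ and $R(z)^2$ are homomorphisms into $k^{\times}\left[\frac12\right]$, the first because $h$ is $\bZ$-linear, the second because the diagonal of the holonomy of a loop is multiplicative along the loop; so it is enough to identify the contribution of a single generator $e\otimes\binom{n}{m}$, where $e$ turns left around an edge $ij$ inside one tetrahedron $T_{\mu}=(ijkl)$, and then to assemble the contributions along a representative of a class in $H_1(S,L)$.

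First I would make $R(z)$ explicit on such a generator. By definition $R(z)$ extracts the diagonal $\left(\frac1{C^{*}},1,C\right)$ of the holonomy written in a basis adapted to the flag decorating the vertex, and returns $C^{m}(C^{*})^{n}$. Using Lemma \ref{lem:hol} I would build this holonomy out of the left-turn matrices \eqref{Left} together with the cyclic-permutation (face) matrices $T(z_{\cdot})$ needed to change between the local bases attached to the triples $(ijk)$. The computation of the turning-right holonomy \eqref{Right} already displays the relevant phenomenon: composing a diagonal step $E$ with the face rotations $T$ produces an upper-triangular matrix whose diagonal involves not only the edge ratios $z_{ij},z_{ji}$ but also the face $3$-ratios $z_{ijk}$ and $z_{ilj}$. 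I would record exactly how each step feeds $C$ and $C^{*}$.

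Next I would compute the left-hand side. Straight from the definitions of $h$ and of $z$ as an element of $\mathrm{Hom}(J^{2},k^{\times})\left[\frac12\right]$ one gets $z\bigl(h(e\otimes\binom{n}{m})\bigr)=z_{ij}^{2m}\,z_{ji}^{2n}\,(z_{ijk}z_{ilj})^{n}$, and the task is to match this with $\bigl(C^{m}(C^{*})^{n}\bigr)^{2}$. Here the coordinate relations \eqref{Rel1} and \eqref{Rel3} are the key tool: combining the face relations at the faces $ijk$ and $ilj$ with the vertex relations $z_{ij}z_{ik}z_{il}=-1$ yields $z_{ijk}z_{ilj}=\frac{z_{kl}z_{lk}}{z_{ij}z_{ji}}$, which trades the face contributions for edge contributions on the opposite edge $kl$ and lets the terms telescope as the loop passes through successive tetrahedra. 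The doubling in the edge coefficients of $h$ -- hence the square on the right -- is precisely what absorbs the $2$-torsion carried by the $\left[\frac12\right]$ coefficients (the same ambiguity already seen in $p^{*}(a)=z$ holding only modulo $2$-torsion), so that both sides are genuine integral expressions.

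The main obstacle is the bookkeeping of the non-diagonal base changes. The elementary matrices live in the local bases of the triples $(ijk)$, and the face rotations $T(X)$ permute the coordinate slots, so the diagonal of the total product in the fixed flag basis is not merely the product of the local diagonals. Ensuring that the face $3$-ratios land in the slot feeding $C^{*}$ with the correct exponent, and that this is consistent with the weight $n$ attached to $e_{ijk}$ and $e_{ilj}$ in $h$ while the edges $e_{ij},e_{ji}$ carry weights $2m,2n$, is where the real content sits; the relations \eqref{Rel1}--\eqref{Rel3} are exactly what reconcile the two normalizations.
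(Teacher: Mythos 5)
Your overall strategy --- represent the class by a loop made of elementary left and right turns, read the diagonal of the holonomy off the matrices \eqref{Left} and \eqref{Right}, and compare with the explicit value $z\left(h\left(e\otimes\binom{n}{m}\right)\right)=z_{ij}^{2m}z_{ji}^{2n}(z_{ijk}z_{ilj})^{n}$ --- is the same as the paper's, and that local formula is correct. But the reconciliation step is where the content lies, and the mechanism you propose for it does not work. Locally the two sides do \emph{not} agree: a left turn around $ij$ contributes $z_{ij}^{2m}z_{ji}^{2n}$ to $\left(C^{m}(C^{*})^{n}\right)^{2}$ but $z_{ij}^{2m}z_{ji}^{2n}(z_{ijk}z_{ilj})^{n}$ to $z\circ h$, while a right turn contributes face coordinates with exponent $-2n$ to the former (via the diagonal entry $z_{ji}z_{ilj}z_{ijk}$ of \eqref{Right}) but only $-n$ to the latter; in both cases the discrepancy is a factor $(z_{ijk}z_{ilj})^{n}$, so what must be proved is that $\prod (z_{ijk}z_{ilj})=1$, the product running over \emph{all} tetrahedra traversed by the loop. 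Your identity $z_{ijk}z_{ilj}=z_{kl}z_{lk}/(z_{ij}z_{ji})$ is indeed a correct consequence of \eqref{Rel1} and \eqref{Rel3}, but it does not produce the claimed telescoping: consecutive tetrahedra along the loop share a face through the vertex whose link carries the loop, not the opposite edge, so the edges $kl$ of successive tetrahedra are unrelated edges of $K$ and neither the $z_{kl}z_{lk}$ nor the leftover $z_{ij}z_{ji}$ factors cancel against anything.

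What actually kills $\prod(z_{ijk}z_{ilj})$ is the hypothesis $z\in\mathrm{Ker}(F^{*})$, i.e.\ the face consistency relation $z_{ijk}(T)\,z_{ikj}(T')=1$ between \emph{adjacent} tetrahedra --- a relation your argument never invokes. The internal relations \eqref{Rel1}--\eqref{Rel3} hold for any single tetrahedron of flags and cannot suffice on their own, since they never see the gluing. The point is that the two faces $ijk$ and $ilj$ indexing the coordinates attached to a turn around $ij$ are exactly the two faces through which the corresponding segment of $\mathcal D$ enters and leaves the tetrahedron; each is shared with the neighbouring tetrahedron along the loop, which contributes the same face with the opposite boundary orientation, and the face relation cancels the pair, so the product over a closed loop is $1$. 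This is precisely the bookkeeping the paper carries out when it rewrites $C^{*}$ so as to isolate the set $\mathcal F$ of direction-change faces and then matches the surviving $\prod_{\mathcal F}z_{F}^{-2}$ in $(C^{*})^{2}$ against the surviving signed face coordinates in $z\circ h$. Until you replace the telescoping claim by this use of the face relation, the proof has a genuine gap.
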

\begin{proof}
Let $c$ be an element in $H_1(S)$. 
 Recall that the torus is triangulated by the trace of the tetrahedra. To each triangle corresponds a tetrahedron $T_\mu$ and a vertex $i$ of this tetrahedron.  Now each vertex of the triangle corresponds to an edge $ij$ of the tetrahedron $T_\mu$ oriented from the vertex $j$ to $i$. Hence each edge of $\mathcal D$ may be canonically denoted by $c_{ij}^\mu$: it is the edge in the link of $i$ which turns left around the edge $ij$ of the tetrahedron $T_\mu$. We represent $c$ as a cycle $\bar c = \sum \pm c_{ij}^\mu$. The cycle $\bar c$ turns left around some edges, denoted by $e_{ij}^{\mu}$, and right around other edges, denoted by $e_{ij}^{\mu'}$. In other terms, we have $\bar c =\sum_\mu c_{ij}^\mu -\sum_{\mu'} c_{ij}^{\mu'}$.
Then, using the matrices $L_{ij}^\mu$ (\ref{Left}) and $R_{ij}^{\mu'}$ (\ref{Right}), we see that the diagonal part of the holonomy of $c$ is given by:
\begin{equation}\label{eq:C}
C=\frac{\prod z_{ij}^{\mu}}{\prod z_{ij}^{\mu'}}
\end{equation}
$$\textrm{ and }C^*=\frac{\prod z_{ji}^{\mu}}{\prod z_{ji}^{\mu'} z_{ijk}^{\mu'}z_{ilj}^{\mu'}}.$$
Let us simplify a bit the formula for $C^*$. Recall the face relation: if $T$ and $T'$ share the same face $ijk$, we have $z_{ijk}(T)z_{ikj}(T')=1$. Hence if our path $c$ was turning right before a face $F$ and continues after crossing $F$, the corresponding face coordinate simplifies in the product $\prod z_{ijk}^{\mu'}z_{ilj}^{\mu'}$. Let $\mathcal F$ be the set of faces (with multiplicity) at which $\alpha$ changes direction. For $F$ in $\mathcal F$, let $T$ be the tetrahedron containing $F$ in which $\alpha$ turns right. We consider $F$ oriented as a face of $T$ and denote $z_F$ its $3$-ratio. We then have
\begin{equation}\label{eq:C^*}
 C^*=\frac{\prod z_{ji}^{\mu}}{\prod z_{ji}^{\mu'}\prod_{\mathcal F} z_{F}}.
\end{equation}

Now $h\left(c\otimes \begin{pmatrix} 0 \\ 1 \end{pmatrix} \right)=2\sum e_{ij}^{\mu} - 2\sum e_{ij}^{\mu'}$, as turning right is the opposite to turning left. It proves (with equation \ref{eq:C}) that
$$z\circ h \left(c\otimes \begin{pmatrix} 0 \\ 1 \end{pmatrix} \right)=\left(\frac{\prod z_{ij}^{\mu}}{\prod z_{ij}^{\mu'}}\right)^2=C^2.$$

We have to do a bit more rewriting to check it for $c\otimes \begin{pmatrix} 1 \\ 0 \end{pmatrix}$. Indeed, we have:
$$h\left(c\otimes \begin{pmatrix} 1 \\ 0 \end{pmatrix} \right)=\sum_\mu (2 e_{ji}^{\mu} + e_{ijk}^\mu +e_{ilj}^{\mu} ) - \sum_{\mu'} (
2e_{ji}^{\mu'}+e_{ijk}^{\mu'} +e_{ilj}^{\mu'}),$$
so that:
$$
z\circ h\left(c\otimes \begin{pmatrix} 1 \\ 0 \end{pmatrix} \right)= \left(\frac{\prod z_{ji}^{\mu}}{\prod z_{ji}^{\mu'}}\right)^2\frac{\prod z_{ijk}^{\mu}z_{ilj}^{\mu}}{\prod z_{ijk}^{\mu'}z_{ilj}^{\mu'}} .
$$
For the same reason as before the ``internal faces'' simplify in the product $\prod z_{ijk}^{\mu}z_{ilj}^{\mu}$. Moreover, for $F\in \mathcal F$, it appears with the opposite orientation: indeed the orientation given to $F$ is the one given by the tetrahedron in which $\alpha$ turns \emph{left}. Hence the coordinate that shows up is $\frac 1{z_F}$. So the last formula rewrites:
$$
z\circ h\left(c\otimes \begin{pmatrix} 1 \\ 0 \end{pmatrix} \right)=\left(\frac{\prod z_{ji}^{\mu}}{\prod z_{ji}^{\mu'}}\right)^2\left(\frac{1}{\prod_{\mathcal F} z_{F}}\right)^2 = (C^*)^2,
$$
 which proves the lemma.
\end{proof}

Let $h^* : (J^2)^* \to C_1 (\mathcal D , L)^*$ be the map dual to $h$. Note that for any $e \in J^2$ and $c \in  C_1 (\mathcal D , L)$ we have
\begin{equation} \label{h*}
(h^* \circ p (e))( c) = p(e) (h( c)) = \Omega^2 (e , h( c)).
\end{equation}
Now composing $p$ with $h^*$ and identifying $C_1 (\mathcal D , L)^*$ with $C_1 ( \mathcal D' , L')$ using $\omega$ we get a map
$$g : J^2 \rightarrow C_1 (\mathcal D' , L')$$
and it follows from \eqref{h*} that for any $e \in J^2$ and $c \in J_T$ we have
\begin{equation} \label{g}
\omega (c, g (e)) = \Omega^2 (e , h(c )).
\end{equation}

In the following we let $J_{\partial M}= C_1 (\partial M , L)$ and $C_1 (\partial M ' , L')$
be the orthogonal sum of the $C_1 (\mathcal D , L)$'s and $C_1 (\mathcal D' , L')$'s for each torus link $S$. We abusively denote by $h : J_{\partial M} \to J^2$
and $g: J^2 \to C_1 (\partial M' , L')$ the maps defined above on each $T$.

\subsection{Homology of the complexes}\label{ss:defhomology}
First consider the composition of maps:
$$C_1^{\rm or} + C_2 \stackrel{F}{\rightarrow} J^2 \stackrel{p}{\rightarrow} (J^2)^* \stackrel{F^*}{\rightarrow} C_1^{\rm or} + C_2 .$$
By inspection one may check that $F^* \circ p \circ F = 0$. Here is a geometric way to figure this after tensorization by $k^{\times} \otimes \bZ\left[ \frac12 \right]$:
first note that if $z=p^*(a)$ then $a$ can be thought as a set of affine coordinates lifting of $z$. 
Now $a$ belongs to the image of $F$ exactly when these $a$-coordinates agree on elements of $J^2$ corresponding to common oriented edges (resp. common faces) of $K$. In such a case the decoration
of $K$ has a unipotent decoration lifting $z$. Finally the map $F^*$ computes the last eigenvalue of the holonomy matrix of paths going through and back a face (face relations) and of paths going around 
edges (edges relations). In case of a unipotent decoration these eigenvalues are trivial. This shows that 
$F^* \circ p \circ F=0$.

In particular, letting $G : J \rightarrow C_1^{\rm or} + C_2$ be the map induced by $F^* \circ p$ and 
$F' : C_1^{\rm or} 
+ C_2 \to J$ be the map
$F$ followed by the canonical projection from $J^2$ to $J$, we get a complex:
\begin{equation} \label{complexd}
C_1^{\rm or} + C_2 \stackrel{F'}{\rightarrow} J \stackrel{G}{\rightarrow} C_1^{\rm or} + C_2 .
\end{equation}
Similarly, letting $G^*= p \circ F$ and $(F')^*$ be the restriction of $F^*$ to $\mathrm{Im} (p) = J^*$ we get the dual complex:
\begin{equation} \label{complex}
C_1^{\rm or} + C_2 \stackrel{G^*}{\rightarrow} J^* \stackrel{(F')^*}{\rightarrow} C_1^{\rm or} + C_2 .
\end{equation}
We define the homology groups of these two complexes:
$$\mathcal{H} (J) = \mathrm{Ker} (G) / \mathrm{Im} (F') = \mathrm{Ker} (F^* \circ p) / (\mathrm{Im} (F) + \mathrm{Ker} ( p))$$ 
and 
$$\mathcal{H} (J^*) = \mathrm{Ker} ((F')^*) / \mathrm{Im} (G^*) = (\mathrm{Ker}(F^* ) \cap \mathrm{Im} ( p)) / \mathrm{Im} (p \circ F).$$ 
We note that:
$$\mathrm{Ker} (F') = \mathrm{Im} (G)^{\perp_{\Omega}} \mbox{ and } \mathrm{Ker} (G^*) = \mathrm{Im} ((F')^*)^{\perp_{\Omega^*}}.$$
The symplectic forms $\Omega$ and $\Omega^*$ thus induce skew-symmetric bilinear forms on 
$\mathcal{H} (J)$ and $\mathcal{H} (J^*)$. These spaces are obviously dual spaces and the bilinear
forms match through duality.

A decoration of $K$ provides us with an element $z \in k^{\times} \otimes \mathrm{Ker} ((F')^*)\left[ \frac12 \right]$. 
We already have dealt with the subspace $k^{\times} \otimes \mathrm{Im} (p\circ F)\left[ \frac12 \right]$ which corresponds to the unipotent decorations: in that case $\delta (\beta (K)) = 0$.
We thus conclude that $\delta (\beta (K))$ only depends on the image of $z$ in $k^{\times} \otimes_{\bZ} 
\mathcal{H} (J^*)\left[ \frac12 \right]$. We will describe this last space in terms of the homology of $\partial M$.

Let $Z_1 (\mathcal D , L)$ and $B_1 (\mathcal D, L)$ be the subspaces of cycles and boundaries in $C_1 (\mathcal D, L)$. The following lemma is easily checked by inspection.

\begin{lemma}
We have: 
$$h( Z_1 (\mathcal D, L)) \subset \mathrm{Ker} (F^* \circ p)$$
and 
$$h(B_1 (\mathcal D, L )) \subset \mathrm{Ker} (p) + \mathrm{Im} (F).$$
\end{lemma}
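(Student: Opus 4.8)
The plan is to verify the two inclusions directly from the combinatorial description of $h$, $F$ and $p$, using throughout the geometric dictionary between the torus link $S$ of a vertex $v$ and the tetrahedra of $K$ meeting $v$: a $1$-chain of $C_1(\mathcal D,L)$ is a combination of the edges $c_{ij}^{\mu}$ (with $i=v$ in the labelling $T_{\mu}=(ijkl)$), and $\Omega^2$ is block-diagonal over the tetrahedra $T_{\mu}$. The only general fact I need is the reformulation $\mathrm{Ker}(F^{*}\circ p)=\mathrm{Im}(F)^{\perp_{\Omega^2}}$, which is immediate from the definitions: $F^{*}(p(v))=0$ exactly when $\Omega^2(v,F(\bar\alpha))=p(v)(F(\bar\alpha))=0$ for every internal edge or face $\bar\alpha$ of $K$.

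For the second inclusion I would start from the fact that the $2$-cells of $\mathcal D$ are indexed by the vertices $w$ of the triangulated torus $S$, and that such a $w$ corresponds to an edge $e=vv'$ of $K$: turning once around $w$ in $S$ is turning once around $e$ in $K$. Hence a generator of $B_1(\mathcal D,L)$ is $\partial\bigl(\sigma_w\otimes\begin{pmatrix}n\\m\end{pmatrix}\bigr)=\sum_{\mu}c_{ij}^{\mu}\otimes\begin{pmatrix}n\\m\end{pmatrix}$, the sum running over the tetrahedra $T_{\mu}=(ijkl)$ sharing $e$, with $ij=e$ fixed. Applying $h$ gives $2m\sum_{\mu}e_{ij}^{\mu}+2n\sum_{\mu}e_{ji}^{\mu}+n\sum_{\mu}(e_{ijk}^{\mu}+e_{ilj}^{\mu})$. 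The first two sums are precisely $2m\,F(\bar e_{ij})$ and $2n\,F(\bar e_{ji})$. For the face terms I would pair consecutive tetrahedra in the necklace around $e$: the common face $f$ of $T_{\mu}$ and $T_{\mu+1}$ through $e$ has third vertex $k_{\mu}=l_{\mu+1}$, so it contributes $e_{ijk_{\mu}}^{\mu}$ from $T_{\mu}$ and $e_{il_{\mu+1}j}^{\mu+1}=e_{ik_{\mu}j}^{\mu+1}$ from $T_{\mu+1}$, which together are exactly $F(\bar f)$. Since the necklace is closed, every internal face through $e$ is obtained once, so the face sum equals $\sum_{\bar f}F(\bar f)\in\mathrm{Im}(F)$. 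Thus $h$ of the generator lies in $\mathrm{Im}(F)$; the harmless sign/$2$-torsion discrepancies coming from the conventions on $e_{ilj}$ versus $e_{ijl}$ (already noted in the discussion of $p^{*}(a)=z$) are absorbed in the $\mathrm{Ker}(p)$ summand, which yields $h(B_1(\mathcal D,L))\subset\mathrm{Ker}(p)+\mathrm{Im}(F)$.

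For the first inclusion I would show that $F^{*}\circ p\circ h$ vanishes on $Z_1(\mathcal D,L)=\mathrm{Ker}(\partial)$ by a telescoping argument. The coefficient of $F^{*}(p(h(c)))$ along an internal edge or face $\bar\alpha$ is $\Omega^2(h(c),F(\bar\alpha))$, and by block-diagonality this is a sum of local Fock–Goncharov pairings over the tetrahedra in which both $h(c)$ and $F(\bar\alpha)$ are supported. The key point I would establish by inspection is that the contribution of a single $\mathcal D$-edge $c_{ij}^{\mu}$ to this coefficient can be written as $\psi_{\bar\alpha}(\mathrm{head})-\psi_{\bar\alpha}(\mathrm{tail})$ for a function $\psi_{\bar\alpha}$ on the vertices of $\mathcal D$ (with lattice coefficients), so that summing over the closed necklace of tetrahedra around $\bar\alpha$ the contributions depend only on $\partial c$. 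For a cycle $\partial c=0$, whence $\Omega^2(h(c),F(\bar\alpha))=0$ for every internal $\bar\alpha$, i.e. $h(c)\in\mathrm{Im}(F)^{\perp_{\Omega^2}}=\mathrm{Ker}(F^{*}\circ p)$.

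The routine but delicate part — the genuine content of "checked by inspection" — is the sign bookkeeping dictated by the quiver numbers $\varepsilon_{\alpha\beta}$: one must track the induced orientation of the red $2$-triangulation on each face, the distinction between $e_{ilj}$ and $e_{ijl}$, and the left/right turning conventions relating $c_{ij}^{\mu}$ to the vectors $e_{ij}^{\mu},e_{ji}^{\mu},e_{ijk}^{\mu},e_{ilj}^{\mu}$. I expect the main obstacle to be the telescoping in the first inclusion, namely verifying that the local pairings $\Omega^2(h(c),F(\bar\alpha))$ really do depend only on $\partial c$, since this is exactly where the precise structure of $\Omega^2$ on a tetrahedron enters; once the two model cases ($\bar\alpha$ an edge, $\bar\alpha$ a face) are settled, the general statement follows by linearity.
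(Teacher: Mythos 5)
Your reduction of the first inclusion to $\mathrm{Ker}(F^{*}\circ p)=\mathrm{Im}(F)^{\perp_{\Omega^2}}$ and your computation for the polygons of $\mathcal D$ surrounding the vertices of the triangulated link are correct (and in fact cleaner than you fear: the face generators of $J^2_{T_\mu}$ are indexed by \emph{unoriented} faces, so $e_{ilj}^{\mu}=e_{ijl}^{\mu}$ and the necklace sum of face terms is exactly $\sum_{\bar f}F(\bar f)$; the sign/$2$-torsion caveat in the paper concerns the coordinate values $a_{ijl}$ versus $a_{ilj}$, not the basis vectors, so no $\mathrm{Ker}(p)$ correction is needed for these generators). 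The genuine gap is in your identification of the $2$-cells of $\mathcal D$: they are \emph{not} only the polygons around the vertices of the triangulation. For $\mathcal D$ to be a cell decomposition of the torus $S$ --- which the paper needs, since $H_1(\mathcal D,L)$ must be identified with $H_1(S,L)$ for corollary \ref{corhom} --- there must also be one $2$-cell inside each triangle of the triangulated link, bounded by the three edges $c_{ij}^{\mu},c_{ik}^{\mu},c_{il}^{\mu}$ of $\mathcal D$ lying in that triangle (the cell corresponding to the corner of $T_{\mu}=(ijkl)$ at the vertex $i$). Your argument never treats these boundaries. Applying $h$ to $\pm(c_{ij}^{\mu}+c_{ik}^{\mu}+c_{il}^{\mu})\otimes\begin{pmatrix}n\\m\end{pmatrix}$ gives $\pm\bigl(2m(e_{ij}^{\mu}+e_{ik}^{\mu}+e_{il}^{\mu})+2n(e_{ji}^{\mu}+e_{ki}^{\mu}+e_{li}^{\mu}+e_{ijk}^{\mu}+e_{ilj}^{\mu}+e_{ikl}^{\mu})\bigr)=\pm(2m\,v_i^{\mu}+2n\,w_i^{\mu})$, which lies in $\mathrm{Ker}(p)$ by the description of $\mathrm{Ker}(\Omega^2)$ in section \ref{ss:NZsympl}. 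This is precisely why the statement contains the summand $\mathrm{Ker}(p)$ --- not to absorb sign discrepancies, as you suggest.

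For the first inclusion your telescoping strategy is the right one: it amounts to showing that $g(F(\bar\alpha))$ is a boundary in $C_1(\mathcal D',L')$ for each internal edge or face $\bar\alpha$, which is exactly the dual statement the paper later derives \emph{from} this lemma. But you have only announced the inspection, not performed it, so as written the argument for $h(Z_1(\mathcal D,L))\subset\mathrm{Ker}(F^{*}\circ p)$ is a plan rather than a proof. To be fair, the paper itself dismisses the whole lemma as ``easily checked by inspection,'' so you are not taking a different route; but when you carry out the check, the cycle and boundary spaces of $\mathcal D$ must be taken with respect to the full cell structure described above, otherwise $Z_1$ is too large and $B_1$ too small.
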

In particular $h$ induces a map $\bar{h} : H_1 (\mathcal D , L) \to \mathcal{H} (J)$ in homology. By duality, the map $g$ induces a map 
$\bar{g} : \mathcal{H} (J) \to H_1 (\partial M,\mathcal D' , L')$ as follows from:
\begin{lemma}
We have:
$$g(\mathrm{Ker}(F^* \circ p))\subset Z_1(\mathcal D',L'),$$
and
$$g(\mathrm{Ker} (p) + \mathrm{Im} (F))\subset B_1(\mathcal D',L').$$
\end{lemma}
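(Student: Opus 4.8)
The plan is to obtain both inclusions as the formal ``transpose'' of the previous lemma, using the defining relation \eqref{g} of $g$, namely $\omega(c,g(e)) = \Omega^2(e,h(c))$, together with Poincaré duality on the link torus. The first ingredient I would record is that, since $\omega$ identifies $C_1(\mathcal D',L')$ with the dual of $C_1(\mathcal D,L)$ and carries the boundary operator of $\mathcal D'$ to the transpose of that of $\mathcal D$, one has
$$Z_1(\mathcal D',L') = B_1(\mathcal D,L)^{\perp_\omega} \qquad\text{and}\qquad B_1(\mathcal D',L') = Z_1(\mathcal D,L)^{\perp_\omega},$$
where $\perp_\omega$ denotes the orthogonal for $\omega$. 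These are the usual Poincaré--Lefschetz duality statements for the two dual cell divisions of the closed connected oriented surface $S$, and can be checked cell by cell.

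For the first inclusion, take $e\in\mathrm{Ker}(F^*\circ p)$ and an arbitrary boundary $b\in B_1(\mathcal D,L)$. By \eqref{g} we have $\omega(b,g(e))=\Omega^2(e,h(b))$, and the previous lemma gives $h(b)=u+F(w)$ with $u\in\mathrm{Ker}(p)$. Now $\Omega^2(e,u)=-p(u)(e)=0$ since $p(u)=0$, while $\Omega^2(e,F(w))=p(e)(F(w))=(F^*\circ p)(e)(w)=0$ since $e\in\mathrm{Ker}(F^*\circ p)$. Hence $\omega(b,g(e))=0$ for every $b\in B_1(\mathcal D,L)$, that is $g(e)\in B_1(\mathcal D,L)^{\perp_\omega}=Z_1(\mathcal D',L')$.

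For the second inclusion, take $e\in\mathrm{Ker}(p)+\mathrm{Im}(F)$, write $e=u+F(w)$ with $u\in\mathrm{Ker}(p)$, and let $z\in Z_1(\mathcal D,L)$ be any cycle. The previous lemma gives $h(z)\in\mathrm{Ker}(F^*\circ p)$, and \eqref{g} yields $\omega(z,g(e))=\Omega^2(e,h(z))=\Omega^2(u,h(z))+\Omega^2(F(w),h(z))$. The first term vanishes because $p(u)=0$, and the second equals $-(F^*\circ p)(h(z))(w)=0$ because $h(z)\in\mathrm{Ker}(F^*\circ p)$. Thus $\omega(z,g(e))=0$ for every cycle $z$, i.e. $g(e)\in Z_1(\mathcal D,L)^{\perp_\omega}=B_1(\mathcal D',L')$.

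The only genuinely nonroutine point is the pair of duality identities, and within it the delicate half is the inclusion $Z_1(\mathcal D,L)^{\perp_\omega}\subseteq B_1(\mathcal D',L')$ used in the second part. Its companion $Z_1(\mathcal D',L')=B_1(\mathcal D,L)^{\perp_\omega}$ is the formal fact that the kernel of the transpose of the boundary map of $\mathcal D$ is the orthogonal of its image; the one needed here instead requires that $B_1(\mathcal D',L')$ be saturated in $C_1(\mathcal D',L')$, i.e. that the relevant cokernel be torsion free. This holds because $S$ is a connected closed orientable surface and $L'$ is by definition the $[\cdot,\cdot]$-dual lattice of $L$, so that $\omega$ is a perfect pairing over $\bZ$; granting this, the computations above are purely formal and the lemma follows.
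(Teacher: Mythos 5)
Your proof is correct and follows essentially the same route as the paper: identify $Z_1(\mathcal D',L')$ and $B_1(\mathcal D',L')$ as $\omega$-orthogonals of $B_1(\mathcal D,L)$ and $Z_1(\mathcal D,L)$, then transfer everything through the adjointness relation $\omega(c,g(e))=\Omega^2(e,h(c))$ and the previous lemma. You merely spell out the second inclusion, which the paper dismisses as ``similar,'' and you rightly flag the saturation of $B_1(\mathcal D',L')$ (perfectness of $\omega$ over $\bZ$ with the dual lattice $L'$) as the only point needing care.
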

\begin{proof}
First of all, $Z_1(\mathcal D',L')$ is the orthogonal of $B_1(\mathcal D,L)$ for the coupling $\omega$. Moreover, by definition of $g$, if $e\in\textrm{Ker} (F^* \circ p)$, we have: 
\begin{eqnarray*}
g(e)\in Z_1(\mathcal D',L') & \Leftrightarrow & \omega(B_1(\mathcal D,L),g(e))=0\\
& \Leftrightarrow & \Omega^2(h(B_1(\mathcal D,L)),e)=0.
\end{eqnarray*}
The last condition is given by the previous lemma. The second point is similar.
\end{proof}
Note that $H_1 (\mathcal D , L)$ and $H_1 (\mathcal D' , L')$ are canonically isomorphic so that we identified them (to $H_1 (\partial M , L)$) 
in the following. 
\begin{theorem} \label{thm:homologies}
\begin{enumerate}
\item The map $\bar{g} \circ \bar{h} : H_1 (\partial M , L ) \to H_1 (\partial M , L)$ is multiplication by~$4$.
\item Given $e \in \mathcal{H} (J)$ and $c \in H_1 (\partial M, L)$, we have 
$$\omega (c, \bar{g} (e)) = \Omega (e , \bar{h} ( c)).$$
\end{enumerate}
\end{theorem}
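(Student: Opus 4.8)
The plan is to prove part (2) first, as a formal consequence of the adjunction already built into the definition of $g$, and then to deduce part (1) from (2) together with the non-degeneracy of $\omega$ and one explicit bilinear computation carried out tetrahedron by tetrahedron. This last computation, which is the genuine $\PGL(3,\bC)$ analogue of the Neumann--Zagier bilinear relation, is where essentially all the work lies.

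For part (2), recall that the form $\Omega$ on $\mathcal{H}(J)$ is induced from the form $\Omega$ on $J$, which satisfies $\Omega(\pi x, \pi y) = \Omega^2(x,y)$ for the projection $\pi : J^2 \to J$. I would represent $e \in \mathcal{H}(J)$ by an element $\hat e \in \mathrm{Ker}(F^* \circ p) \subset J^2$ and $c \in H_1(\partial M, L)$ by a cycle $\tilde c \in Z_1(\mathcal D, L)$. By the two lemmas preceding the theorem, $g(\hat e)$ is a cycle representing $\bar g(e)$, and $h(\tilde c)$ lies in $\mathrm{Ker}(F^* \circ p)$ with projection representing $\bar h(c)$. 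Relation \eqref{g} then reads $\omega(\tilde c, g(\hat e)) = \Omega^2(\hat e, h(\tilde c))$; its left-hand side is $\omega(c, \bar g(e))$ because the intersection coupling $\omega$ descends to homology, while its right-hand side is $\Omega^2(\hat e, h(\tilde c)) = \Omega(\pi \hat e, \pi h(\tilde c)) = \Omega(e, \bar h(c))$ by compatibility of $\Omega$ with $\pi$. Independence of the chosen representatives is exactly what those two lemmas guarantee, so (2) follows.

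For part (1), I would specialize (2) to $e = \bar h(c)$, obtaining
$$\omega(c', \bar g \bar h(c)) = \Omega(\bar h(c), \bar h(c'))$$
for all $c, c' \in H_1(\partial M, L)$. Since the intersection form $\iota$ is a perfect pairing on $H_1(S)$ (Poincar\'e duality on the torus) and $[\cdot,\cdot] : L \times L' \to \bZ$ is perfect by the very definition of $L'$, the form $\omega$ identifies $H_1(\mathcal D', L')$ with the dual of $H_1(\mathcal D, L)$. Hence it suffices to prove the homology-level identity
$$\Omega(\bar h(c), \bar h(c')) = 4\,\omega(c, c'),$$
after which non-degeneracy of $\omega$ forces $\bar g \bar h$ to be multiplication by $4$ (under the canonical identification of $H_1(\mathcal D, L)$ with $H_1(\mathcal D', L')$, the sign conventions being arranged to give $+4$). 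Lifting to chains, this reduces to $\Omega^2(h(\tilde c), h(\tilde c')) = 4\,\omega(\tilde c, \tilde c')$ for $1$-cycles $\tilde c, \tilde c'$ on the link tori.

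The proof of this last identity is the main obstacle. The computation localizes, since $\Omega^2$ is the orthogonal sum of the forms $\Omega^2$ on the $J^2_{T_\mu}$ and $h$ is defined one tetrahedron at a time: writing $\Omega^2(h(\tilde c), h(\tilde c')) = \sum_{e,e'} \Omega^2(h(e), h(e'))$ over the edges of $\tilde c$ and $\tilde c'$, only pairs of $\mathcal D$-edges lying in a common tetrahedron contribute. Within a single tetrahedron one inserts the definition $h\left(e \otimes \begin{pmatrix} n \\ m \end{pmatrix}\right) = 2m\, e_{ij}^\mu + 2n\, e_{ji}^\mu + n(e_{ijk}^\mu + e_{ilj}^\mu)$ and evaluates $\Omega^2$ using $\Omega^2(e_\alpha, e_\beta) = \varepsilon_{\alpha\beta}$ (in particular $\Omega^2(e_{ji}, e_{ijk}) = 1$). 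The factor $4 = 2\cdot 2$ comes from the coefficient $2$ in the edge part of $h$ entering twice, while the entries $2, -1$ of the Killing form (the Cartan matrix of $A_2$) are reproduced by combining the edge coefficients $2m, 2n$ with the face coefficients $n$ under $\Omega^2$. The delicate point is the bookkeeping of orientations and of the left/right turning conventions around each edge, so that for transverse representatives the face contributions telescope correctly and the local terms assemble into $\iota(c,c')\,[l,l']$ precisely at the intersection points of the two loops; this is where care is required and is the heart of the section.
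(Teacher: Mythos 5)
Your part (2) is the paper's own argument: it is the adjunction \eqref{g} descended to homology, with the two preceding lemmas guaranteeing that the representatives can be chosen compatibly. Your reduction of part (1) to the bilinear identity $\Omega (\bar h (c) , \bar h (c')) = \pm 4\, \omega (c,c')$ and then invoking non-degeneracy of $\omega$ is a legitimate reformulation of what the paper does (the paper instead feeds the local pairings $\Omega (h(\cdot ), h(\cdot ))$ back through the adjunction to get an explicit formula for the chain $g \circ h (c_{ij} \otimes l)$ in $C_1 (\mathcal D' , L')$ and identifies its homology class directly). Two small corrections: the sign is $\bar h^* \Omega = -4 \omega$, which by antisymmetry of $\omega$ is what is consistent with $\bar g \circ \bar h = +4$; and your chain-level identity $\Omega^2 (h(\tilde c), h(\tilde c')) = 4 \omega (\tilde c , \tilde c')$ does not typecheck, since $\omega$ couples $C_1 (\mathcal D , L)$ with $C_1 (\mathcal D' , L')$ and only becomes an intersection form on both arguments after passing to $H_1$.

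The genuine gap is in the step you yourself flag as the heart of the section, and the way you describe it would not succeed. The sum $\Omega^2 (h(\tilde c) , h(\tilde c')) = \sum \Omega^2 (h(e) , h(e'))$ does \emph{not} localize at the intersection points of $c$ and $c'$: by the local computations, essentially every pair of $\mathcal D$-edges of $\tilde c$ and $\tilde c'$ lying in a common tetrahedron contributes a nonzero term (for instance $2 [l , l']$ for $c_{ij}$ against $c_{ik}$, and nonzero terms for $c_{ij}$ against $c_{jk}$ or $c_{ki}$, i.e.\ for edges sitting in the links of \emph{different} vertices), including arbitrarily many pairs far from any crossing of the two loops; no termwise ``telescoping'' of face contributions kills these. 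What actually makes the computation close up is global and homological: after applying \eqref{g}, the chain $g \circ h (c \otimes l)$ is explicitly $2 \sum (c_{ik}' - c_{il}') \otimes l$ plus a second family $2\sum (c_{ki}' - c_{kj}' + c_{jl}' - c_{jk}' + c_{lj}' - c_{li}')$ tensored with a transformed lattice vector, and one must prove — this is Neumann's Lemma 4.3 — that the first family is the boundary of a regular neighbourhood of $c$, hence homologous to $2c$, while the second (the ``far from the cusp'' contribution, living in the links of the other vertices) is a sum of boundaries, hence null-homologous; pairing a cycle against a boundary then kills all the spurious local terms at once. Without this lemma, which uses that $\tilde c$ and $\tilde c'$ are cycles and cannot be seen tetrahedron by tetrahedron, the local bookkeeping you propose does not assemble into $4 \iota (c , c') [l , l']$, and it is this lemma rather than the evaluation of $\Omega^2$ on the $h$-images that carries the content of part (1).
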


As a corollary, one understands the homology of the various complexes.

\begin{cor} \label{corhom}
The map $\bar h$ induces an isomorphism from $H_1 (\partial M, L)  \left[ \frac12 \right]$ to $\mathcal{H} (J) \left[ \frac12 \right]$. Moreover we have $\bar h^* \Omega = - 4 \omega$.
\end{cor}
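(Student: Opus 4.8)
The plan is to deduce the corollary formally from Theorem \ref{thm:homologies}, treating the identity $\bar h^* \Omega = -4\omega$ and the injectivity of $\bar h$ as immediate consequences, and isolating the surjectivity of $\bar h$ as the one point that requires genuine input.

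First I would record the form identity. By definition $(\bar h^* \Omega)(c, c') = \Omega(\bar h(c), \bar h(c'))$ for $c, c' \in H_1(\partial M, L)$. Applying part (2) of Theorem \ref{thm:homologies} with $e = \bar h(c')$ gives $\omega(c, \bar g(\bar h(c'))) = \Omega(\bar h(c'), \bar h(c))$, and part (1) rewrites the left-hand side as $\omega(c, 4c') = 4\omega(c, c')$. Since $\Omega$ is skew-symmetric, $\Omega(\bar h(c'), \bar h(c)) = -(\bar h^* \Omega)(c, c')$, whence $\bar h^* \Omega = -4\omega$. This is the ``Moreover'' clause, and it already yields injectivity of $\bar h$ over $\bZ\left[\frac12\right]$: as $\omega$ is nondegenerate and $4$ is invertible, $\bar h^*\Omega$ is nondegenerate, so $\bar h$ cannot kill a nonzero class. (Injectivity also follows directly from part (1), since $\bar g \bar h = 4$ is invertible over $\bZ\left[\frac12\right]$, so $\frac14\bar g$ retracts $\bar h$ and $\bar h$ is even split injective.)

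Next I would attack surjectivity. The first step is to observe that $\Omega$ descends to a \emph{nondegenerate} form on $\mathcal{H}(J)\left[\frac12\right]$, by a symplectic-reduction argument using the relation $\mathrm{Ker}(F') = \mathrm{Im}(G)^{\perp_{\Omega}}$ recorded before the theorem: if $e \in \mathrm{Ker}(G)$ is $\Omega$-orthogonal to all of $\mathrm{Ker}(G)$, then $e \in \mathrm{Ker}(G)^{\perp_{\Omega}} = \mathrm{Im}(F')$ (the double orthogonal being the identity since $\Omega$ is nondegenerate on $J$), so $e$ vanishes in $\mathcal{H}(J)$. Thus $(\mathcal{H}(J)\left[\frac12\right], \Omega)$ is symplectic, and by $\bar h^* \Omega = -4\omega$ the image $U = \mathrm{Im}(\bar h)$ is a symplectic subspace, giving $\mathcal{H}(J)\left[\frac12\right] = U \oplus U^{\perp_{\Omega}}$. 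For $e \in U^{\perp_{\Omega}}$ and any $c$, part (2) gives $\omega(c, \bar g(e)) = \Omega(e, \bar h(c)) = 0$ since $\bar h(c) \in U$; nondegeneracy of $\omega$ then forces $\bar g(e) = 0$, so $U^{\perp_{\Omega}} \subseteq \mathrm{Ker}(\bar g)$. Hence surjectivity of $\bar h$ is equivalent to the vanishing $U^{\perp_{\Omega}} = 0$.

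The hard part will be precisely this vanishing, and I expect it to be the main obstacle, because no purely formal manipulation closes the gap: parts (1) and (2) only show that $\bar h \bar g$ is $4$ times the projection onto $U$ along $U^{\perp_{\Omega}}$ (one checks $(\bar h\bar g)^2 = 4\,\bar h\bar g$ directly from part (1)), which equals $4\,\mathrm{id}$ exactly when $U^{\perp_{\Omega}} = 0$. What is genuinely needed is the rank equality $\dim \mathcal{H}(J)\left[\frac12\right] = \dim H_1(\partial M, L)\left[\frac12\right]$, the latter being $4\tau$ since each torus link contributes $H_1(S, L) \cong \bZ^4$. I would establish this by a Neumann--Zagier-style count on the complex $C_1^{\rm or} + C_2 \xrightarrow{F'} J \xrightarrow{G} C_1^{\rm or} + C_2$: its self-duality under $\Omega$ identifies $\mathrm{Ker}(F')$ with the cokernel of $G$, so an Euler-characteristic computation expresses $\dim \mathcal{H}(J)$ through $\dim J$, $\dim(C_1^{\rm or} + C_2)$ and $\dim \mathrm{Ker}(F')$, and the remaining work is to match this with the boundary count using that $|K|-|K^{(0)}|$ retracts onto a manifold $M$ with torus boundary. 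Once the rank equality is known, the split injection $\bar h$ between finitely generated free modules of equal rank over the principal ideal domain $\bZ\left[\frac12\right]$ has a free complement of rank zero, hence is onto, completing the proof.
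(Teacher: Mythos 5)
Your formal skeleton is exactly the paper's: injectivity of $\bar h$ from $\bar g \circ \bar h = 4$ (Theorem \ref{thm:homologies}~(1)), the identity $\bar h^* \Omega = -4\omega$ from parts (1) and (2) together with skew-symmetry, and surjectivity reduced to the rank equality $\dim \mathcal{H}(J) = \dim H_1(\partial M, L) = 4l$ followed by the split-injection argument over $\bZ\left[\frac12\right]$. Those deductions are correct. (The symplectic-reduction paragraph is ultimately dead weight: your final argument does not use the decomposition $U \oplus U^{\perp_{\Omega}}$ at all, and over $\bZ\left[\frac12\right]$ the step $\mathrm{Ker}(G)^{\perp_{\Omega}} = \mathrm{Im}(F')$ would anyway need $\mathrm{Im}(F')$ to be saturated; you can delete that paragraph without loss.)

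The genuine gap is the rank equality itself, which you flag but only sketch. Your Euler-characteristic reduction is on the right track: adjointness of $F'$ and $G$ gives $\mathrm{rk}(F') = \mathrm{rk}(G)$, hence $\dim \mathcal{H}(J) = \dim J - 2\,\mathrm{rk}(F') = 2\dim \mathrm{Ker}(F')$ using $\dim J = 8N$ and $\dim(C_1^{\rm or} + C_2) = 4N$ (the latter needs $\chi(M)=0$). But this only trades the problem for computing $\dim \mathrm{Ker}(F')$, i.e. $\dim\bigl(\mathrm{Ker}(p) \cap \mathrm{Im}(F)\bigr)$, and that computation is where the actual geometric content lives; no boundary-retraction argument produces it for free. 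The paper does this in Lemma \ref{lem:dim}: on each tetrahedron $\mathrm{Ker}(p)$ is spanned by the explicit vectors $v_i^{\mu}$ and $w_i^{\mu}$ of \S\ref{ss:NZsympl}, the gluing conditions defining $\mathrm{Im}(F)$ force the intersection to be spanned by the $2l$ vectors $F(v_i)$, $F(w_i)$ indexed by the vertices $i$ of $K$, and one checks these are independent. Without this identification your proof is a correct reduction, not a proof; with it, it coincides with the paper's.
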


\begin{cor}\label{corkab}
 The form $\Omega^*$ on $k^\times \otimes J^*\cap \mathrm{Ker}(F^*)\left[\frac12\right]$ is the pullback of $\mathrm{wp}$ on $H^1(\partial M,L^*)$ by the map $R$. 
\end{cor}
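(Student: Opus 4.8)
The plan is to prove the equality of forms after descending both sides to $\mathcal{H}(J^*)\left[ \frac12 \right]$, the natural space through which a decoration factors. Recall that a decoration gives $z \in k^{\times} \otimes (J^* \cap \mathrm{Ker}(F^*))\left[ \frac12 \right] = k^{\times}\otimes \mathrm{Ker}((F')^*)\left[ \frac12 \right]$, that $\Omega^*$ induces a skew form on $\mathcal{H}(J^*) = (\mathrm{Ker}(F^*)\cap \mathrm{Im}(p))/\mathrm{Im}(p\circ F)$, and that $\delta(\beta(K))=\frac12 z\wedge_{\Omega^*}z$ depends only on the image of $z$ there. So first I would check that $R$ also factors through $\mathcal{H}(J^*)\left[ \frac12 \right]$: it kills $k^{\times}\otimes \mathrm{Im}(p\circ F)\left[ \frac12 \right]$, since this subspace corresponds to the unipotent decorations, whose peripheral holonomy is unipotent, hence has trivial diagonal part $C=C^*=1$ and $R=0$. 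Thus it suffices to establish $\Omega^* = R^*\mathrm{wp}$ on $\mathcal{H}(J^*)\left[ \frac12 \right]$.

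The crux is to identify $R$ with a multiple of the transpose of the isomorphism $\bar{h}$. The linearization identity $z\circ h = R(z)^2$ of \S\ref{ss:linearizationholonomy} says exactly that, writing $\bar{h}:H_1(\partial M,L)\left[ \frac12 \right]\to \mathcal{H}(J)\left[ \frac12 \right]$ for the induced map and viewing $z$ as an element of $k^{\times}\otimes \mathcal{H}(J^*)\left[ \frac12 \right] = k^{\times}\otimes \mathcal{H}(J)^*\left[ \frac12 \right]$, the pullback $\bar{h}^*(z)=z\circ \bar{h}$ equals $R(z)^2$. In additive notation on the $k^{\times}$-modules this reads $R = \tfrac12\,\bar{h}^*$, i.e.\ $R = \mathrm{id}_{k^{\times}}\otimes \psi$ with $\psi = \tfrac12\,\bar{h}^*$ a $\bZ\left[ \frac12 \right]$-linear map. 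In particular $R$ is an isomorphism onto $k^{\times}\otimes H^1(\partial M,L^*)\left[ \frac12 \right]$, since $\bar{h}$ is one by Corollary \ref{corhom}.

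Next I would invoke Corollary \ref{corhom}, namely that $\bar{h}$ is an isomorphism with $\bar{h}^*\Omega = -4\omega$, together with the fact that $\Omega^*$ and $\mathrm{wp}$ are by definition the forms dual (under the respective symplectic self-identifications) to $\Omega$ and to $\omega$. A direct computation with these identifications shows that, for an isomorphism $\phi$ of symplectic modules with $\phi^*\Omega = \lambda\omega$, the transpose $\phi^*$ pulls back the dual form of $\omega$ to $\lambda$ times the dual form of $\Omega$. Applying this with $\phi=\bar{h}$ and $\lambda=-4$ gives $(\bar{h}^*)^*\mathrm{wp} = -4\,\Omega^*$ on $\mathcal{H}(J^*)\left[ \frac12 \right]$; combined with $R=\tfrac12\bar{h}^*$ this yields
$$R^{*}\mathrm{wp} \;=\; \tfrac14\,(\bar{h}^{*})^{*}\mathrm{wp} \;=\; \pm\,\Omega^{*},$$
the factor $\tfrac14$ cancelling the $-4$, and the residual overall sign being pinned to $+$ by the orientation conventions fixing $\omega$ (via $\iota\otimes[\cdot,\cdot]$) and the identification $H_1(\mathcal D,L)\cong H_1(\mathcal D',L')$. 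Finally, since $R$ is the $k^{\times}$-extension of the $\bZ\left[ \frac12 \right]$-linear map $\psi$, Lemma \ref{lemfacile} promotes this equality of $\bZ\left[ \frac12 \right]$-valued forms to the asserted equality of $k^{\times}$-valued wedge products, $z\wedge_{\Omega^*}z' = R(z)\wedge_{\mathrm{wp}}R(z')$.

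The step I expect to be the main obstacle is the scalar-and-sign bookkeeping in the symplectic duality: one must carefully verify that descending $\Omega^*$ and $\mathrm{wp}$ to homology, dualizing the relation $\bar{h}^*\Omega=-4\omega$, and accounting for the factor $\tfrac12$ in $R=\tfrac12\bar{h}^*$ combine to give exactly $\pm\Omega^*$, and that the conventions chosen for $\omega$ and for the dual lattice $L'$ fix the sign as stated in the corollary. Everything else is the formal reduction to $\mathcal{H}(J^*)\left[ \frac12 \right]$, the identification $R=\tfrac12\bar{h}^*$ from \S\ref{ss:linearizationholonomy}, and the passage from $\bZ\left[ \frac12 \right]$-forms to $k^{\times}$-wedges via Lemma \ref{lemfacile}.
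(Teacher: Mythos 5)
Your proposal is correct and follows essentially the same route as the paper: reduce to the homology $\mathcal{H}(J^*)\left[\frac12\right]$, use the identity $z\circ h=R(z)^2$ (i.e.\ $h^*$ induces the square of $R$), invoke Corollary \ref{corhom} together with the symplectic identification of $(H_1(\partial M,L'),\omega)$ with $(H^1(\partial M,L^*),\mathrm{wp})$, and let the factor $4$ from $\bar h^*\Omega=-4\omega$ cancel against the factor $4$ coming from the square. The sign bookkeeping you flag as the main obstacle is indeed the one point the paper itself glosses over, so there is nothing missing relative to the published argument.
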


Theorem \ref{thm:kabaya} will follow from corollary \ref{corkab} and lemma \ref{lemfacile} (see section \ref{ss:kabaya2->kabaya} for an explicit computation). Corollary \ref{corkab}  is indeed the analog of proposition \ref{Thm:unipotent} in the closed case. We postpone the proof of theorem \ref{thm:homologies} until the next section and, in the remaining part of this section, deduce corollaries \ref{corhom} and \ref{corkab} from it.

\subsection{Proof of corollary \ref{corkab}} We first compute the dimension of the spaces $\mathcal H(J)$ and $\mathcal H(J^*)$. 
Recall that $l$ is the number of vertices in $K$.
\begin{lemma}\label{lem:dim}
The dimension of $\mathcal H(J)$ and $\mathcal H(J^*)$ is $4l$
\end{lemma}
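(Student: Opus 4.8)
The plan is to reduce the statement to a single combinatorial count — namely $\dim \mathrm{Ker}(F') = 2l$ — and then verify that count by exhibiting an explicit basis indexed by the vertices of $K$. Throughout I work with the complex \eqref{complexd}, and I use that $\dim \mathcal H(J) = \dim \mathcal H(J^*)$ (these are dual spaces, as already noted).

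First I would record the elementary facts I need. Since $p$ identifies \eqref{complexd} with the dual complex \eqref{complex} (indeed $G^* = p\circ F$ is the isomorphism $J\cong J^*=\mathrm{Im}(p)$ composed with $F'$, and $G = F^*\circ p$ is $(F')^*$ composed with it), one gets $\rank(G)=\rank(F')$. Next I fix the ambient dimensions. As $K$ is closed, each of its $N$ tetrahedra contributes an $8$-dimensional $J_{T_\nu}$, so $\dim J = 8N$; each face is shared by exactly two tetrahedra, so the number of faces is $\mathcal F = 2N$ and $\dim C_2 = 2N$. The number of edges comes from an Euler characteristic computation: the space $|K|$ is obtained from the compact manifold $M$, whose boundary is $l$ tori (so $\chi(M)=0$), by coning off each boundary torus, and each cone contributes $+1$; hence $\chi(|K|)=l$. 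On the other hand $\chi(|K|)=l-E+\mathcal F-N=l-E+N$, so $E=N$ and $\dim C_1^{\rm or}=2E=2N$, giving $\dim(C_1^{\rm or}+C_2)=4N$.

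With these in hand, rank–nullity yields
$$\dim \mathcal H(J)=\dim\mathrm{Ker}(G)-\rank(F')=\big(\dim J-\rank(G)\big)-\rank(F')=8N-2\rank(F'),$$
and since $\rank(F')=\dim(C_1^{\rm or}+C_2)-\dim\mathrm{Ker}(F')=4N-\dim\mathrm{Ker}(F')$, this collapses to $\dim\mathcal H(J)=2\,\dim\mathrm{Ker}(F')$. Everything thus comes down to $\dim\mathrm{Ker}(F')=2l$. Because $F$ is injective (its defining values have pairwise disjoint supports in $J^2$) one has $\mathrm{Ker}(F')=\mathrm{Im}(F)\cap\mathrm{Ker}(\Omega^2)$: the elements of $\mathrm{Im}(F)$ whose component in each $J^2_{T_\nu}$ lies in the span of the vectors $v_i$ and $w_i$. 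For each vertex $P\in K^{(0)}$ I would write down the two elements $\tilde v_P=\sum_{(\mu,i)\,:\,i\mapsto P} v_i^\mu$ and $\tilde w_P=\sum_{(\mu,i)\,:\,i\mapsto P} w_i^\mu$, the sums running over all corners $(\mu,i)$ of tetrahedra of $K$ at $P$. A direct check shows each lies in $\mathrm{Im}(F)$: $\tilde v_P$ is the sum of $F(\bar e)$ over the oriented edges $\bar e$ pointing into $P$, while in $\tilde w_P$ the two copies of each face containing $P$ occur with equal coefficient. These $2l$ vectors are linearly independent — the face components isolate the $\tilde w_P$, and distinct vertices involve disjoint sets of in‑pointing edges — so $\dim\mathrm{Ker}(F')\geq 2l$.

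The main obstacle is the reverse inequality. Parametrising $\mathrm{Ker}(\Omega^2)$ by the coefficients $(s_i^\mu,t_i^\mu)$ of the $v_i^\mu,w_i^\mu$, membership in $\mathrm{Im}(F)$ becomes the consistency system requiring that $s_i^\mu+t_j^\mu$ be constant around each oriented edge (the edge condition) and that the vertex‑sums of the $t$'s over a common face agree (the face condition). The span of the $\tilde v_P,\tilde w_P$ is precisely the locus where $s_i^\mu$ and $t_i^\mu$ depend only on the image vertex of the corner, and the content of the upper bound is that the consistency system forces exactly this. I expect this to be the delicate point: the edge and face conditions couple the parameters across the several link tori touched by each simplex, so the argument must exploit that every link is a \emph{connected} surface in order to propagate the local constancy imposed by the two conditions into global constancy on each link. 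Once this is established one obtains $\dim\mathrm{Ker}(F')=2l$, whence $\dim\mathcal H(J)=\dim\mathcal H(J^*)=4l$.
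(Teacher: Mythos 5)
Your reduction is sound and, modulo bookkeeping, is the same reduction the paper makes: the paper also boils the lemma down to the single count $\dim(\mathrm{Ker}(p)\cap\mathrm{Im}(F))=2l$, which by injectivity of $F$ is your $\dim\mathrm{Ker}(F')=2l$, and it exhibits exactly your generators $F(v_i)$, $F(w_i)$ indexed by the vertices of $K$. Your route to the reduction is in fact a little cleaner than the paper's: you get $\mathrm{rank}(G)=\mathrm{rank}(F')$ in one stroke from the adjointness $\langle G(v),c\rangle=\Omega(v,F'(c))$, where the paper strings together four separate rank formulas; and your Euler-characteristic count of edges agrees with theirs.

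The genuine gap is that you never prove the reverse inequality $\dim\mathrm{Ker}(F')\le 2l$. You set up the right consistency system (the coordinate of a general element of $\mathrm{Ker}(p)$ on $e_{ij}^{\mu}$ is $s_i^{\mu}+t_j^{\mu}$ and on $e_{ijk}^{\mu}$ is $t_i^{\mu}+t_j^{\mu}+t_k^{\mu}$, and membership in $\mathrm{Im}(F)$ forces these to be constant over the tetrahedra sharing a given edge, resp.\ to agree across a shared face), and you correctly identify that the content of the upper bound is that this system forces $s_i^{\mu}$ and $t_i^{\mu}$ to depend only on the underlying vertex of $K$. But you then write ``I expect this to be the delicate point'' and ``once this is established,'' which is not a proof: as it stands your argument only yields $\dim\mathcal H(J)\ge 4l$, and the lemma is the equality. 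This is precisely the step the paper does carry (tersely): from the fact that an element of $\mathrm{Im}(F)$ has equal coordinates on all the $e_{ij}^{\mu}$ over a common edge of $K$ and matched coordinates on the two copies of each face, it concludes that $\mathrm{Im}(F)\cap\mathrm{Ker}(p)$ is generated by the $F(v_i)$ and $F(w_i)$. To close your proof you must actually solve the linear system — e.g.\ show that constancy of $s_i^{\mu}+t_j^{\mu}$ around each edge together with the face condition propagates, using connectivity of each link, to constancy of $s$ and $t$ on each vertex star. (A smaller quibble: your justification of linear independence of the $\tilde v_P,\tilde w_P$ is also too quick, since the supports of $\tilde w_P$ and $\tilde w_Q$ for adjacent $P,Q$ are not disjoint; one should instead argue that a vanishing combination forces $a_P=-b_Q$ for all adjacent pairs and $b_P+b_Q+b_R=0$ on faces, whence all coefficients vanish. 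But that part is easily repaired; the missing upper bound is the real issue.)
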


\begin{proof} By the rank formula we have 
\begin{equation*} \label{dim1}
\dim J^2 = \dim \mathrm{Ker} (F^* \circ p) + \dim \mathrm{Im} (F^* \circ p)
\end{equation*}
and by definition we have
\begin{equation*} \label{dim2}
\dim \mathrm{Ker} (F^* \circ p) = \dim (\mathrm{Ker} ( p) +  \mathrm{Im} (F)) + \dim \mathcal H(J).
\end{equation*}
We obviously have: 
\begin{equation*} \label{dim3}
\dim (\mathrm{Ker} ( p) +  \mathrm{Im} (F)) = \dim \mathrm{Ker} ( p) + \dim \mathrm{Im} (F) - \dim (\mathrm {Ker}(p)\cap \mathrm{Im}(F))
\end{equation*}
and 
\begin{equation*} \label{dim4}
\dim \mathrm{Im} (F^* \circ p) = \dim \mathrm{Im} (F^*) - \dim (\mathrm{Im}(p)\cap \mathrm {Ker}(F^*)).
\end{equation*}
The map $F$ is injective and therefore $F^*$ is surjective. We conclude that 
$$\dim \mathrm{Im} (F) = \dim \mathrm{Im} (F^*) =\dim C_1^{\rm or} + \dim C_2.$$
But $\dim J^2 = 16N$, $\dim \mathrm{Ker} ( p) = 8N$, $\dim C_2 = 2N$ and, since the Euler caracteristic of $M$ is $0$, $\dim C_1^{\rm or} = 2N$. We are therefore reduced to prove that $\dim(\mathrm {Ker}(p)\cap \mathrm{Im}(F))=2l$. 
Restricted to a single tetrahedron $T_\mu$, the kernel of $p$ is generated by the elements $v_i^\mu=e_{ij}^\mu+e_{ik}^\mu+e_{il}^\mu$ and $w_i^\mu=e_{ji}^\mu+e_{ki}^\mu+e_{li}^\mu+e_{ijk}^\mu+e_{ilj}^\mu+e_{ikl}^\mu$ in $J^2(T_\mu)$ for $i$ a vertex of $T_\mu$ (see section \ref{ss:NZsympl}).
 
In $\mathrm{Im}(F)$, all the coordinates of $e_{ij}^\mu$ that projects on the same edge $\bar e_{ij}$ must be equal, as does the two coordinates of $e_{ijk}^{\mu}$ and $e_{ikj}^{\mu'}$ projecting on the same face. Hence, $\mathrm{Im}(F)\cap \mathrm {Ker}(p)$ is generated by the vectors $F(v_i)$ and $F(w_i)$ where $$v_i=\sum_{\bar e_{ij}\textrm{oriented edge from }i}\bar e_{ij} \textrm{ and }$$
 $$w_i= \sum_{\bar e_{ji}\textrm{ oriented edge toward }i}\bar e_{ji}+\sum_{\bar e_{ijk}\textrm{ a face containing s}i} \bar e_{ijk}.$$ One verifies easily that these vectors are free, proving the lemma. 
\end{proof}

Since it follows from theorem \ref{thm:homologies}~(1) that $\bar{h}$ has an inverse after tensorization by $\bZ \left[ \frac12 \right]$ we conclude from lemma \ref{lem:dim} that $\mathcal{H} (J) \left[ \frac12 \right]$ and $H_1 (\partial M, L) \left[ \frac12 \right]$ are isomorphic. Now \ref{thm:homologies}~(2) implies that $\bar{h}$
and $\bar{g}$ are adjoint maps w.r.t. the forms $\omega$ on $H_1 (\partial M, L) \left[ \frac12 \right]$ and $\Omega$ on $\mathcal{H} (J) \left[ \frac12 \right]$. The corollary follows.

The second corollary is merely a dual statement: recall from \ref{ss:linearizationholonomy} that the map $R^2$ is induced by the map
$h^* : J^* \to C^1 ( \mathcal D , L^*)$ dual to $h$. Now the map $c' \mapsto \omega ( \cdot , c')$ induces a symplectic isomorphism between
$(H_1 ( \partial M , L' ) , \omega )$ and $(H^1 (\partial M , L^* ) , \mathrm{wp})$. It therefore follows from corollary \ref{corhom} 
that the symplectic form $\Omega^*$ on $\mathcal{H} (J^*)$ is four times the pullback of $\mathrm{wp}$ by the 
map $\mathcal{H} (J^*) \to H^1 (\partial M , L^* )$ induced by $h^*$. Remembering that $h^*$ induces the {\it square} of $R$ the statement of corollary \ref{corkab} follows.

\section{Homologies and symplectic forms}

In this section we first prove theorem \ref{thm:homologies} (in the closed case). We then explain how to deduce theorem \ref{thm:kabaya} from it and its corollary \ref{corkab}.

\subsection{Proof of theorem \ref{thm:homologies}} \label{S81} We first compute $g \circ h : C_1 (\mathcal D , L) \to C_1 (\mathcal D ' , L')$ using \eqref{g}.
We work in a fixed tetraedron and therefore forget about the $\mu$'s. We denote by $c_{ij}$ the edge of $\mathcal D$ corresponding to a (left) turn around the edge
$e_{ij}$ and we denote by $c_{ij}'$ its dual edge in $\mathcal D'$, see figure \ref{fig:celldecomposition}. The following computations are straightforward:
$$\begin{array}{l}
\Omega \left(h\left(c_{ij} \otimes \begin{pmatrix} n \\ m \end{pmatrix} \right) , h \left( c_{ik} \otimes \begin{pmatrix} n' \\ m' \end{pmatrix} \right)  \right) 
= 2 \left[  \begin{pmatrix} n \\ m \end{pmatrix},  \begin{pmatrix} n' \\ m' \end{pmatrix} \right],\\
\Omega \left(h\left(c_{ij} \otimes \begin{pmatrix} n \\ m \end{pmatrix} \right) , h \left( c_{jk} \otimes \begin{pmatrix} n' \\ m' \end{pmatrix} \right)  \right) 
= -{2}{} \left[  \begin{pmatrix} (n+2m)/3 \\ (2n+m)/3 \end{pmatrix},  \begin{pmatrix} n' \\ m' \end{pmatrix} \right],\\
\Omega \left(h\left(c_{ij} \otimes \begin{pmatrix} n \\ m \end{pmatrix} \right) , h \left( c_{ji} \otimes \begin{pmatrix} n' \\ m' \end{pmatrix} \right)  \right) 
= 0, \\
\Omega \left(h\left(c_{ij} \otimes \begin{pmatrix} n \\ m \end{pmatrix} \right) , h \left( c_{ki} \otimes \begin{pmatrix} n' \\ m' \end{pmatrix} \right)  \right) 
= {2}{} \left[  \begin{pmatrix} (n+2m)/3 \\ (2n+m)/3 \end{pmatrix},  \begin{pmatrix} n' \\ m' \end{pmatrix} \right],
\end{array}$$
and so on...
Since it follows from \eqref{g} that 
\begin{multline*}
\omega \left( c \otimes \begin{pmatrix} n' \\ m' \end{pmatrix} , g \circ h \left(c_{ij} \otimes \begin{pmatrix} n \\ m \end{pmatrix} \right) \right) \\ = \Omega \left(
h \left(  c_{ij} \otimes \begin{pmatrix} n \\ m \end{pmatrix} \right) , h \left(c \otimes \begin{pmatrix} n' \\ m' \end{pmatrix} \right) \right)
\end{multline*}
we conclude that the element $g \circ h \left(c_{ij} \otimes \begin{pmatrix} n \\ m \end{pmatrix} \right)$ in $C_1(\mathcal D',L')$ is:
\begin{multline*}
g \circ h \left(c_{ij} \otimes \begin{pmatrix} n \\ m \end{pmatrix} \right) \\ = 2 (c_{ik}' - c_{il}') \otimes   \begin{pmatrix} n \\ m \end{pmatrix}
+ 2 (c_{ki}' - c_{kj}' + c_{jl}' - c_{jk} ' + c_{lj}' - c_{li} ' ) \otimes  \begin{pmatrix} (n+2m)/3 \\ (2n+m)/3 \end{pmatrix}.
\end{multline*}

Consider now a cycle $c=\sum c_{ij}^\mu$. We compute:
\begin{multline*}
 g\circ h\left( c\otimes \begin{pmatrix} n \\ m \end{pmatrix}\right)=\left(2\sum c_{ik}' - c_{il}'\right)\otimes   \begin{pmatrix} n \\ m \end{pmatrix}\\+\left(2 \sum c_{ki}' - c_{kj}' + c_{jl}' - c_{jk} ' + c_{lj}' - c_{li}'\right)\otimes \begin{pmatrix} (n+2m)/3 \\ (2n+m)/3 \end{pmatrix}
\end{multline*}
Interestingly, we are now reduced to a problem in the homology of $\partial M$ and the lattice $L$ does not play any role here. Indeed, the first assertion in theorem \ref{thm:homologies} follows from the following lemma. The second assertion of theorem \ref{thm:homologies} then follows from \eqref{g}.
\begin{lemma}
 \begin{itemize}
  \item The path $\sum c_{ik}' - c_{il}'$ is homologous to $2c$ in $H^1(\partial M)$,
  \item The path $ \sum c_{ki}' - c_{kj}' + c_{jl}' - c_{jk} ' + c_{lj}' - c_{li}'$ vanishes in $H^1(\partial M)$.
 \end{itemize}
\end{lemma}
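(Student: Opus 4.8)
The plan is to prove both assertions by first making the two CW--structures $\mathcal D$ and $\mathcal D'$ on each torus link completely explicit, and then realizing each homology statement as a relation between $1$--chains that can be read off from the boundaries of two standard families of $2$--cells, the whole argument hinging on the hypothesis $\partial c=0$. On the link $S_i$ of a vertex $i$, each tetrahedron $T_\mu=(ijkl)$ through $i$ contributes a small triangle $t_\mu$ cutting off the corner at $i$, whose three vertices correspond to the edges $ij,ik,il$ and whose three sides correspond to the faces $ijk,ikl,ijl$. The edge $c_{ij}^\mu$ of $\mathcal D$ is the corner--cut joining the midpoints of the two sides $ijk$ and $ijl$ (the faces through $ij$); hence the vertices of $\mathcal D$ are the face--midpoints and its $2$--cells are of two kinds: a \emph{central triangle} inside each $t_\mu$, with $\partial(\text{central }t_\mu)=\pm(c_{ij}^\mu+c_{ik}^\mu+c_{il}^\mu)$, and a \emph{disk} around each edge of $K$, whose boundary is the loop $\sum_\mu c_{ij}^\mu$ of corner--cuts turning around that edge. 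Dually, the vertices of $\mathcal D'$ are the barycentre $b_\mu$ of each central triangle together with the link--vertices, and $c_{ij}'^\mu$ is the dual edge from $b_\mu$ to the link--vertex of $ij$, so that $c_{ik}'^\mu-c_{il}'^\mu$ is the path running from the $il$--vertex through $b_\mu$ to the $ik$--vertex. I would record these two boundary identities as the only homological input.

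For the first assertion I would reduce by linearity to a cycle $c$ supported in a single link $S_i$, so that the path $\sum_\mu(c_{ik}'^\mu-c_{il}'^\mu)$ also lives in $S_i$. Overlaying $\mathcal D$ and $\mathcal D'$ cuts each $t_\mu$ into elementary pieces bounded by segments of the corner--cuts and of the tripod joining $b_\mu$ to the three corners. The key step is to assemble, with the coefficients prescribed by $c$, the $2$--chain of these elementary pieces whose boundary is exactly $\sum_\mu(c_{ik}'^\mu-c_{il}'^\mu)-2c$; because $c$ is a cycle, the terms at the shared face--midpoints where consecutive corner--cuts of $c$ meet cancel, so the chain is closed and the identity holds in $H_1(\partial M)$. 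The factor $2$ must be tracked with care: it reflects that the single corner--cut $c_{ij}^\mu$ borders the two faces $ijk$ and $ijl$, which feed respectively into $c_{ik}'^\mu$ and $c_{il}'^\mu$.

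For the second assertion the essential observation is that the hexagonal term attached to an edge $c_{ij}^\mu$ of $c$ lands \emph{not} in $S_i$ but in the links $S_j,S_k,S_l$ of the other three vertices of $T_\mu$, two dual edges in each, both issuing from the barycentre of the corresponding triangle. Fixing one such link, say $S_k$, one checks that among the three corner--cuts $c_{ij}^\mu,c_{ik}^\mu,c_{il}^\mu$ of $t_\mu$ exactly one has its even completion with third vertex $k$, namely $c_{ij}^\mu$, and it alone contributes there, giving $\varepsilon(c_{ki}'^\mu-c_{kj}'^\mu)$ with $\varepsilon$ the coefficient of $c_{ij}^\mu$ in $c$ (the cuts $c_{ik}^\mu$ and $c_{il}^\mu$ feed $S_l$ and $S_j$ instead). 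Summing over $c$ and grouping the resulting dual edges around each barycentre $b_\mu$ and around each link--vertex of $S_k$, the cycle condition $\partial c=0$ forces the total to be a $\bZ$--combination of boundaries of central triangles and vertex--disks of $S_k$, hence null--homologous; the argument for $S_j$ and $S_l$ is identical.

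The main obstacle will be the orientation and sign bookkeeping: one must fix compatible orientations of the corner--cuts, of the dual tripods, and of the two families of $2$--cells so that the central--triangle and vertex--disk boundary relations carry the correct signs, and then verify that $\partial c=0$ produces precisely the claimed cancellations, including the elusive factor $2$ of the first part. An alternative that sidesteps the explicit $2$--chains is to test both $1$--cycles against all homology classes using the perfect intersection pairing $\iota$ between $\mathcal D$ and $\mathcal D'$, whereby each assertion reduces to a purely local count; but either route ultimately rests on the two boundary identities recorded at the outset together with $\partial c=0$, after which Theorem~\ref{thm:homologies}~(1), multiplication by $4$, follows as indicated in the text.
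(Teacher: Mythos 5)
Your proposal follows essentially the same route as the paper, which itself defers to Neumann's Lemma 4.3 and a careful inspection of figures: the first path consists of the two boundary components of a regular neighbourhood of $c$, both parallel to $c$, hence homologous to $2c$, and the second is the ``far from the cusp'' contribution, which you correctly distribute two dual edges at a time into the links $S_j$, $S_k$, $S_l$ and recognize, using $\partial c=0$, as a collection of boundaries. One small caution for the write-up: the bounding $2$-chains in the second part should be taken in the dual decomposition $\mathcal D'$ (or one can simply argue in $H_1(\partial M)$), since the central triangles and vertex-disks you invoke are $2$-cells of $\mathcal D$, whose boundaries are made of $c$-edges rather than the $c'$-edges appearing in your $1$-cycle.
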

This lemma is already proven by Neumann \cite[Lemma 4.3]{Neuman}. The proof is a careful inspection using figures \ref{fig:c->2c} and \ref{fig:far}. The first point is quite easy: the path $\sum c_{ik}' - c_{il}'$ is the boundary of a regular neighborhood of $c$. The second part is the ``far from the cusp'' contribution in Neumann's paper. We draw on figure \ref{fig:far} four tetrahedra sharing an edge (the edges are displayed in dotted lines). The blue path is the path $c$ in the upper link. The collection of green paths are the relative $ \sum c_{ki}' - c_{kj}' + c_{jl}' - c_{jk} ' + c_{lj}' - c_{li}'$ in the other links. It consists in a collection of boundaries.

\begin{figure}[ht]
\begin{center}
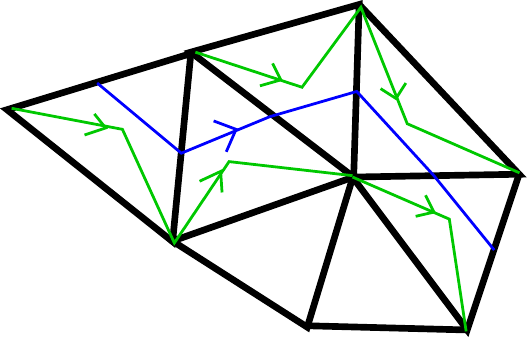
\caption{What happens inside the cusp: $c$ in blue and $g\circ h(c)$ in green.}\label{fig:c->2c}
\end{center}
\end{figure}

\begin{figure}[ht]
\begin{center}
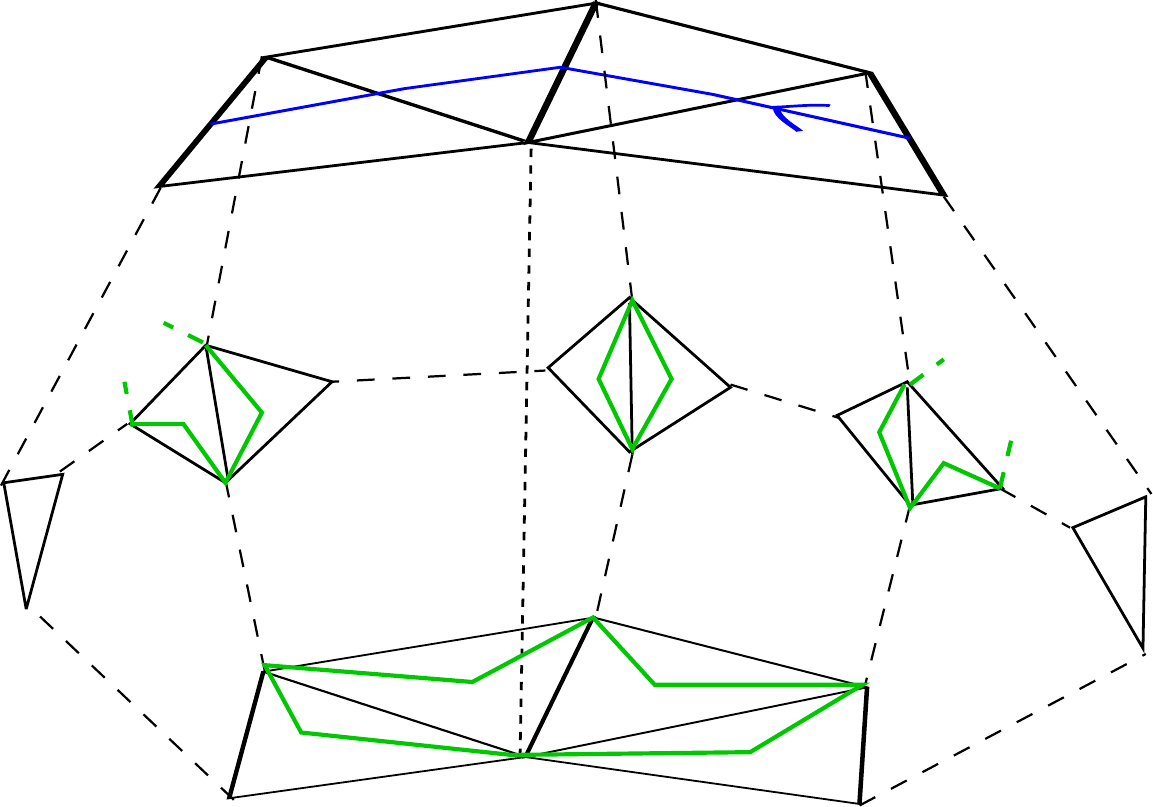
\caption{What happens far from the cusp}\label{fig:far}
\end{center}
\end{figure}

\subsection{Proof of theorem \ref{thm:kabaya} in the closed case}\label{ss:kabaya2->kabaya}

The theorem \ref{thm:kabaya} is now a corollary. Indeed, we have from lemma \ref{lemfacile} and corollary \ref{corkab}, if $z\in \mathcal H(J^*)$:
\begin{eqnarray*}
 3\delta(\beta(z))&=&\frac32 z\wedge_{\Omega^*}z\\
&=&\frac32 R(z)\wedge_{\mathrm{wp}} R(z) .
\end{eqnarray*}
It remains to compute the last quantity. Recall from the previous section the definition of $R(z)$: if a loop $c$ represents a class in homology, let $(\frac{1}{C^*},1,C)$ be the diagonal part of its holonomy. Then $R(z)$ applied to $c\otimes \begin{pmatrix} n\\m\end{pmatrix}$ equals $C^{m}(C^*)^n$. In other terms, denoting $[a_s]$ and $[b_s]$ the classes dual to $a_s$ and $b_s$, we have (see \S \ref{ss:omegatori}): 
$$R(z)=[a_s]\otimes \begin{pmatrix}A^*_s\\A_s\end{pmatrix}+[b_s]\otimes \begin{pmatrix}B_s^*\\B_s\end{pmatrix}.$$ Recall from \S \ref{ss:omegatori} that the form $\mathrm{wp}$ is the coupling of the cup product and the scalar product $\langle;\rangle$ on $\Z^2$.
Hence we conclude by:
\begin{eqnarray*}
3\delta(\beta(z))
&=&\sum_s 3\langle\begin{pmatrix}A_s^*,A_s\end{pmatrix},\begin{pmatrix}B_s^*,B_s\end{pmatrix}\rangle\\
&=&\sum_s 2A_s\wedge B_s+2A_s^*\wedge B_s^*+A_s^*\wedge B_s+A_s\wedge B_s^*.
\end{eqnarray*}

\section{Extension to the general case} We consider now the case of a  complex $K$ with boundary and explain how the preceeding proof of theorem \ref{thm:kabaya} shall be adapted to deal with it. Recall that the boundary of $K-K^{(0)}$ decomposes as the union of a triangulated surface $\Sigma$ and the links. The latter are further decomposed as tori links $S_s$ and annuli links $L_r$. We proceed as in the closed case and indicate the
modifications to be done. For simplicity we suppose that $k = \bC$. 

\subsection{} We denote by $C_1^{\rm or}+C_2$ the $\Z$-module generated by {\it internal} (oriented) edges and faces.
A parabolic decoration of $K$ gives a parabolic decoration of $\Sigma$, i.e. an element $z_\Sigma\in k^{\times}\otimes_\Z (J^2_{\Sigma})^*\left[ \frac12 \right]$, whose interpretation is that one may glue the decorated surface $\Sigma$ to the decorated complex fulfilling the consistency relations. More precisely, if $e_{\alpha}$ is a basis vector of $J^2_\Sigma$, one defines the $e_\alpha^*$ component of $z_\Sigma$ by:
$$z_\alpha^\Sigma \prod_\nu z_\alpha^\nu=1,$$
where the product is over all the $e_\alpha^\nu$ identified with $e_\alpha$. As usual we will rather consider the corresponding linear map:
$$h_{\Sigma} : J_{\Sigma}^2 \rightarrow J^2 ; e_{\alpha} \mapsto -\sum_{\nu} e_{\alpha}^{\nu}$$
as well as the dual map $h_{\Sigma}^* : (J^2)^* \to (J_{\Sigma}^2 )^*$. Note that if $e_{\alpha}^{\nu}$ in $J^2$ corresponds to an {\it internal} edge or face 
then $h^* ((e_{\alpha}^{\nu})^*) = 0$ whereas if it correspond to a boundary element $e_{\alpha} \in J_{\Sigma}^2$ we have:  $h^* ((e_{\alpha}^{\nu})^*) = - e_{\alpha}^*$. In particular one easily check that the following diagram is commutative:
\begin{equation} \label{cd}
\begin{CD}
J^2 @>p>> (J^2)^* \\
@AhAA   @VVh^*V \\
J_{\Sigma}^2 @>p_{\Sigma}>> (J_{\Sigma}^2)^* 
\end{CD}
\end{equation}
Recall that $J_{\Sigma} = J_{\Sigma}^2 / \mathrm{Ker} (p_{\Sigma})$.

The cell decomposition $\mathcal D$ is now defined for every cusp, each of which being either a torus or an annulus. In the latter case we may consider cycles relative
to the boundary. We denote by $Z_1^{\mathrm{rel}}(\mathcal D,L)$, resp. $Z_1^{\mathrm{rel}}(\mathcal D',L')$, the subspace of relative cycles in 
$C_1 (\mathcal{D} , L)$, resp. $C_1 (\mathcal{D}' , L')$. It is the orthogonal of $B_1(\mathcal D',L')$, resp. $B_1(\mathcal D,L)$, w.r.t. to the form $\omega$ defined as above, see \S \ref{ss:omegatori}. 

\subsection{} We now set 
$$J_{\partial M}^2 = J_{\Sigma}^2 \oplus C_1 (\mathcal D , L), \quad (J_{\partial M}^2)' = J_{\Sigma}^2 \oplus C_1 (\mathcal D' , L')$$
and let 
$$\Omega^2_{\partial M} : J_{\partial M}^2 \times (J_{\partial M}^2)' \to \bZ$$
be the bilinear coupling obtained as the orthogonal sum of $\Omega_{\Sigma}^2$ and $\omega$. As above it corresponds to these data
the map $p_{\partial M} : J_{\partial M}^2 \rightarrow ((J_{\partial M}^2)')^*$, $p_{\partial M} ( c)= \Omega^2_{\partial M} ( c, \cdot )$, as well as the spaces
$$J_{\partial M}  = J_{\partial M}^2 / \mathrm{Ker} (p_{\partial M}) = J_{\Sigma} \oplus C_1 (\mathcal D , L)$$
and 
$$(J_{\partial M}')^* = \mathrm{Im} (p_{\partial M}) = J_{\Sigma}^* \oplus C_1 (\mathcal D' , L').$$ 
The bilinear coupling induces a canonical perfect coupling
$$\Omega_{\partial M} : J_{\partial M}  \times J_{\partial M}' \to \bZ$$
which identifies $J_{\partial M}^*$ with $J_{\partial M}'$.

\subsection{} As in the closed case (see \S \ref{ss:linearizationholonomy}) the linearization of the holonomy yields an extension of $h_{\Sigma}$ to a map 
$h : J_{\partial M}^2 \to J^2$. We then have the following diagram:
$$
\begin{CD}
C_1^{\rm or}+C_2 @>F>> J^2 @>p>> (J^2)^* @>F^*>> C_1^{\rm or}+C_2 \\
@. @AhAA @Vh^*VV @. \\
@. J_{\partial M}^2 @. (J_{\partial M}^2)^* @.
\end{CD}
$$
Now it follows from \eqref{cd} that the image of $h^* \circ p$ is contained in $J_{\partial M}^*$. Identifying it with $J_{\partial M}'$ using $\Omega_{\partial M}$
we get a map $g : J^2 \to  J_{\partial M}'$. As in the closed case, for any $c \in J^2_{\partial M}$ and $e \in J^2$, we have:\footnote{Here we abusively use the same notation for $c$ and its image in $J_{\partial M}$.}
\begin{equation} \label{gomega}
\Omega_{\partial M} (c , g(e)) = \Omega^2 (e , h( c)).
\end{equation}
We moreover have the following inclusions:
\begin{itemize}
 \item $h(J_{\Sigma}^2 \oplus Z_1^{\rm rel}(\mathcal D,L))\subset \mathrm{Ker}(F^*\circ p)$,
 \item $h(J_{\Sigma}^2 \oplus B_1(\mathcal D,L))\subset \mathrm{Im}(F)+{\rm Ker}(p )$.
\end{itemize}
Denoting
$$\mathcal{H}_{\partial M} = (J_{\Sigma}^2 \oplus Z_1^{\rm rel}(\mathcal D,L))/h^{-1} (\mathrm{Im}(F)+{\rm Ker} ( p))$$
and 
$$\mathcal{H}_{\partial M} ' = (J_{\Sigma} \oplus Z_1^{\rm rel}(\mathcal D',L'))/g (\mathrm{Im}(F)+{\rm Ker} ( p)),$$
we conclude that the maps $h$ and $g$ induce maps
$$\bar h : \mathcal{H}_{\partial M} \to \mathcal{H} (J) \quad \mbox{ and } \quad \bar g : \mathcal{H} (J) \to \mathcal{H}_{\partial M} ' .$$
It furthermore follows from \eqref{gomega} that $\Omega_{\partial M}$ induces a bilinear coupling
$$\bar \Omega_{\partial M} : \mathcal{H}_{\partial M} \times \mathcal{H}_{\partial M} ' \rightarrow \bZ.$$

\begin{lemma} \label{l94}
The bilinear coupling $\bar \Omega_{\partial M}$ is non-degenerate.
\end{lemma}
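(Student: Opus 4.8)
The plan is to deduce the non-degeneracy of $\bar\Omega_{\partial M}$ from the fact that the ambient coupling $\Omega_{\partial M}$ is already perfect, reducing everything to the identification of a pair of radicals. First I would record that $\Omega_{\partial M}:J_{\partial M}\times J_{\partial M}'\to\bZ$ is a perfect pairing: by construction it is the orthogonal sum of $\Omega_\Sigma$, which pairs $J_\Sigma$ with $J_\Sigma^*=\mathrm{Im}(p_\Sigma)$ perfectly, and of $\omega$, which pairs $C_1(\mathcal D,L)$ with $C_1(\mathcal D',L')$ perfectly since $\mathcal D$ and $\mathcal D'$ are dual cell decompositions. For a perfect pairing the coupling induced on a subquotient $P/Q$ against a subquotient $P'/Q'$ (with $\Omega_{\partial M}(Q,P')=\Omega_{\partial M}(P,Q')=0$) is non-degenerate exactly when $Q=P\cap (P')^{\perp}$ and $Q'=P'\cap P^{\perp}$. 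So the statement reduces to computing these two annihilators and matching them with the denominators in the definitions of $\mathcal H_{\partial M}$ and $\mathcal H_{\partial M}'$.

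On the purely boundary level these annihilators are immediate from the relations already recorded. Writing $P$ for the image of $J_\Sigma^2\oplus Z_1^{\mathrm{rel}}(\mathcal D,L)$ in $J_{\partial M}$ and $P'$ for the numerator $J_\Sigma\oplus Z_1^{\mathrm{rel}}(\mathcal D',L')$ of $\mathcal H_{\partial M}'$, the orthogonal sum structure splits $P^{\perp}$ and $(P')^{\perp}$ along $\Sigma$ and the links. On the $\Sigma$ summand $\Omega_\Sigma$ is perfect, so it contributes nothing to the radical. On the link summands I would use that $Z_1^{\mathrm{rel}}(\mathcal D,L)=B_1(\mathcal D',L')^{\perp_\omega}$ and $Z_1^{\mathrm{rel}}(\mathcal D',L')=B_1(\mathcal D,L)^{\perp_\omega}$; taking orthogonals once more, and using that $\omega$ is perfect, gives $(Z_1^{\mathrm{rel}}(\mathcal D,L))^{\perp}=B_1(\mathcal D',L')$, and since $B_1\subset Z_1^{\mathrm{rel}}$ the radical of $\omega$ restricted to the relative-cycle spaces is exactly the space of boundaries. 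Thus the intersection pairing descends to a non-degenerate pairing on $H_1^{\mathrm{rel}}(\mathcal D,L)\times H_1^{\mathrm{rel}}(\mathcal D',L')$ --- this is Poincar\'e--Lefschetz duality for the torus and annulus links, coupled with the Killing form on $L$.

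The remaining and genuinely delicate point is that the algebraic denominators appearing in the definitions of $\mathcal H_{\partial M}$ and $\mathcal H_{\partial M}'$ --- the image of $h^{-1}(\mathrm{Im}(F)+\mathrm{Ker}(p))$ and of $g(\mathrm{Im}(F)+\mathrm{Ker}(p))$ --- really coincide with the radicals $\mathrm{Ker}(p_\Sigma)\oplus B_1(\mathcal D,L)$ and $B_1(\mathcal D',L')$ just computed. One inclusion is exactly the two inclusions already stated, namely $h(J_\Sigma^2\oplus B_1)\subset\mathrm{Im}(F)+\mathrm{Ker}(p)$ together with its dual for $g$, which also re-proves that $\bar\Omega_{\partial M}$ is well defined; note in particular that all of $J_\Sigma^2$ is swallowed, so the $\Sigma$-part behaves exactly as in the unipotent case and drops out of the homology. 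The reverse inclusion --- that nothing else lies in the radical --- is the main obstacle. Here I would argue as in the closed case: relation \eqref{gomega} says that $h$ and $g$ descend to maps $\bar h:\mathcal H_{\partial M}\to\mathcal H(J)$ and $\bar g:\mathcal H(J)\to\mathcal H_{\partial M}'$ adjoint for $\bar\Omega_{\partial M}$ and $\Omega$, so any extra radical vector would be annihilated by $\bar h$ or missed by $\bar g$. Ruling this out amounts to the bounded-surface analog of Theorem \ref{thm:homologies}, i.e. that $\bar g\circ\bar h$ is, after inverting $2$, an isomorphism, which together with a dimension count (the analog of Lemma \ref{lem:dim}) forces $\bar h$ and $\bar g$ to be isomorphisms and hence $\bar\Omega_{\partial M}$ to be non-degenerate. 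This surface-homology computation, carried out from figures \ref{fig:c->2c} and \ref{fig:far} as in Neumann's work but now allowing boundary, is where the real work lies.
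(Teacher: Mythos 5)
Your overall strategy (reduce to identifying the denominators of the two subquotients with the radicals of the perfect ambient coupling $\Omega_{\partial M}$) is reasonable in principle, but the execution has two genuine problems. First, the assertion that ``all of $J_\Sigma^2$ is swallowed, so the $\Sigma$-part \ldots drops out of the homology'' is wrong. If $J_\Sigma^2$ were entirely contained in $h^{-1}(\mathrm{Im}(F)+\mathrm{Ker}(p))$, then $\mathcal H_{\partial M}$ would reduce to $H_1^{\mathrm{rel}}(\partial M\setminus\Sigma,L)$, of dimension $4\nu_t+2\nu_a$, which contradicts the dimension count $\dim\mathcal H(J)=4\nu_t+2\nu_a+\dim(J_\Sigma)$ that the paper needs to match with $\dim\mathcal H_{\partial M}$. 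What actually disappears into the denominator is only $\mathrm{Ker}(p_\Sigma)$ (plus the boundaries on the links): the quotient $J_\Sigma$ of $J_\Sigma^2$ survives as a subobject of $\mathcal H_{\partial M}$ and, dually, as a quotient of $\mathcal H_{\partial M}'$. Its self-pairing under $\Omega_\Sigma$ is half of the non-degeneracy statement, so discarding it loses an essential part of the lemma.

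Second, you defer the ``reverse inclusion'' --- that nothing beyond $\mathrm{Ker}(p_\Sigma)\oplus B_1$ lies in the radical --- to a bounded-surface analogue of Theorem \ref{thm:homologies} together with a dimension count. But in the paper that analogue is established \emph{after} this lemma and \emph{uses} it: the injectivity of $\bar h$ in the general case is deduced from the non-degeneracy of $\bar\Omega_{\partial M}$ combined with the computation that $\Omega^2(h(\cdot),h(\cdot))$ has trivial kernel. So as written your argument is circular relative to the paper's logic, and in any case it leaves the hardest step unproved. The paper's own proof avoids all of this: it exhibits two short exact sequences
$$0 \to J_{\Sigma} \to \mathcal{H}_{\partial M} \to H_1^{\rm rel}(\partial M\setminus\Sigma, L)\to 0, \qquad 0 \to H_1(\partial M\setminus\Sigma, L')\to \mathcal{H}_{\partial M}'\to J_{\Sigma}\to 0,$$
checks that they are in duality with respect to $\Omega_{\partial M}$, and observes that the induced couplings on the graded pieces are $\Omega_\Sigma$ on $J_\Sigma\times J_\Sigma$ and the intersection form coupled with $[\,,\,]$ on $H_1^{\rm rel}\times H_1$ (Poincar\'e--Lefschetz duality, as you correctly identify for the link part); both being non-degenerate, so is $\bar\Omega_{\partial M}$. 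You should restructure your argument along these lines, keeping your correct observation about $Z_1^{\rm rel}$ and $B_1$ being mutual annihilators under $\omega$, but reinstating $J_\Sigma$ as a surviving graded piece.
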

\begin{proof}
Denote by $\partial M\setminus \Sigma$ the union of the links (tori and annuli). The quotient $J_{\Sigma}$ of $J_{\Sigma}^2$ naturally identifies 
with the quotient of $\mathrm{Im} (F) + h (J_{\Sigma}^2)$ by $\mathrm{Im} (F) + \mathrm{Ker} (p )$. Note that the former identifies with the  image of the
$\bZ$-module generated by {\it all} (oriented) edges and faces of $K$ into $J^2$. 
We then have two short exact sequences
$$0 \to J_{\Sigma} \to \mathcal{H}_{\partial M} \to H_1 ^{\rm rel} (\partial M \setminus \Sigma , L) \to 0$$
and
$$0 \to H_1  (\partial M \setminus \Sigma , L') \to \mathcal{H}_{\partial M}' \to J_{\Sigma} \to 0.$$
These are in duality w.r.t. $\Omega_{\partial M}$. Moreover this duality yields $\Omega_{\Sigma}$ on the product $J_{\Sigma} \times J_{\Sigma}$ and 
the intersection form, coupled with $[,]$, on $H_1 ^{\rm rel} (\partial M \setminus \Sigma , L) \times H_1  (\partial M \setminus \Sigma , L)$. Since both are non-degenerate this proves the lemma.
\end{proof}

It now follows from \eqref{gomega} that $\Omega_{\partial M} ( \cdot , g \circ h (\cdot )) = \Omega^2 (h (\cdot ) , h (\cdot ))$. And computations similar to \S \ref{S81} show that the right-hand side has a trivial kernel on $\mathcal{H}_{\partial M}$. 
The coupling $\bar \Omega_{\partial M}$ being non-degenerate we conclude that $\bar h$ is injective. As in the closed case, we may furthermore compute the dimension of $\mathcal H(J)$. Let $\nu_t$ be the number of tori and $\nu_a$ be the number of annuli. Then, computing the Euler characteristic of the double of $K$ along $\Sigma$, the proof of lemma \ref{lem:dim} yields the following:
\begin{lemma}
 The dimension of $\mathcal H(J)$ is $4\nu_t+2\nu_a+\dim(J_\Sigma)$.
\end{lemma}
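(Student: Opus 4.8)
The plan is to re-run the dimension count in the proof of Lemma~\ref{lem:dim}, now in the presence of the boundary surface $\Sigma$. The only two places where genuinely new information is needed are the combinatorial counts of the internal edges and faces of $K$ (for which the vanishing of $\chi(M)$ used in the closed case is replaced by the vanishing of the Euler characteristic of the double $DM$ of $M$ along $\Sigma$) and the value of $\kappa := \dim(\mathrm{Ker}(p)\cap\mathrm{Im}(F))$.

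First I would isolate the linear algebra. Exactly as before, $\mathcal{H}(J) = \mathrm{Ker}(F^*\circ p)/(\mathrm{Im}(F)+\mathrm{Ker}(p))$, and the one substantive input is that $F^*\circ p$ and $p\circ F$ are transpose to one another (since $p^*=-p$ and $F^{**}=F$), whence
\[ \mathrm{rank}(F^*\circ p)=\mathrm{rank}(p\circ F)=\dim(C_1^{\rm or}+C_2)-\kappa, \]
the last equality because $F$ is injective with $\dim(\mathrm{Im}(F)\cap\mathrm{Ker}(p))=\kappa$. Writing $\epsilon$ (resp. $\varphi$) for the number of internal edges (resp. faces) of $K$, so that $\dim C_1^{\rm or}=2\epsilon$ and $\dim C_2=\varphi$, and using $\dim J^2=16N$ and $\dim\mathrm{Ker}(p)=8N$, the same bookkeeping as in Lemma~\ref{lem:dim} then gives
\[ \dim\mathcal{H}(J)=8N-4\epsilon-2\varphi+2\kappa. \]

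Next I would evaluate $\epsilon$, $\varphi$ and $\kappa$. The double $DK$ is a closed $3$-cycle with $2N$ tetrahedra all of whose links are tori: the torus links of $K$ double, while each pair of annulus links glued across $\Sigma$ becomes a torus. Hence $DM$ is a cusped manifold with only torus ends, $\chi(DM)=0$, and the identity ``edges $=$ tetrahedra'' of the closed case applies to $DK$; as the edges of $\Sigma$ are glued to their mirror images (counted once in $DK$) this reads $2\epsilon+\epsilon_\Sigma=2N$, where $\epsilon_\Sigma$ and $\varphi_\Sigma$ denote the numbers of edges and triangles of the triangulation of $\Sigma$. Counting face incidences gives likewise $2\varphi+\varphi_\Sigma=4N$. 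Substituting, the $8N$ cancels and $\dim\mathcal{H}(J)=2\epsilon_\Sigma+\varphi_\Sigma+2\kappa$. Finally $\kappa$ is computed as in Lemma~\ref{lem:dim}: $\mathrm{Ker}(p)\cap\mathrm{Im}(F)$ is spanned by the classes $F(v_i),F(w_i)$, and the sums $v_i,w_i$ over the edges and faces around a vertex $i$ lie in $\mathrm{Im}(F)$ exactly when all of these are internal, i.e. when the link of $i$ is closed. This holds for the torus vertices but not for the annulus vertices (whose links abut $\Sigma$), so $\kappa=2\nu_t$.

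It remains to recognise the boundary term. By the Fock--Goncharov description of $I_\Sigma$ recalled before Lemma~\ref{lem:dim}, $\dim J_\Sigma^2=|I_\Sigma|=2\epsilon_\Sigma+\varphi_\Sigma$ (two oriented edges per edge and one centre per triangle), while $\mathrm{Ker}(\Omega_\Sigma^2)$ is spanned by the two degenerate directions $v_i,w_i$ at each puncture of $\Sigma$; since the punctures of $\Sigma$ are precisely the annulus vertices, $\dim\mathrm{Ker}(\Omega_\Sigma^2)=2\nu_a$ and $\dim J_\Sigma=2\epsilon_\Sigma+\varphi_\Sigma-2\nu_a$. Plugging $\kappa=2\nu_t$ into the displayed formula yields $\dim\mathcal{H}(J)=2\epsilon_\Sigma+\varphi_\Sigma+4\nu_t=4\nu_t+2\nu_a+\dim J_\Sigma$, as desired. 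The main obstacle is not the arithmetic but the two geometric identifications it rests on --- that doubling along $\Sigma$ converts the annulus links into torus links (so that $DM$ has only torus ends and the edge count $2\epsilon+\epsilon_\Sigma=2N$ is available), and that the punctures of $\Sigma$ correspond bijectively to the annulus vertices (so that $\dim\mathrm{Ker}(\Omega_\Sigma^2)=2\nu_a$) --- both of which are features of Neumann's construction that must be checked carefully, together with the claim that the cells of $\Sigma$ are counted exactly once in $DK$.
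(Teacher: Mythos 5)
Your overall strategy is exactly the one the paper intends (its proof is the single sentence ``computing the Euler characteristic of the double of $K$ along $\Sigma$, the proof of lemma \ref{lem:dim} yields the following''), and the first two thirds of your argument are correct: the reduction $\dim\mathcal{H}(J)=8N-4\epsilon-2\varphi+2\kappa$ is sound linear algebra (and specializes correctly to $4l$ in the closed case), and the counts $2\epsilon+\epsilon_\Sigma=2N$, $2\varphi+\varphi_\Sigma=4N$ obtained from the double $DK$ (whose links are all tori, the annuli doubling to tori) are right, giving $\dim\mathcal{H}(J)=2\epsilon_\Sigma+\varphi_\Sigma+2\kappa$.

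The gap is in the last step, in the identification of $\dim J_\Sigma$. You assert that the punctures of $\Sigma$ are in bijection with the annulus vertices, so that $\dim\mathrm{Ker}(\Omega_\Sigma^2)=2\nu_a$. But the link $L_v$ of a vertex $v\in K_a^{(0)}$ is an annulus whose \emph{two} boundary circles both lie on $\Sigma$ (they are exactly the free faces of $K$ at $v$); near $v$ the surface $\Sigma$ is therefore a wedge of two cones, i.e. $v$ contributes \emph{two} ends to the punctured surface $\Sigma$. Each of these two sheets carries its own degenerate vectors $v_P,w_P$ of $\Omega_\Sigma^2$ (the verification that they lie in the radical is local and only involves the triangles of that sheet, and the two sheets share no edge or triangle of $\Sigma$, so the four vectors at $v$ are independent). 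Hence $\dim\mathrm{Ker}(\Omega_\Sigma^2)\geq 4\nu_a$ and $\dim J_\Sigma\leq 2\epsilon_\Sigma+\varphi_\Sigma-4\nu_a$, not $2\epsilon_\Sigma+\varphi_\Sigma-2\nu_a$. With this correction your own computation reads $\dim\mathcal{H}(J)=2\epsilon_\Sigma+\varphi_\Sigma+2\kappa=\dim J_\Sigma+4\nu_a+2\kappa$, so the stated formula would require $\kappa=2\nu_t-\nu_a$; yet your argument for $\kappa$ (which I find convincing) produces $2\nu_t$ independent classes $F(v_i),F(w_i)$ from the interior vertices alone, i.e. $\kappa\geq 2\nu_t$. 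The two inputs you feed into the final arithmetic, $\kappa=2\nu_t$ and $\dim\mathrm{Ker}(\Omega_\Sigma^2)=2\nu_a$, are therefore not simultaneously tenable, and the identity you need closes only when $\nu_a=0$. You correctly flagged the puncture count as the point requiring care, but the proof as written resolves it the wrong way; to complete the argument you must either locate $\nu_a$ extra relations among the generators of $\mathrm{Ker}(p)\cap\mathrm{Im}(F)$ (which the independence argument above rules out) or re-examine what $J_\Sigma$ is at the pinch points of $\Sigma$ --- as it stands the bookkeeping of the $2\nu_a$ boundary circles is off by $2\nu_a$ and the lemma is not established.
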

This is easily seen to be the same as both the dimensions of $\mathcal{H}_{\partial M}$ and $\mathcal{H}_{\partial M}'$, see the proof of lemma \ref{l94}. Over $\bC$ the maps $\bar h $ and $\bar g$ are therefore invertible and we conclude that the form $\Omega$ on $J$ induces a form $\bar \Omega$ on $\mathcal{H} (J)$ such that 
$$\bar \Omega_{\partial M} ( c, \bar g (e))  = \bar \Omega ( e , \bar h ( c)).$$
In particular $\bar \Omega$ is determined by $\Omega_{\partial M}$ and the invariant $\delta  (\beta (K))$ only depends on the boundary coordinates. This concludes the proof of theorem \ref{thm:kabaya}.

\section{Examples}\label{examples}

In this section we describe the complement of the figure eight knot obtained by gluing two tetrahedra.
Let $z_{ij}$ and $w_{ij}$ be the coordinates associated to the edge $ij$ of each of them.

\begin{figure}[ht]
\begin{center}
\input{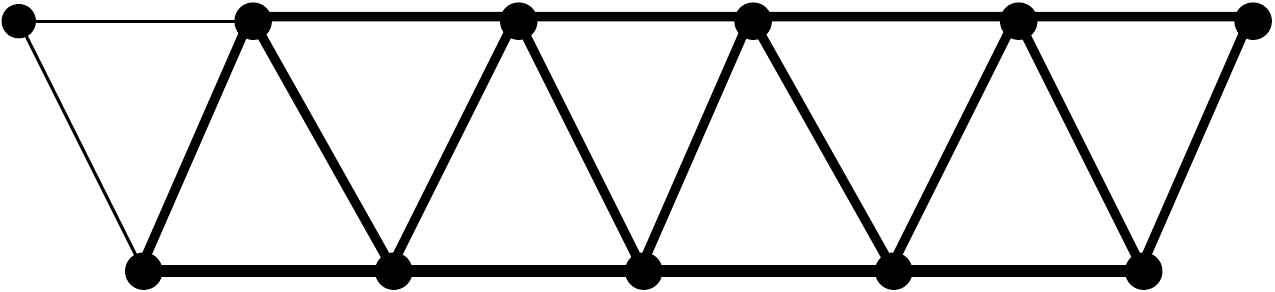_t}
\caption{The link at the boundary for the figure eight knot}\label{figure:holo}
\end{center}
\end{figure}

The edge equations are:
\begin{itemize}
\item $z_{12}w_{12}z_{13}w_{43}z_{43}w_{42}=1$
\item  $z_{21}w_{21}z_{31}w_{34}z_{34}w_{24}=1$
\item $z_{42}w_{32}z_{32}w_{31}z_{41}w_{41}=1$
\item $z_{24}w_{23}z_{23}w_{13}z_{14}w_{14}=1$
\end{itemize}

The face equations are:

\begin{itemize}
\item $z_{13}z_{43}z_{23}w_{14}w_{34}w_{24}=1$
\item  $z_{14}z_{24}z_{34}w_{21}w_{41}w_{31}=1$
\item $z_{12}z_{42}z_{32}w_{13}w_{43}w_{23}=1$
\item $z_{21}z_{31}z_{41}w_{12}w_{32}w_{42}=1$
\end{itemize}

And the holonomies are:

\begin{itemize}
\item $A= z_{41}\frac{1}{w_{32}} z_{31}\frac{1}{w_{24}}z_{23}\frac{1}{w_{14}}z_{13}\frac{1}{w_{41}}$
\item $B=  z_{43}\frac{1}{w_{41}}$
\item $A^*= \frac{1}{z_{14}}\frac{w_{14}w_{41}}{w_{32}} \frac{1}{z_{13}}
\frac{w_{13}w_{31}}{w_{24}}\frac{1}{z_{32}}\frac{w_{23}w_{32}}{w_{14}}\frac{1}{z_{31}}\frac{w_{31}w_{13}}{w_{42}}$
\item $B^*=  \frac{1}{z_{34}}\frac{w_{23}w_{32}}{w_{41}}$
\end{itemize}

If $A=B=A^*=B^*=1$ the solutions of the equations correspond to unipotent 
structures.  The complete hyperbolic 
structure on the complement of the figure eight knot determines a solution 
of the above equations.
In fact, in that case, if $\omega=\frac{1+i\sqrt{3}}{2}$ then
$$
z_{12}=z_{21}=z_{34}=z_{43}=w_{12}=w_{21}=w_{34}=w_{43}=\omega
$$
is a solution the equations as obtained in \cite{Thurston}. 

The spherical CR
structures with unipotent boundary holonomy were obtained in 
\cite{falbeleight} as the following solutions 
(up to conjugation):
$$
z_{12}=\bar z_{21}=z_{34}=\bar z_{43}=w_{12}=\bar w_{21}=w_{34}=\bar w_{43}=\omega,
$$
$$
z_{12} = \frac{5-i\sqrt{7}}{4},z_{21} = \frac{3-i\sqrt{7}}{8}, z_{34} =\frac{5+i\sqrt{7}}{4},z_{43}  = \frac{3+i\sqrt{7}}{8}
$$
$$
w_{12} = \frac{3-i\sqrt{7}}{8},w_{21} = \frac{5-i\sqrt{7}}{4},
w_{34}  = \frac{3+i\sqrt{7}}{8},w_{43} =\frac{5+i\sqrt{7}}{4}
$$
and
$$
z_{12} = \frac{-1+i\sqrt{7}}{4},z_{21} = \frac{3-i\sqrt{7}}{2}, 
z_{34} =\frac{-1-i\sqrt{7}}{4},z_{43}  = \frac{3+i\sqrt{7}}{2}
$$
$$
w_{12} = \frac{3+i\sqrt{7}}{2},w_{21} = \frac{-1-i\sqrt{7}}{4} 
w_{34}  = \frac{3-i\sqrt{7}}{2},w_{43} =\frac{-1+i\sqrt{7}}{4}
$$

The first solution above corresponds to a discrete representation 
of the fundamental group of the complement of the figure eight knot in 
$\mathrm{PU}(2,1)$ with faithful boundary holonomy.  Moreover, its action on complex hyperbolic space has limit set the full boundary sphere.  The other solutions 
have cyclic boundary holonomy.

 We will call
these solutions standard structures on the complement of the figure eight knot. Recently, P.-V. Koseleff
proved that they are the only solutions to the equations:

\begin{proposition} The only unipotent flag $\mathrm{SL}(3,\bC)$-structures
on the complement of the figure eight knot are the standard structures.
\end{proposition}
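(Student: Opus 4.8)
The plan is to reduce the statement to a finite elimination problem. First I would use the local relations to cut the number of unknowns down to eight. By Proposition \ref{parameters} each of the two tetrahedra is determined by its four vertex coordinates, and relations \eqref{Rel2}, \eqref{Rel3} and \eqref{Rel1} express every remaining edge- and face-coordinate as an explicit rational function of them: writing $(i,j,k,l)$ for the even permutation based at each vertex, one has $z_{ik}=1/(1-z_{ij})$, $z_{il}=1-1/z_{ij}$ and $z_{ijk}=-z_{il}z_{jl}z_{kl}$, and similarly for the $w$-tetrahedron. Thus the whole decoration is a rational function of the eight parameters $z_{12},z_{21},z_{34},z_{43},w_{12},w_{21},w_{34},w_{43}$, each constrained by genericity to lie in $\bC\setminus\{0,1\}$; note that the derived edge- and face-coordinates then automatically avoid $0$ and $1$, so this is the only genericity locus to track.

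Next I would substitute these expressions into the twelve constraints defining a unipotent structure: the four edge equations, the four face equations, and the four unipotency conditions $A=B=A^{*}=B^{*}=1$. Clearing denominators turns each into a polynomial identity in the eight unknowns, so the unipotent structures are exactly the points of the resulting affine variety lying off the locus $z_{\bullet},w_{\bullet}=0,1$. Before computing I would record the built-in redundancies: by \eqref{Rel3} the product of all twelve $z$'s (and of all twelve $w$'s) equals $(-1)^4=1$, so the product of the four edge equations, and likewise of the four face equations, reduces to the trivial identity $1=1$; hence at most three of each are independent. I would also exploit the evident $\bZ/2$ symmetry of the figure-eight complement interchanging the two tetrahedra (and, up to the relevant relabelling of vertices, the $z$- and $w$-coordinates), either to search first for symmetric solutions or to organise the elimination around symmetric and anti-symmetric combinations of the variables.

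The heart of the argument is then a Gr\"obner-basis / resultant computation showing that the ideal generated by these polynomials, saturated with respect to the genericity locus, is zero-dimensional, together with an enumeration of its finitely many complex points; one checks by direct substitution that each is one of the listed structures --- the complete hyperbolic structure $z_{ij}=w_{ij}=\omega$ and the CR structures with the stated values --- and that no further solution survives the constraint $z_{\bullet},w_{\bullet}\neq 0,1$. I expect the main obstacle to be purely computational: controlling the degree and size of the elimination ideal and its saturation tightly enough that the solution count is \emph{provably} exact, with no spurious component and none missed. This is precisely the verification carried out by Koseleff, and it is what forces the recourse to computer algebra; the conceptual content that the earlier sections supply is the reduction to eight variables and the explicit polynomial form of the edge, face and holonomy constraints.
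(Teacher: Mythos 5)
Your plan coincides with what the paper does: the paper offers no proof of this proposition beyond setting up exactly the system you describe (the four edge equations, four face equations and the unipotency conditions $A=B=A^{*}=B^{*}=1$ in the eight parameters $z_{12},z_{21},z_{34},z_{43},w_{12},w_{21},w_{34},w_{43}$, with all other coordinates eliminated via relations \eqref{Rel1}--\eqref{Rel3}) and citing Koseleff's computer-algebra verification that the standard structures are its only solutions off the locus $z_{\bullet},w_{\bullet}\in\{0,1\}$. Since in both cases the substantive content --- the provably exhaustive elimination --- is deferred to that same computation, your proposal is essentially the paper's (implicit) argument.
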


\section{Applications}\label{applications}

\subsection{Volumes of decorated tetrahedra complex}

A decorated closed tetrahedra complex $K$ provides us with an element $z \in \bC^{\times} \otimes_{\Z} J^*\left[ \frac12 \right]$ which satisfies
the face and edge conditions as well as the non-linear equations
$$z_{ik} (T_{\nu}) = \frac{1}{1-z_{ij} (T_{\nu})}.$$
Let $X = \bC^{\times} \otimes_{\Z} J^*\left[ \frac12 \right]$; this is a
complex variety. 

Following \S \ref{par:vol} we define the {\it volume} of $K$ as:
\begin{equation} \label{eq:defvol}
\mathrm{Vol} (K) = \frac{1}{4} D (\beta (K)).
\end{equation}
This defines a real analytic function on $X$:
$$\mathrm{Vol} : X \rightarrow \bC.$$

Let $\mathcal{F} (X)^{\times}$ be the group of invertible real analytic functions on $X$ and 
$\Omega^1 (X)$ the space of real analytic $1$-form on $X$. The holonomy
elements $A_s$ , $A_s^*$ and $B_s$, $B_s^*$ of theorem 
\ref{thm:kabaya} define elements 
in $\mathcal{F}(X)^{\times}$. Now there is a map $\mathrm{Im} ( d \log \wedge_{\Z} \log) : \mathcal{F}(X)^{\times} \wedge_{\Z} \mathcal{F}(X)^{\times} \rightarrow \Omega^1 (X)$ defined by:
$$\mathrm{Im} ( d \log \wedge_{\Z} \log) (f \wedge_{\Z} g )= \mathrm{Im} \left( \log |g| \cdot d(\log f) - \log |f| \cdot d(\log g) \right).$$

Following Neumann-Zagier \cite{NeumanZagier} we want to compute the variation of $\mathrm{Vol} (K)$ as we vary $z \in X$. 
Equivalently we compute $d\mathrm{Vol} \in \Omega^1 (X)$ in the following:
\begin{proposition} \label{prop:last}
We have:
\begin{multline*}
d\mathrm{Vol} \\ = \frac{1}{12} \sum_s \mathrm{Im} ( d \log \wedge_{\Z} \log) (2A_s \wedge_{\Z} B_s + 2A_s^* \wedge_{\Z} B_s^*+A_s^*\wedge_\Z B_s +A_s\wedge_\Z B_s^*).
\end{multline*}
\end{proposition}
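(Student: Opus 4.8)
The plan is to reduce the statement to a single pointwise identity for the Bloch--Wigner dilogarithm and then to insert the formula for $\delta(\beta(K))$ furnished by Theorem~\ref{thm:kabaya}(2). Recall that $\mathrm{Vol}(K)=\frac14 D(\beta(K))$ and that, by construction, $\beta(K)=\sum_\nu\beta(T_\nu)$ is a $\Z$-linear combination $\sum_j n_j[f_j]$ of classes $[f_j]$, where each $f_j$ is one of the edge cross-ratios $z_{ij}(T_\nu)$ viewed as an element of $\mathcal{F}(X)^{\times}$. Since $D$ descends to $\mathcal{P}(\bC)$ and is evaluated by linear extension on representatives, differentiating $\mathrm{Vol}$ amounts to differentiating each $D\circ f_j$ separately.

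First I would establish the one-variable identity
\[
d\bigl(D\circ f\bigr)=\mathrm{Im}(d\log\wedge_{\Z}\log)\bigl(f\wedge_{\Z}(1-f)\bigr),\qquad f,\,1-f\in\mathcal{F}(X)^{\times}.
\]
This comes straight from the definition of $D$: writing $d\ln_2(f)=\log(1-f)\,d\log f$ and separating real and imaginary parts of $\log$ and of $d\log$, the terms $\arg(1-f)\,d\log|f|$ cancel and one is left with $\log|f|\,d\arg(1-f)-\log|1-f|\,d\arg f$, which is exactly the right-hand side evaluated on $f\wedge_{\Z}(1-f)$. The delicate point here is to fix the overall sign consistently with the conventions chosen for $\arg$ and for $\ln_2$; one also checks, directly from its defining formula, that $\mathrm{Im}(d\log\wedge_{\Z}\log)$ is antisymmetric and biadditive, hence well defined on $\mathcal{F}(X)^{\times}\wedge_{\Z}\mathcal{F}(X)^{\times}$ and insensitive to the passage to $\left[\frac12\right]$-coefficients.

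Next I would sum over the generators. Using $\delta([f_j])=f_j\wedge_{\Z}(1-f_j)$ and the $\Z$-linearity of both $\delta$ and $\mathrm{Im}(d\log\wedge_{\Z}\log)$, the identity above yields
\[
d\mathrm{Vol}=\tfrac14\sum_j n_j\,d(D\circ f_j)=\tfrac14\,\mathrm{Im}(d\log\wedge_{\Z}\log)\bigl(\delta(\beta(K))\bigr).
\]
Finally I would invoke Theorem~\ref{thm:kabaya}(2), which gives
\[
3\,\delta(\beta(K))=\sum_s\bigl(2A_s\wedge_{\Z}B_s+2A_s^*\wedge_{\Z}B_s^*+A_s^*\wedge_{\Z}B_s+A_s\wedge_{\Z}B_s^*\bigr).
\]
Since $\Omega^1(X)$ is an $\bR$-vector space we may divide by $3$ freely; substituting $\delta(\beta(K))=\frac13\sum_s(\cdots)$ and applying $\mathrm{Im}(d\log\wedge_{\Z}\log)$ turns the prefactor $\frac14$ into $\frac14\cdot\frac13=\frac1{12}$, which is precisely the claimed formula.

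The main obstacle is the first step. The structural reduction and the substitution of Theorem~\ref{thm:kabaya} are formal, but the dilogarithm differential identity must be produced with the correct overall sign, and one must make sure that the holonomy functions $A_s,B_s,A_s^*,B_s^*$---which a priori are only defined up to $2$-torsion because of the $\left[\frac12\right]$-localization---are genuine elements of $\mathcal{F}(X)^{\times}$ on which $\mathrm{Im}(d\log\wedge_{\Z}\log)$ takes an unambiguous value, so that the final equality holds as an identity of real-analytic $1$-forms. Once this bookkeeping is in place the proposition follows immediately.
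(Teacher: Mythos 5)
Your proposal follows essentially the same route as the paper's own proof: establish the pointwise identity $dD=\mathrm{Im}(d\log\wedge_{\Z}\log)\bigl(z\wedge_{\Z}(1-z)\bigr)$ by elementary differentiation of the Bloch--Wigner function, then combine it by $\Z$-linearity with $\mathrm{Vol}(K)=\tfrac14 D(\beta(K))$ and Theorem \ref{thm:kabaya}(2) to produce the factor $\tfrac1{12}$. (As you suspected, the sign is the delicate point: with the paper's stated conventions one actually gets $dD=\log|z|\,d\arg(1-z)-\log|1-z|\,d\arg z=-\,\mathrm{Im}(d\log\wedge_{\Z}\log)\bigl(z\wedge_{\Z}(1-z)\bigr)$, a discrepancy present identically in the paper's own computation and immaterial to the structure of the argument.)
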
 
\begin{proof} The derivatives of $D(z)$ are elementary functions:
\begin{equation} \label{eq:derD}
\frac{\partial D}{\partial z} = \frac{i}{2} \left( \frac{\log |1-z|}{z} + \frac{\log |z|}{1-z} \right), \quad \frac{\partial D}{\partial \overline{z}} = -\frac{i}{2} \left( \frac{\log |1-z|}{\overline{z}} + \frac{\log |z|}{1-\overline{z}} \right).
\end{equation}
Assume that the parameter $z \in \bC^*$ is varying in dependence on a single variable $t$. Then:
\begin{equation*}
\begin{split}
\frac{d}{dt} D(z_t)  & = \frac{i}{2} \left[ \left( \frac{\log |1-z|}{z} + \frac{\log |z|}{1-z} \right) \frac{dz}{dt}  - \left( \frac{\log |1-z|}{\overline{z}} + \frac{\log |z|}{1-\overline{z}} \right) \frac{d\overline{z}}{dt} \right] \\
& =  \mathrm{Im} \left( \left( \frac{d}{dt} \log (z) \right)  \log |1-z| - \left( \frac{d}{dt} \log (1-z)\right) \log |z|  \right).  
\end{split}
\end{equation*}
In otherwords: $dD =  \mathrm{Im} ( d \log \wedge_{\Z} \log) (z \wedge_{\bZ} (1-z))$. And proposition
\ref{prop:last} follows from theorem \ref{thm:kabaya} and \eqref{eq:defvol}.
\end{proof}

\medskip
\noindent
{\it Remark.} Proposition \ref{prop:last} implies in particular that the variation of the volume only depends
on the contribution of the boundary. Specializing to the hyperbolic case we recover the result of Neumann-Zagier \cite{NeumanZagier}, see also Bonahon \cite[Theorem 3]{Bonahon2}.

\subsection{Weil-Petersson forms} Let $k$ be an arbitrary field. The Milnor group $K_2 (k)$
is the cokernel of $\delta : \mathcal{P} (k) \rightarrow k^{\times} \wedge_{\Z} k^{\times}$. 

Let $X_{\Sigma} = \bC^{\times} \otimes_{\Z} J^*_{\Sigma}\left[ \frac12 \right]$; it is a complex manifold. As above
we may consider the field $\mathcal{F}(X_{\Sigma})^{\times}$; we let $\Omega^2_{\rm hol} (X_{\Sigma})$
denote the space of holomorphic $2$-forms on $X_{\Sigma}$. The element $z_{\Sigma}$
defines an element in $\mathcal{F}(X_{\Sigma})^{\times}$. We still denote the projection of
$z_{\Sigma} \wedge_{\Omega_{\Sigma}^*} z_{\Sigma}$ into $K_2 ( \mathcal{F}(X_{\Sigma})^{\times})$. 

Now, since $d\log \wedge_{\Z} d\log ((1-f) 
\wedge_{\Z} f) =0$, there is a group homomorphism:
$$d \log \wedge_{\Z} d \log : K_2 ( \mathcal{F}(X)^{\times}_{\Sigma}) \rightarrow \Omega^2 (X_{\Sigma}), \quad f \wedge_{\Z} g \mapsto 
d \log (f) \wedge_{\Z} d \log (g).$$

In the hyperbolic case and when the decoration is unipotent, Fock and Goncharov \cite{FG} prove
that
$$\frac12 d \log z_{\Sigma} \wedge_{\Omega_{\Sigma}^*} d \log z_{\Sigma} = d \log \wedge_{\Z} d \log (W(\Sigma ))$$
is the Weil-Petersson form. Although expected, the analogous statement in the $\SL(3)$-case 
seems to be open. In any case theorem \ref{thm:kabaya} implies that this form vanishes, equivalently
the ``Weil-Petersson forms'' corresponding to the different components of $\Sigma$ add up to zero.

\bibliography{bibli}

\begin{thebibliography}{10}

\bibitem{BFG-rigidity}
Nicolas Bergeron, Elisha Falbel, and Antonin Guilloux.
\newblock Local rigidity for {$\SL(3 , \bC)$} representations of $3$-manifolds
  groups.
\newblock 2012.

\bibitem{Bonahon}
Francis Bonahon.
\newblock Shearing hyperbolic surfaces, bending pleated surfaces and
  {T}hurston's symplectic form.
\newblock {\em Ann. Fac. Sci. Toulouse Math. (6)}, 5(2):233--297, 1996.

\bibitem{Bonahon2}
Francis Bonahon.
\newblock A {S}chl\"afli-type formula for convex cores of hyperbolic
  {$3$}-manifolds.
\newblock {\em J. Differential Geom.}, 50(1):25--58, 1998.

\bibitem{falbel}
Elisha Falbel.
\newblock A volume function for {C}{R} tetrahedra.
\newblock preprint http://arxiv.org/abs/0907.2305.

\bibitem{falbeleight}
Elisha Falbel.
\newblock A spherical {C}{R} structure on the complement of the figure eight
  knot with discrete holonomy.
\newblock {\em Journal of Differential Geometry}, 79:69--110, 2008.

\bibitem{FalbelWang}
Elisha Falbel and Qingxue Wang.
\newblock A combinatorial invariant for spherical {C}{R} structures.
\newblock preprint http://arxiv.org/abs/1007.5228.

\bibitem{FG}
Vladimir Fock and Alexander Goncharov.
\newblock Moduli spaces of local systems and higher {T}eichm\"uller theory.
\newblock {\em Publ. Math. Inst. Hautes \'Etudes Sci.}, (103):1--211, 2006.

\bibitem{FG2}
Vladimir~V. Fock and Alexander~B. Goncharov.
\newblock Cluster ensembles, quantization and the dilogarithm.
\newblock {\em Ann. Sci. \'Ec. Norm. Sup\'er. (4)}, 42(6):865--930, 2009.

\bibitem{Genzmer}
Juliette Genzmer.
\newblock Sur les triangulations des structures {C}{R}-sph\'eriques.
\newblock Thesis, Paris 6 June 2010.

\bibitem{Kabaya}
Yuichi Kabaya.
\newblock Pre-{B}loch invariants of 3-manifolds with boundary.
\newblock {\em Topology Appl.}, 154(14):2656--2671, 2007.

\bibitem{Neuman}
Walter~D. Neumann.
\newblock Combinatorics of triangulations and the {C}hern-{S}imons invariant
  for hyperbolic {$3$}-manifolds.
\newblock In {\em Topology '90 ({C}olumbus, {OH}, 1990)}, volume~1 of {\em Ohio
  State Univ. Math. Res. Inst. Publ.}, pages 243--271. de Gruyter, Berlin,
  1992.

\bibitem{NeumannGT}
Walter~D. Neumann.
\newblock Extended {B}loch group and the {C}heeger-{C}hern-{S}imons class.
\newblock {\em Geom. Topol.}, 8:413--474 (electronic), 2004.

\bibitem{NeumanZagier}
Walter~D. Neumann and Don Zagier.
\newblock Volumes of hyperbolic three-manifolds.
\newblock {\em Topology}, 24(3):307--332, 1985.

\bibitem{suslin}
A.~A. Suslin.
\newblock Homology of {${\rm GL}_{n}$}, characteristic classes and {M}ilnor
  {$K$}-theory.
\newblock {\em Trudy Mat. Inst. Steklov.}, 165:188--204, 1984.
\newblock Algebraic geometry and its applications.

\bibitem{Thurston}
W.~Thurston.
\newblock The geometry and topology of 3-manifolds.
\newblock 1979.
\newblock Lecture Notes.

\bibitem{Zagier}
Don Zagier.
\newblock The dilogarithm function.
\newblock In {\em Frontiers in number theory, physics, and geometry. {II}},
  pages 3--65. Springer, Berlin, 2007.

\bibitem{Zickert}
Christian~K. Zickert.
\newblock The extended {B}loch group and algebraic {$K$}-theory.
\newblock preprint http://arxiv.org/abs/0910.4005v3.

\end{thebibliography}

\bibliographystyle{plain}

\end{document}